\newcommand{\bm}[1]{\boldsymbol{#1}}
\newcommand{\va}{{\mathbf{a}}}
\newcommand{\vb}{{\mathbf{b}}}
\newcommand{\vd}{{\mathbf{d}}}
\newcommand{\ve}{{\mathbf{e}}}
\newcommand{\vu}{{\mathbf{u}}}
\newcommand{\vv}{{\mathbf{v}}}
\newcommand{\vw}{{\mathbf{w}}}
\newcommand{\vx}{{\mathbf{x}}}
\newcommand{\vy}{{\mathbf{y}}}
\newcommand{\vz}{{\mathbf{z}}}
\newcommand{\vA}{{\mathbf{A}}}
\newcommand{\vH}{{\mathbf{H}}}
\newcommand{\vI}{{\mathbf{I}}}
\newcommand{\vQ}{{\mathbf{Q}}}
\newcommand{\vW}{{\mathbf{W}}}
\newcommand{\vlam}{{\bm{\lambda}}}
\newcommand{\vxi}{{\bm{\xi}}}
\newcommand{\vmu}{{\bm{\mu}}}
\newcommand{\vSigma}{{\bm{\Sigma}}}
\newcommand{\cD}{{\mathcal{D}}}
\newcommand{\cL}{{\mathcal{L}}}
\newcommand{\cN}{{\mathcal{N}}}
\newcommand{\vareps}{\varepsilon}
\newcommand{\RR}{\mathbb{R}} 
\newcommand{\vzero}{\mathbf{0}} 
\newcommand{\vone}{{\mathbf{1}}} 
\newcommand{\dist}{\mathrm{dist}}    
\newcommand{\prox}{{\mathbf{prox}}} 
\newcommand{\dom}{{\mathrm{dom}}} 
\newcommand{\st}{\mbox{ s.t. }}
\DeclareMathOperator*{\argmin}{arg\,min} 
\DeclareMathOperator*{\argmax}{arg\,max} 
\newcommand{\bc}{\begin{center}}
\newcommand{\ec}{\end{center}}
\newcommand{\bdm}{\begin{displaymath}}
\newcommand{\edm}{\end{displaymath}}
\newcommand{\beq}{\begin{equation}}
\newcommand{\eeq}{\end{equation}}
\newcommand{\bfl}{\begin{flushleft}}
\newcommand{\efl}{\end{flushleft}}
\newcommand{\bt}{\begin{tabbing}}
\newcommand{\et}{\end{tabbing}}
\newcommand{\beqn}{\begin{eqnarray}}
\newcommand{\eeqn}{\end{eqnarray}}
\newcommand{\beqs}{\begin{align*}} 
\newcommand{\eeqs}{\end{align*}}  
\newtheorem{assumption}{Assumption}
\newtheorem{remark}{Remark}
\newtheorem{setting}{Setting}
\numberwithin{equation}{section}
\begin{document}

\title{Inexact accelerated proximal gradient method with line search and reduced complexity for affine-constrained and bilinear saddle-point structured convex problems}

\author{Qihang Lin\thanks{\url{qihang-lin@uiowa.edu}, Department of Business Analytics, University of Iowa, Iowa City, IA 52242} \and Yangyang Xu\thanks{\url{xuy21@rpi.edu}, Department of Mathematical Sciences, Rensselaer Polytechnic Institute, Troy, NY 12180}}


\date{\today}

\maketitle

\begin{abstract}
The goal of this paper is to reduce the total complexity of gradient-based methods for two classes of problems: affine-constrained composite convex optimization and bilinear saddle-point structured non-smooth convex optimization. Our technique is based on a double-loop inexact accelerated proximal gradient (APG) method for minimizing the summation of a non-smooth but proximable convex function and two smooth convex functions with different smoothness constants and computational costs. Compared to the standard APG method, the inexact APG method can reduce the total computation cost if  one smooth component has higher computational cost but a smaller smoothness constant than the other. With this property, the inexact APG method can be applied to approximately solve the subproblems of a proximal augmented Lagrangian method for affine-constrained composite convex optimization and the smooth approximation for bilinear saddle-point structured non-smooth convex optimization, where the smooth function with a smaller smoothness constant has significantly higher computational cost. Thus it can reduce total complexity for finding an approximately optimal/stationary solution. This technique is similar to the gradient sliding technique in literature \cite{lan2016gradient,lan2016accelerated}. The difference is that our inexact APG method can efficiently stop the inner loop by using a computable condition based on a measure of stationarity violation,  while the gradient sliding methods need to pre-specify the number of iterations for the inner loop. Numerical experiments demonstrate significantly higher efficiency of our methods over an optimal primal-dual first-order method in \cite{hamedani2021primal} and the gradient sliding methods.

\vspace{0.2cm}
	
\noindent {\bf Keywords:} first-order method, convex optimization, constrained optimization, saddle-point non-smooth optimization 
	

\end{abstract}

\section{Introduction}\label{sec:intro}
In this paper, we consider two classes of convex optimization problems: affine-constrained composite convex optimization:
\vspace{-0.2cm}
\begin{equation}\label{eq:aff-prob-intro}
	\min_{\vx\in\RR^n} ~f(\vx) + r(\vx), \st \vA\vx = \vb,
	\vspace{-0.2cm}
\end{equation}
and bilinear saddle-point structured non-smooth optimization:
\vspace{-0.2cm}
\begin{equation}\label{eq:sp-prob-intro}
	\min_{\vx\in\RR^n} \left\{f(\vx) + r(\vx) + \max_{\vy\in \RR^m} \big[\langle \vy, \vA\vx\rangle - \phi(\vy) \big] \right\}.
	\vspace{-0.2cm}
\end{equation}
In both problems, $f$ is smooth convex while $r$ and $\phi$ are closed convex and admit easy proximal mappings. We further assume that $\phi$ has a bounded domain, $\nabla f$ is $L_f$-Lipschitz continuous, and $f$ is $\mu$-strongly convex with $\mu\geqslant 0$. For simplicity of introduction, we only consider equality constraints in \eqref{eq:aff-prob-intro} in this section, but we will consider both equality and inequality constraints in the main body of the paper as shown in \eqref{eq:aff-prob}.

Different from most of the existing works that target at an $\vareps$-optimal solution, we aim at finding $\vareps$-stationary solutions (defined later) of \eqref{eq:aff-prob-intro} and \eqref{eq:sp-prob-intro}, which can be verified in practice more easily than the former. Moreover, we consider gradient-based methods for solving the two classes of problems. The considered methods only need to evaluate  $(f, \nabla f)$ and $(\vA(\cdot), \vA^\top(\cdot))$ and use the proximal mappings of $r$ and $\phi$. We are interested in the \emph{oracle complexity} of the studied methods, which is defined as the numbers of queries that the methods make to $(f, \nabla f)$ and $(\vA(\cdot), \vA^\top(\cdot))$, denoted by $Q_f$ and $Q_{\vA}$ respectively, until an $\vareps$-stationary point is found. Additionally, we focus on a practical scenario where the cost of evaluating $(f, \nabla f)$ is significantly higher than $(\vA(\cdot), \vA^\top(\cdot))$ and the proximal mappings defined by $r$ and $\phi$. This scenario arises from many applications in statistics and machine leanring, e.g., linearly constrained LASSO problems~\cite{gaines2018algorithms, james2019penalized}, where evaluating  $(f, \nabla f)$ requires processing a large amount of data while evaluating $(\vA(\cdot), \vA^\top(\cdot))$ does not involve any data and can be relatively easy. 

\subsection{Composite subproblems/approximation} Although \eqref{eq:aff-prob-intro} and \eqref{eq:sp-prob-intro} have different formulations, they can be both solved by the numerical procedures that involve solving \emph{composite optimization} problems with the following structure 
\vspace{-0.2cm}
\begin{equation}\label{eq:cvx-composite}
	F^*=\min_{\vx\in\mathbb{R}^n}\left\{F(\vx)=g(\vx)+H(\vx)\right\} \text{ with }H(\vx)=h(\vx)+r(\vx),
	\vspace{-0.2cm}
\end{equation}
where $g$ and $h$ are smooth convex,  $g$ is $\mu$-strongly convex with $\mu\geqslant 0$, $\nabla g$ is $L_g$-Lipschitz continuous, $\nabla h$ is $L_h$-Lipschitz continuous, and $r$ is as in \eqref{eq:aff-prob-intro} and \eqref{eq:sp-prob-intro}. Hence, reducing the complexity of solving~\eqref{eq:cvx-composite} will lead to more efficient methods for \eqref{eq:aff-prob-intro} and \eqref{eq:sp-prob-intro}. Next, we discuss the relevance of~\eqref{eq:cvx-composite} to \eqref{eq:aff-prob-intro} and \eqref{eq:sp-prob-intro}.

We consider solving \eqref{eq:aff-prob-intro} by an \emph{inexact proximal augmented Lagrangian method}  (iPALM), which performs the following update in the $k$th main iteration 
\vspace{-0.2cm}
\begin{equation}\label{eq:iter-iPALM-intro}
	\vx^{(k+1)}\approx \argmin_{\vx\in\mathbb{R}^n} \textstyle f(\vx) + r(\vx)+ \langle\vlam^{(k)}, \vA\vx-\vb\rangle + \frac{\beta_k}{2}\|\vA\vx-\vb\|^2+\frac{\rho_k}{2}\|\vx-\vx^{(k)}\|^2.
	\vspace{-0.2cm}
\end{equation}
Here $\vx^{(k)}$ is the main iterate, $\vlam^{(k)}$ is the Lagrangian multiplier, $\beta_k>0$ is a penalty parameter and $\rho_k>0$ is a proximal parameter. It is easy to see that the problem in \eqref{eq:iter-iPALM-intro} is an instance of \eqref{eq:cvx-composite} with 
\vspace{-0.1cm}
\begin{equation}\label{eq:aff-instance-intro}
\textstyle	g(\vx)=f(\vx)+\frac{\rho_k}{2}\|\vx-\vx^{(k)}\|^2\quad\text{   and   }\quad h(\vx)= \langle\vlam^{(k)}, \vA\vx-\vb\rangle + \frac{\beta_k}{2}\|\vA\vx-\vb\|^2
\vspace{-0.1cm}
\end{equation}
and the smoothness constants are $L_g=L_f+\rho_k$ and $L_h=\beta_k\|\vA\|^2$. 

For \eqref{eq:sp-prob-intro}, we consider to use the smoothing technique by Nesterov~\cite{nesterov2005smooth}, which approximates \eqref{eq:sp-prob-intro} by the smooth convex optimization problem 
\vspace{-0.2cm}
\begin{equation}\label{eq:sp-prob-intro-smooth}
	\min_{\vx\in\RR^n}\left\{ f(\vx) + r(\vx) +\max_{\vy\in \RR^m} \big[\textstyle \langle \vy, \vA\vx\rangle - \phi(\vy) - \frac{\rho}{2}\|\vy-\vy^{(0)}\|^2 \big]\right\}
	\vspace{-0.2cm}
\end{equation}
and solves \eqref{eq:sp-prob-intro-smooth} using a smooth optimization method. Here, $\rho>0$ is a smoothing parameter, and $\vy^{(0)}\in\dom(\phi)$. Again, we can view \eqref{eq:sp-prob-intro-smooth} as an instance of \eqref{eq:cvx-composite} with 
\vspace{-0.2cm}
\begin{equation}\label{eq:sp-instance-intro}
	g(\vx)=f(\vx)\quad\text{  and  }\quad h(\vx)= \max_{\vy\in \RR^m} \big[\textstyle \langle \vy, \vA\vx\rangle - \phi(\vy) - \frac{\rho}{2}\|\vy-\vy^{(0)}\|^2 \big]
	\vspace{-0.2cm}
\end{equation}	
and the smoothness constants $L_g=L_f$ and $L_h=\|\vA\|^2/\rho$. 

\subsection{Contributions}
Based on these close connections between \eqref{eq:aff-prob-intro}, \eqref{eq:sp-prob-intro} and  \eqref{eq:cvx-composite}, our main contribution is to show that, when the cost of evaluating $(f, \nabla f)$ is significantly higher than $(\vA(\cdot), \vA^\top(\cdot))$, the complexity of solving \eqref{eq:aff-prob-intro} and \eqref{eq:sp-prob-intro} known in the literature can be further reduced if we solve \eqref{eq:iter-iPALM-intro} and \eqref{eq:sp-prob-intro-smooth} using an \emph{inexact accelerated proximal gradient} (iAPG) method, which is a generic method for \eqref{eq:cvx-composite} and requires significantly fewer queries to  $(f, \nabla f)$ than to $(\vA(\cdot), \vA^\top(\cdot))$.

Our iAPG method is a double-loop variant of the  \emph{accelerated proximal gradient} (APG) method~\cite{nesterov2003introductory,nesterov2005smooth,tseng2008accelerated,nesterov2013gradient,beck2009fast}. When applied to \eqref{eq:cvx-composite}, the APG method processes the smooth component, i.e., $g+h$, as a whole and solves \eqref{eq:cvx-composite} by performing the following proximal gradient update in the $k$th iteration
\vspace{-0.2cm}
\begin{equation}\label{eq:prox-r}
\vx^{(k+1)}	=\argmin\limits_{\vx\in\mathbb{R}^n} \textstyle \left\langle\nabla g(\vy^{(k)})+\nabla h(\vy^{(k)}),\vx-\vy^{(k)}\right\rangle+\frac{1}{2\eta_k}\|\vx-\vy^{(k)}\|^2+ r(\vx),
\vspace{-0.2cm}
\end{equation}
where $\vy^{(k)}\in\mathbb{R}^n$ is an auxiliary iterate and $\eta_k>0$ is a step length parameter. By the assumption made on $r$, \eqref{eq:prox-r} can be solved easily, e.g., in a closed form. We denote the numbers of queries to $(g, \nabla g)$ and $(h, \nabla h)$ by $Q_g$ and $Q_h$, respectively. When $\mu>0$, it is known (see e.g., \cite{nesterov2003introductory}) that the APG method finds an $\varepsilon$-optimal solution for \eqref{eq:cvx-composite} with oracle complexity
$
Q_g=Q_h=O\left(\sqrt{\frac{L_g+L_h}{\mu}}\log\left(\frac{1}{\varepsilon}\right)\right).
$
However, according to the instantizations in \eqref{eq:aff-instance-intro} and \eqref{eq:sp-instance-intro},
evaluating $(g, \nabla g)$ has significantly higher complexity than $(h, \nabla h)$ in both instances since the former requires evaluating $(f, \nabla f)$ while the latter only requires evaluating $(\vA(\cdot), \vA^\top(\cdot))$. Given that, a potential strategy (see, e.g.,~\cite{lan2016gradient,lan2016accelerated}) to reduce the overall complexity for solving \eqref{eq:cvx-composite}, and thus, for solving \eqref{eq:aff-prob-intro} and \eqref{eq:sp-prob-intro} is to query $(g, \nabla g)$ and $(h, \nabla h)$ in different frequencies so as to reduce $Q_g$, even if doing so may slightly increase $Q_h$.

To implement this strategy, one technique is to separate $g$ and $h$ by solving the following \emph{proximal mapping} subproblem in the $k$th iteration 
\vspace{-0.2cm}
\begin{equation}\label{eq:prox-hr}
\vx^{(k+1)}	=\argmin\limits_{\vx\in\mathbb{R}^n} \textstyle \left\langle\nabla g(\vy^{(k)}),\vx-\vy^{(k)}\right\rangle+\frac{1}{2\eta_k}\|\vx-\vy^{(k)}\|^2+ h(\vx)+r(\vx).
\vspace{-0.2cm}
\end{equation}
Unlike \eqref{eq:prox-r}, \eqref{eq:prox-hr} typically cannot be solved easily. A practical solution is to use another optimization algorithm to solve \eqref{eq:prox-hr} inexactly to certain precision. This requires a double-loop implementation. 
Note that \eqref{eq:prox-hr} is itself an instance of \eqref{eq:cvx-composite} and thus can be solved inexactly by the APG method in oracle complexity with logarithmic dependency on the precision thanks to the smoothness of $h$,  the simplicity of $r$, and the strong convexity of $\frac{1}{2\eta_k}\|\vx-\vy^{(k)}\|^2$. By choosing appropriate precision for solving \eqref{eq:prox-hr} in each iteration, we show that, when $\mu>0$, our iAPG method can find an $\vareps$-stationary solution 
of~\eqref{eq:cvx-composite} with oracle complexity\footnote{Here and in the rest of the paper, $\tilde O$ suppresses some logarithmic terms. } 
\vspace{-0.1cm}
\begin{equation}
\label{eq:complexity-comp-opt}
\textstyle Q_g=O\left(\sqrt{\frac{L_g}{\mu}}\log\left(\frac{1}{\varepsilon}\right)\right)\text{ and }Q_h=\tilde O\left(\sqrt{\frac{L_g+L_h}{\mu}}\log\left(\frac{1}{\varepsilon}\right)\right).
\vspace{-0.1cm}
\end{equation}
Since the evaluation of $(g, \nabla g)$ is more costly than  $(h, \nabla h)$, the iAPG method can have a lower overall complexity than the APG method  when $L_h$ is significantly larger than $L_g$.

According to \eqref{eq:aff-instance-intro}, the iAPG method has lower complexity than the APG method for solving \eqref{eq:iter-iPALM-intro} when $\beta_k$ is much larger than $\rho_k$, which is exactly the case in the iPALM. As a consequence, we show that the iPALM, in which \eqref{eq:iter-iPALM-intro} is solved by the iAPG method, finds an $\vareps$-stationary point of \eqref{eq:aff-prob-intro} with oracle complexity\footnote{The factor $\log^2\left(\frac{1}{\vareps}\right)$ in $Q_f$ can be reduced to $\log\frac{1}{\vareps}$ if $\beta_0=\Theta(\frac{1}{\vareps})$ and $\rho_0=\Theta(\vareps)$; see Remark~\ref{rm:affine-cstr}.}
\vspace{-0.3cm}	
\begin{equation}
\label{eq:complexity-aff-prob}
\textstyle Q_f=O\left(\sqrt{\frac{L_f}{\mu}} \log^2\left(\frac{1}{\vareps}\right)\right)\text{ and }Q_{\vA}=\tilde O\left(\sqrt{\frac{L_f}{\mu}}\log\left(\frac{1}{\vareps}\right) + \frac{\|\vA\|}{\sqrt{\mu\vareps}}\right).
\vspace{-0.2cm}
\end{equation}
Without the affine constraint $\vA\vx = \vb$, it is shown by~\cite{nemirovskij1983problem,nesterov2003introductory} that any gradient-based method has to evaluate  $(f, \nabla f)$ at least $\Omega\left(\sqrt{\frac{L_f}{\mu}} \log\left(\frac{1}{\vareps}\right)\right)$ times to find an $\vareps$-optimal point of  \eqref{eq:aff-prob-intro}. With $\vA\vx = \vb$, it is shown by 	\cite{ouyang2021lower} that any gradient-based method needs to evaluate $(\vA(\cdot), \vA^\top(\cdot))$ at least $O(\frac{\|\vA\|}{\sqrt{\mu\vareps}})$ times. In either case, the complexity of the iPALM matches the corresponding lower bound up to logarithmic factors.

Similarly, according to \eqref{eq:sp-instance-intro}, the iAPG method has lower complexity than the APG method
when $\rho$ is small, which is true for the smoothing method. In fact, to obtain an $\vareps$-optimal point of \eqref{eq:sp-prob-intro} by solving \eqref{eq:sp-prob-intro-smooth}, one needs to set $\rho=\Theta(\vareps)$. In this case, we show that, when $\mu>0$, the smooothing method where \eqref{eq:sp-prob-intro-smooth} is solved by the iAPG method finds an $\vareps$-stationary point of \eqref{eq:sp-prob-intro} with the same oracle complexity as \eqref{eq:complexity-aff-prob}. This complexity matches the lower bound \cite{ouyang2021lower} up to logarithmic factors. 

\noindent\textbf{Summary of contributions.}~We summarize our contributions that are mentioned above.
\begin{itemize}[leftmargin=*]
	\item[$\bullet$] We give an iAPG method for solving \eqref{eq:cvx-composite}. 
It is a double-loop method where the inner iterations are terminated using a computable stopping criterion based on the stationarity measure of the solution. This is different from existing double-loop approaches, e.g., \cite{lan2016accelerated, schmidt2011convergence}, which require a pre-determined total number of iterations that often depends on some unknown parameters of the problems.
	\item We show the oracle complexity of the proposed iAPG method, which is given in \eqref{eq:complexity-comp-opt}. When evaluating $(g, \nabla g)$ has significantly higher complexity than $(h, \nabla h)$ but $L_g$ is much smaller than $L_h$, the iAPG method is superior over the APG method for solving \eqref{eq:cvx-composite}.  This scenario arises in the subproblems solved during the iPALM for \eqref{eq:aff-prob-intro} and the smooth approximation of \eqref{eq:sp-prob-intro}.
	\item[$\bullet$] Applying the iAPG method to the subproblems solved during the iPALM for \eqref{eq:aff-prob-intro}, we derive the oracle complexity in \eqref{eq:complexity-aff-prob} of the iPALM for finding an $\varepsilon$-stationary solution. This complexity is better than existing ones, e.g., \cite{xu2017accelerated, hamedani2021primal}, when $(f, \nabla f)$ is significantly more expensive than $(\vA(\cdot), \vA^\top(\cdot))$. The complexity result in \cite{lan2021graph} is similar to ours\footnote{The complexity in \cite{lan2021graph} is lower than that in \eqref{eq:complexity-aff-prob} by a logarithmic factor. However, \cite{lan2021graph} targets an $\vareps$-optimal solution.}. However, the inner loop of the method in \cite{lan2021graph} requires a pre-determined number of iterations, and this often yields poor practical performance; see the experimental results in section~\ref{sec:numerical}. Additionally, we show that the iAPG method, in combination with the smoothing techinique by Nesterov, can find  an $\varepsilon$-stationary solution of \eqref{eq:sp-prob-intro} with complexity given in \eqref{eq:complexity-aff-prob} that is also better than existing ones. 
\end{itemize}
 

\subsection{Notation} 
We use lower-case bold letters $\vx,\vy, \vb, \ldots$ for vectors, $\vone$ for an all-one vector/matrix of appropriate size, and upper-case bold letters $\vA, \vW,\ldots$ for matrices. $\vx\odot\vy$ denotes the component-wise product of $\vx$ and $\vy$.
For any number sequence $\{a_i\}_{i\geqslant 0}$, we define $\sum_{i=k_1}^{k_2}a_i=0$ and $\prod_{i=k_1}^{k_2}a_i=1$ if $k_1>k_2$. 
The proximal mapping of a proper function $r$ is defined as 
$
\prox_{r}\big(\vz\big):=\argmin_{\vx}\left\{\frac{1}{2}\|\vx-\vz\|^2 +  r(\vx)\right\}.
$

\section{Literature review}

The APG method~\cite{nesterov2003introductory,nesterov2005smooth,tseng2008accelerated,nesterov2013gradient,beck2009fast} is an optimal gradient-based method for the composite optimization when there is no constraint and the non-smooth component in the objective function allows for an easy proximal mapping. However, the APG method cannot be directly applied to \eqref{eq:aff-prob-intro} due to the affine constraints 
or to \eqref{eq:sp-prob-intro} due to the sophisticated non-smooth term. The iAPG methods by~\cite{jiang2012inexact,schmidt2011convergence} are double-loop implementations of the APG method where the proximal gradient subproblem in each iteration is solved inexactly by another optimization algorithm. The iAPG method we studied in this paper is similar to~\cite{jiang2012inexact,schmidt2011convergence}. However, our method includes a line search scheme. 
Moreover, the inner loop in our method is terminated based on a computable stationarity measure while \cite{schmidt2011convergence} requires a pre-determined total number of inner iterations that often depends on some unknown parameters of the problems. 


The augmented Lagrangian method (ALM)~\cite{hestenes1969multiplier,rockafellar1976augmented,powell1978fast} and its modern variants \cite{he2010acceleration,huang2013accelerated,kang2015inexact,lan2016iteration,kang2013accelerated,patrascu2017adaptive,xu2017accelerated,he2021indefinite,sabach2020faster,xu2020first,xu2021iteration,xu2021first,he2021convergence,he2021fast,he2021inertial,bot2021fast} can be applied to \eqref{eq:aff-prob-intro}.  However, the methods in \cite{he2010acceleration,kang2013accelerated} require exactly solving the ALM subproblem, i.e., \eqref{eq:iter-iPALM-intro} with $\rho_k=0$, which is not practical for many applications. Inexact ALMs are studied by~\cite{lan2016iteration,patrascu2017adaptive,xu2021iteration} where ALM subproblems are solved inexactly by the APG method. When $\mu=0$, these methods have oracle complexity $Q_f=Q_{\vA}=O(\frac{1}{\vareps})$ and,  when $\mu>0$, the method by~\cite{xu2021iteration} has oracle complexity $Q_f=Q_{\vA}=O(\frac{1}{\sqrt{\vareps}})$. An accelerated linearized ALM is studied by \cite{xu2017accelerated} where $f$ in \eqref{eq:aff-prob-intro} is linearized in the ALM subproblem. If the augmented term is also linearized so that the subproblem can be solved exactly, the method by~\cite{xu2017accelerated} has the same oracle complexity as~\cite{xu2021iteration} in both the cases when $\mu=0$ and when $\mu>0$. If the augmented term is not linearized, the methods by~\cite{xu2017accelerated,bot2021fast,he2021convergence,he2021inertial} only need $O(\frac{1}{\sqrt{\vareps}})$ iterations even when $\mu=0$, but the ALM subproblem becomes challenging to solve exactly. The linearized ALM method is analyzed in a unified framework together with other variants of the ALM by \cite{sabach2020faster} and is generalized for nonlinear constraints by~\cite{xu2021first}. The same complexity as~\cite{xu2021iteration} is achieved in~\cite{sabach2020faster,xu2021first}. A cutting-plane based ALM is proposed by~\cite{xu2020first} which can find an $\vareps$-KKT point for \eqref{eq:aff-prob-intro} with oracle complexity $Q_f=Q_{\vA}=\tilde O(\frac{m}{\sqrt{\vareps}})$ when $\mu=0$ and $Q_f=Q_{\vA}=\tilde O(m\log(\frac{1}{\vareps}))$ when $\mu>0$, where $m$ is the number of constraints. Hence, its complexity is better than ours only when $m = o(\vareps^{-\frac{1}{2}})$.  ALM-type methods based on dynamical systems~\cite{he2021convergenceinertial,he2021perturbed,boct2021improved} are developed in \cite{he2021convergence,he2021fast} which find an $\vareps$-solution within $o(\frac{1}{\sqrt{\vareps}})$ iterations if each ALM subproblem is solved exactly or inexactly with controllable errors. However, their convergence result is asymptotic, and the total oracle complexity is not given. 


The (linearized) Bregman methods~\cite{yin2008bregman,yin2010analysis} and their accelerated variants~\cite{huang2013accelerated,kang2013accelerated} are equivalent to gradient-based methods applied to the Lagrangian dual problem of \eqref{eq:aff-prob-intro}. Similar techniques are explored in~\cite{gorbunov2019optimal,dvinskikh2021decentralized}. 
 However, these methods require easy evaluation of the proximal mapping of $f$, which limits their applications. For  \eqref{eq:aff-prob-intro} with a strongly convex but not necessarily smooth objective,  a dual $\vareps$-optimal solution can be found by an accelerated Uzawa method~\cite{tao2017accelerated} or an inexact ALM method~\cite{kang2015inexact} within $O(\frac{1}{\sqrt{\vareps}})$ main iterations. However, the method in \cite{tao2017accelerated} requires solving a Lagrangian subproblem exactly and is thus impractical for general $f$. Although the method by~\cite{kang2015inexact} only needs to solve ALM subproblems inexactly, the authors only analyze the total number of main iterations but not the overall oracle complexity. 

Penalty methods~\cite{gorbunov2019optimal,dvinskikh2021decentralized,lan2013iteration,li2017convergence} are also classical approaches for \eqref{eq:aff-prob-intro}, where the affine constraints are moved to the objective function through a penalty term and the unconstrained penalty problem is then solved by another optimization algorithm like the APG method.  The primal method in~\cite{gorbunov2019optimal,dvinskikh2021decentralized} requires $r=0$ and $\vA$ is positive semidefinite while the dual method in~\cite{gorbunov2019optimal,dvinskikh2021decentralized} requires an easy evaluation of the convex conjugate function of $f$, which limits the applications. When $\mu=0$, \cite{lan2013iteration} shows that, if the penalty parameter is large enough, the penalty method finds an $(\vareps,\vareps)$-primal-dual solution of \eqref{eq:aff-prob-intro} (see Def.~1 in \cite{lan2013iteration}) with oracle complexity $Q_f=Q_{\vA}=O(\frac{1}{\vareps})$. The penalty method by \cite{li2017convergence} solves a sequence of unconstrained penalty problems with increasing penalty parameters and only performs one iteration of the APG method on each penalty problem. It has oracle complexity  $Q_f=Q_{\vA}=O(\frac{1}{\vareps})$ when $\mu=0$ and  $Q_f=Q_{\vA}=O(\frac{1}{\sqrt{\vareps}})$ when $\mu>0$. The complexity of the penalty methods in \cite{lan2013iteration,li2017convergence} are higher than ours in both cases.

Using Lagrangian multipliers, a constrained optimization problem can be formulated as a min-max problem to which the primal-dual methods~\cite{tran2014constrained,tran2014primal,tran2018smooth,yurtsever2015universal,wei2018solving}, mostly based on smoothing technique~\cite{nesterov2005smooth}, can be applied. However, the methods by~\cite{tran2014constrained,wei2018solving,tran2018smooth} require a closed-form solution of the proximal mapping of $f$ while the method by~\cite{yurtsever2015universal} requires a closed-form solution of the convex conjugate function of $f$, and thus they have  limited applications. The authors of~\cite{tran2014primal} extend the algorithm and analysis in \cite{tran2014constrained} by allowing the proximal mapping of $f$ to be evaluated inexactly. However, they do not include the oracle complexity for inexactly evaluating the proximal mapping in their complexity analysis.




Smoothing techniques~\cite{nesterov2005smooth,beck2012smoothing,allen2016optimal} are a class of effective approaches for solving the structured non-smooth problem~\eqref{eq:sp-prob-intro}. These methods construct close approximation of \eqref{eq:sp-prob-intro} by one or a sequence of smooth problems, which are then solved by smooth optimization methods such as the APG method.  When $\mu=0$, the methods by \cite{nesterov2005smooth,beck2012smoothing,allen2016optimal} find an $\vareps$-optimal solution with complexity $Q_f=Q_{\vA}=O(\frac{\|A\|}{\vareps}+\sqrt{\frac{L_f}{\vareps}})$. When $\mu>0$, the adapative smoothing method by \cite{allen2016optimal} finds an $\vareps$-optimal solution with complexity $Q_f=Q_{\vA}=O(\sqrt{\frac{L_f}{\mu}}\log(\frac{1}{\vareps})+\frac{\|A\|}{\sqrt{\mu\vareps}})$, which is higher than our complexity given in \eqref{eq:complexity-aff-prob} when the query to $(f,\nabla f)$ is significantly more costly than $(\vA(\cdot), \vA^\top(\cdot))$.

In the literature, \eqref{eq:sp-prob-intro} has also been studied as a bilinear saddle point problem~\cite{bredies2016accelerated,chambolle2011first,chen2014optimal,he2016accelerated,nesterov2005excessive,zhao2019accelerated,zhao2019optimal}. The methods in \cite{bredies2016accelerated,chambolle2011first,nesterov2005excessive} require a closed form  of the proximal mapping of $f+r$ and thus may not be applicable to \eqref{eq:sp-prob-intro}. When $\mu=0$, the methods by \cite{he2016accelerated,chen2014optimal,zhao2019accelerated,zhao2019optimal} find an $\vareps$-saddle-point (see Def.~3.1 in \cite{he2016accelerated}) or an $\vareps$-optimal solution with the same oracle complexity as the smoothing methods mentioned above. When $\mu>0$, the method by~\cite{zhao2019optimal} finds an $\vareps$-optimal solution with the same oracle complexity as the smoothing method~\cite{allen2016optimal}. Problem~\eqref{eq:sp-prob-intro} has also been studied as a variational inequality~\cite{nemirovski2004prox,chen2017accelerated,tseng2008accelerated}. In particular, when $\mu=0$, the mirror-prox methods in~\cite{nemirovski2004prox,tseng2008accelerated} find an $\vareps$-optimal solution of~\eqref{eq:sp-prob-intro} with complexity $Q_f=Q_{\vA}=O(\frac{L_f+\|A\|}{\vareps})$, which is later reduced to  $Q_f=Q_{\vA}=O(\sqrt{\frac{L_f}{\vareps}}+\frac{\|A\|}{\vareps})$ by \cite{chen2017accelerated}. 

For all the methods we discussed above for solving \eqref{eq:aff-prob-intro} and \eqref{eq:sp-prob-intro}, the oracle complexity is essentially the number of iterations the algorithms perform to find the desired solution. Since all of those methods always evaluate both $(f, \nabla f)$ and $(\vA(\cdot), \vA^\top(\cdot))$ in each iteration, $Q_f$ and $Q_{\vA}$ are the same for them. 
When the evaluation cost of $(f, \nabla f)$ is significantly higher than that of $(\vA(\cdot), \vA^\top(\cdot))$, it will be more efficient to query $(f, \nabla f)$ less frequently than  $(\vA(\cdot), \vA^\top(\cdot))$ without compromising the solution quality. This actually can be achieved using the gradient sliding techniques~\cite{lan2016gradient,lan2016accelerated,lan2016conditional,lan2020communication,ouyang2021universal}, which compute the gradient of one (more expensive) component of the objective function once in each outer iteration and process the remaining components in each inner iteration. The iAPG method in this paper utilizes a similar double-loop technique to differentiate the frequencies of evaluating $(f, \nabla f)$ and $(\vA(\cdot), \vA^\top(\cdot))$ and thus reduce $Q_f$.  Although the idea behind the iAPG method is similar to the gradient sliding techniques, such a technique has not be studied for problem \eqref{eq:aff-prob-intro}  under an iPALM framework. Although \eqref{eq:sp-prob-intro} has been studied by~\cite{lan2016gradient,lan2016accelerated}, we consider the case of $\mu>0$, which is not covered in~\cite{lan2016gradient} and for which \cite{lan2016accelerated} needs to apply the sliding method for convex cases in multiple stages. Moreover, in the existing works on the gradient sliding techniques, the inner loop must run for a pre-determined number of iterations which depends on some unknown parameters of the problems. On the contrary, we terminate our inner loop based on a computable stationarity measure, which makes our method more efficient in practice as we demonstrate in Section~\ref{sec:numerical}.

\section{Inexact Accelerated Proximal Gradient Method with Line Search}

In this section, we consider 
\eqref{eq:cvx-composite} where $g$ is $\mu$-strongly convex with $\mu\geqslant 0$ and $L_g$-smooth (i.e. $\nabla g$ is $L_g$-Lipschitz continuous), $h$ are convex and $L_h$-smooth,  
and $r$ is closed convex and allows easy computation of $\prox_{\eta r}\big(\vz\big)$ for any $\vz\in\mathbb{R}^n$ and $\eta>0$. We assume that $(g,\nabla g)$ is significantly more costly to evaluate than $(h,\nabla h)$ and $L_g$ is significantly smaller than $L_h$. To have a low overall complexity, we propose an iAPG method that calls $(g,\nabla g)$ less frequently than $(h,\nabla h)$. It is
%
given in Algorithm~\ref{alg:iAPG}. This algorithm is a modification of the APG method in \cite[Algorithm~2.2.19]{nesterov2003introductory} by including a line search procedure (in Algorithm~\ref{alg:accellinesearch}) for the step length parameter $\eta_k$ and solving the following proximal mapping subproblem inexactly
\vspace{-0.1cm}
\begin{equation}\label{eq:def-Phi}
	\textstyle\vx^{(k+1)}\approx\vx^{(k+1)}_*:=\argmin\limits_{\vx\in\mathbb{R}^n}\left\{\Phi(\vx; \vy^{(k)}, \eta_k):=\left\langle\nabla g(\vy^{(k)}),\vx-\vy^{(k)}\right\rangle+\frac{1}{2\eta_k}\|\vx-\vy^{(k)}\|^2+h(\vx) + r(\vx)\right\}.
	\vspace{-0.2cm}
\end{equation}
Compared to the APG method that requires $\vx^{(k+1)}=\vx^{(k+1)}_*$, 
our iAPG method only needs $\vx^{(k+1)}$ to be an $\vareps_k$-stationary point, namely, a point satisfying the inequality \eqref{eq:xkprecision}. Our line search procedure 
follows that in 
\cite{lin2015adaptive} for the APG method 
on solving strongly convex problems.

It can be shown that $\vx^{(k+1)}$ produced by the iAPG method is an $\vareps$-optimal solution of \eqref{eq:cvx-composite} if $k$ is large enough and $\vareps_k$ is small enough. However, 
we are more interested in finding an $\vareps$-stationary solution of \eqref{eq:cvx-composite}. 
For this purpose, we just need to perform a proximal gradient step from  $\vx^{(k+1)}$ using a step length $\tilde\eta_k$ that is potentially different from $\eta_k$ and can also be searched by the standard scheme as in~\cite{beck2009fast}. We present the optional procedure to generate an $\vareps$-stationary solution from the iAPG method in Algorithm~\ref{alg:seekstationary}.

\begin{algorithm}[h]
{\small
	\caption{$\mathrm{iAPG}(g, h, r, \vx^{(0)}, \eta_{-1}, \gamma_0, \mu, \underline{L},(\vareps_k)_{k\geqslant 0})$  for solving \eqref{eq:cvx-composite}} 
	\label{alg:iAPG}
\DontPrintSemicolon	
		{\bfseries Inputs:} $\vz^{(0)}=\vx^{(0)}\in\dom(H)$, $\eta_{-1}\leqslant \frac{1}{\mu}$,   
		$\gamma_0\in\left\{\begin{array}{ll} (0,1/\eta_{-1}]&\text{ if }\mu=0 \\ \text{[}\mu,1/\eta_{-1}] &\text{ if }\mu>0\end{array}\right.$, $\mu\leqslant \underline{L}\in (0,L_g]$, and $\vareps_k\geqslant 0, \forall\, k$\;
		\textbf{Optional:} choose $\tilde\eta_0 \leqslant \frac{1}{\underline{L}}$ and $\vareps\geqslant 0$\;
		\For{$k = 0,1,\ldots,$}{
		Generate $(\vx^{(k+1)},\vy^{(k)},\gamma_{k+1},\eta_k,\alpha_k) = \text{InexactAccelLineSearch}(\vx^{(k)},\vz^{(k)},\gamma_k, \eta_{k-1},\mu,\underline{L},\vareps_k)$\;
		$\vz^{(k+1)} =\vx^{(k)}+\frac{1}{\alpha_k}\left(\vx^{(k+1)}-\vx^{(k)}\right)$\;
		\textbf{Optional:} $(\widetilde\vx^{(k+1)},\tilde\eta_{k+1}) = \text{SeekStationary}(\vx^{(k+1)}, \tilde\eta_k)$\;
		\textbf{Optional:} Return  $\widetilde\vx^{(k+1)}$ if $\dist\big(\vzero, \partial F(\widetilde\vx^{(k+1)})\big)\leqslant \vareps$
		}
}		
\end{algorithm}

\begin{algorithm}[h]
{\small\caption{$(\vx^{(k+1)},\vy^{(k)},\gamma_{k+1},\eta_k,\alpha_k) = \text{InexactAccelLineSearch}(\vx^{(k)},\vz^{(k)},\gamma_k, \eta_{k-1},\mu,\underline{L},\vareps_k)$}
	\label{alg:accellinesearch}
	\DontPrintSemicolon	
	{\bfseries Parameters:} $\gamma_{\mathrm{dec}}\in(0,1)$ and $\gamma_{\mathrm{inc}}\in[1,+\infty)$\;
		$\eta_k\leftarrow \min\big\{\frac{1}{\gamma_{\mathrm{dec}} \underline{L}}, \ \gamma_{\mathrm{inc}}\eta_{k-1}\big\}$\;
	\Repeat{$g(\vx^{(k+1)})\leqslant  g(\vy^{(k)}) + \langle\nabla g(\vy^{(k)}),\vx^{(k+1)}-\vy^{(k)}\rangle+\frac{1}{2\eta_k}\|\vx^{(k+1)}-\vy^{(k)}\|^2$}{
		$\eta_k\leftarrow \gamma_{\mathrm{dec}}\eta_k$; find $\alpha_k>0$ and $\gamma_{k+1}$ that satisfy
		$\gamma_{k+1}=\frac{\alpha_k^2}{\eta_k} = \left(1-\alpha_k\right)\gamma_k + \alpha_k \mu$\; 
		Let $\vy^{(k)} =  \frac{1}{\alpha_k \gamma_k+\gamma_{k+1}} \left(\alpha_k \gamma_k \vz^{(k)} + \gamma_{k+1} \vx^{(k)}\right)$\;
		Find $\vx^{(k+1)}$ such that 
		\vspace{-0.1cm}
		\begin{eqnarray}
			\label{eq:xkprecision}
		\textstyle	\dist\left(\vzero, \nabla g(\vy^{(k)}) + \frac{1}{\eta_k}(\vx^{(k+1)}-\vy^{(k)}) + \partial H(\vx^{(k+1)})\right) \leqslant \vareps_k 
		\vspace{-0.1cm}
		\end{eqnarray}
	}
	{\bfseries Return:} $(\vx^{(k+1)},\vy^{(k)},\gamma_{k+1},\eta_k,\alpha_k)$
}
\end{algorithm}

\begin{algorithm}[h]
{\small
\caption{$(\widetilde\vx, \tilde\eta) = \text{SeekStationary}(\vx, \eta)$}
\label{alg:seekstationary}
\DontPrintSemicolon	
$\tilde\eta\leftarrow\eta/\gamma_{\mathrm{dec}}$ with the same $\gamma_{\mathrm{dec}}\in(0,1)$ as that in Algorithm~\ref{alg:accellinesearch}\;
\Repeat{$g(\widetilde\vx)+h(\widetilde\vx)\leqslant  g(\vx)+h(\vx) +\left\langle\nabla g(\vx)+\nabla h(\vx),\widetilde\vx-\vx\right\rangle+\frac{1}{2\tilde\eta}\|\widetilde\vx-\vx\|^2$}{
$\tilde\eta \gets \gamma_{\mathrm{dec}}\tilde\eta$ and let $\widetilde\vx = \prox_{\tilde\eta r}\big(\vx - \tilde\eta(\nabla g(\vx)+\nabla h(\vx))\big)$.
}
{\bfseries Return:} $\tilde\eta $.
}
\end{algorithm}

\subsection{Convergence analysis for iAPG}

In this subsection, we analyze the convergence rate of the proposed iAPG. The analysis also applies to APG by setting $\vareps_k=0$. The technical lemmas below are needed. 

\begin{lemma}\label{lemma:cond-alpha-gamma}
Let $(\eta_k,\tilde\eta_k, \alpha_k,\gamma_k)$ be generated from Algorithm~\ref{alg:iAPG}. It holds that, for any $k\geqslant 0$,
\vspace{-0.2cm}
\begin{equation}\label{eq:cond-alpha-gamma}
\textstyle  \frac{\gamma_{\mathrm{dec}}}{L_g}<\eta_k\leqslant\frac{1}{\underline{L}}, ~\frac{\gamma_{\mathrm{dec}}}{L_g+L_h}<\tilde\eta_k\leqslant\frac{1}{\underline{L}},~\alpha_k\leqslant 1 ~\text{and}~\gamma_k \geqslant  \mu.
\vspace{-0.3cm}
 \end{equation} 	
\end{lemma}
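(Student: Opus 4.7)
The plan is to establish the four inequalities in an order that respects their interdependence: the step-size bounds on $\eta_k$ and $\tilde\eta_k$ are essentially self-contained and follow from the line-search rules; the bound $\alpha_k\leqslant 1$ then relies on $\eta_k\leqslant 1/\underline L\leqslant 1/\mu$ (which needs $\mu\leqslant\underline L$); and finally $\gamma_k\geqslant\mu$ drops out from a one-step induction that uses $\alpha_k\in[0,1]$. I would prove them in exactly this order.

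For the bounds on $\eta_k$: the initialization sets $\eta_k\leftarrow\min\{1/(\gamma_{\mathrm{dec}}\underline L),\,\gamma_{\mathrm{inc}}\eta_{k-1}\}$, so after the first pass of the Repeat loop (which multiplies by $\gamma_{\mathrm{dec}}$) the value is already $\leqslant 1/\underline L$, and any further pass only shrinks it further; hence $\eta_k\leqslant 1/\underline L$. For the strict lower bound $\eta_k>\gamma_{\mathrm{dec}}/L_g$ I would use the standard descent-lemma observation: since $g$ is $L_g$-smooth, the Armijo-type inequality in the ``Until'' clause holds automatically whenever $\eta_k\leqslant 1/L_g$. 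Thus, if the Repeat loop runs more than one pass, the value of $\eta_k$ just before the last $\gamma_{\mathrm{dec}}$-multiplication must have exceeded $1/L_g$, giving $\eta_k>\gamma_{\mathrm{dec}}/L_g$ at termination. If the loop runs exactly one pass, I would split on which branch of the initial ``min'' is active: on the $1/(\gamma_{\mathrm{dec}}\underline L)$ branch the post-multiplication value is exactly $1/\underline L\geqslant 1/L_g>\gamma_{\mathrm{dec}}/L_g$ (using $\underline L\leqslant L_g$), and on the $\gamma_{\mathrm{inc}}\eta_{k-1}$ branch one uses the inductive hypothesis together with $\gamma_{\mathrm{inc}}\geqslant 1$. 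The bounds on $\tilde\eta_k$ follow by a direct transcription: Algorithm~\ref{alg:seekstationary}'s test is an Armijo condition for the full smooth part $g+h$, whose gradient is $(L_g+L_h)$-Lipschitz, so the identical argument gives $\gamma_{\mathrm{dec}}/(L_g+L_h)<\tilde\eta_k\leqslant 1/\underline L$.

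For $\alpha_k\leqslant 1$: the defining identity $\alpha_k^2/\eta_k=(1-\alpha_k)\gamma_k+\alpha_k\mu$ rearranges to the upward parabola $p(\alpha):=\alpha^2/\eta_k+(\gamma_k-\mu)\alpha-\gamma_k=0$. Then $p(0)=-\gamma_k\leqslant 0$, and because $\eta_k\leqslant 1/\underline L\leqslant 1/\mu$, we obtain $p(1)=1/\eta_k-\mu\geqslant 0$, so the relevant positive root sits in $[0,1]$. For $\gamma_k\geqslant\mu$: the base case $\gamma_0\geqslant\mu$ holds by the input specification (with the convention that $\mu=0$ makes it vacuous), and the recursion $\gamma_{k+1}=(1-\alpha_k)\gamma_k+\alpha_k\mu$ is a convex combination of $\gamma_k$ and $\mu$ whenever $\alpha_k\in[0,1]$; hence $\gamma_k\geqslant\mu$ is preserved by induction.

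The only non-routine wrinkle I anticipate is the one-pass branch of the lower bound on $\eta_k$, where the ``min'' in the initialization and the factor $\gamma_{\mathrm{inc}}\geqslant 1$ interact and force a small case split. The cleanest way past it is to exploit $\underline L\leqslant L_g$, which immediately gives $1/\underline L\geqslant 1/L_g>\gamma_{\mathrm{dec}}/L_g$ on the branch where the $1/(\gamma_{\mathrm{dec}}\underline L)$ cap is active, leaving only the other branch to be handled by induction.
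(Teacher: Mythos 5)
Your proposal follows essentially the same route as the paper's proof: the cap $\eta_k\leqslant 1/\underline{L}$ from the initialization plus the mandatory first multiplication by $\gamma_{\mathrm{dec}}$; the lower bound from the descent-lemma fact that the Armijo test must pass once $\eta_k\leqslant 1/L_g$; $\alpha_k\leqslant 1$ from the defining quadratic together with $1/\eta_k\geqslant\underline{L}\geqslant\mu$ (the paper writes the explicit root and bounds the discriminant, you locate the root by a sign change at $0$ and $1$ — the same inequality in disguise); and $\gamma_k\geqslant\mu$ by the convex-combination induction. The analogous treatment of $\tilde\eta_k$ is also identical in spirit.

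The one place you go beyond the paper is the one-pass branch of the lower bound on $\eta_k$, and there your resolution does not actually deliver the stated constant. On the branch where the loop terminates after a single pass with $\eta_k=\gamma_{\mathrm{dec}}\gamma_{\mathrm{inc}}\eta_{k-1}$, the inductive hypothesis $\eta_{k-1}>\gamma_{\mathrm{dec}}/L_g$ combined with $\gamma_{\mathrm{inc}}\geqslant 1$ only yields $\eta_k>\gamma_{\mathrm{dec}}^2/L_g$, which is strictly weaker than the claimed $\eta_k>\gamma_{\mathrm{dec}}/L_g$ since $\gamma_{\mathrm{dec}}<1$. To close this you need $\gamma_{\mathrm{dec}}\gamma_{\mathrm{inc}}\geqslant 1$ (as in the paper's experiments, where $\gamma_{\mathrm{dec}}=1/2$, $\gamma_{\mathrm{inc}}=2$), in which case $\eta_k\geqslant\eta_{k-1}$ and the induction goes through, together with a matching lower bound on $\eta_{-1}$ for the base case; with only the stated parameter ranges $\gamma_{\mathrm{dec}}\in(0,1)$, $\gamma_{\mathrm{inc}}\in[1,\infty)$ and an arbitrary $\eta_{-1}\leqslant 1/\mu$, the step sizes can in principle shrink below $\gamma_{\mathrm{dec}}/L_g$ (take $\gamma_{\mathrm{inc}}=1$ and $\eta_{-1}$ tiny: every line search terminates in one pass and $\eta_k=\gamma_{\mathrm{dec}}^{k+1}\eta_{-1}\to 0$). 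To be fair, the paper's own proof asserts the lower bound only from "the loop stops once $\eta_k\leqslant 1/L_g$," which covers solely the multi-pass case and silently skips this sub-case entirely — so you correctly identified the only delicate point in the lemma, even though the fix you sketch for it is not quite sufficient as written.
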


\begin{proof}
From Lines~2 and 4 of Algorithm~\ref{alg:accellinesearch}, we must have  $\eta_k \leqslant \frac{1}{\underline{L}}$ for $k\geqslant 0$ in Algorithm~\ref{alg:iAPG}. In addition, the condition in Line~7 of Algorithm~\ref{alg:accellinesearch} will hold and Algorithm~\ref{alg:accellinesearch} will stop if $\eta_k\leqslant \frac{1}{L_g}$. Given Line~4 of Algorithm~\ref{alg:accellinesearch}, we must have $\eta_k > \frac{\gamma_{\mathrm{dec}}}{L_g}$ in Algorithm~\ref{alg:iAPG}. By the same arguments, we can prove $\frac{\gamma_{\mathrm{dec}}}{L_g+L_h}<\tilde\eta_k\leqslant\frac{1}{\underline{L}}$. 

Now solving $\alpha_k$ from equation $\frac{\alpha_k^2}{\eta_k} = \left(1-\alpha_k\right)\gamma_k + \alpha_k \mu$ in Line 4 of Algorithm~\ref{alg:accellinesearch}, we have
\vspace{-0.1cm}
	\begin{equation}\label{eq:alpha-value}
	\textstyle\alpha_k=\frac{-(\gamma_k-\mu)+\sqrt{(\gamma_k-\mu)^2+4\gamma_k/\eta_k}}{2/\eta_k}=\frac{2\gamma_k}{(\gamma_k-\mu)+\sqrt{(\gamma_k-\mu)^2+4\gamma_k/\eta_k}}.
\vspace{-0.1cm}	
	\end{equation} 
Since $\mu\leqslant \underline{L}\leqslant 1/\eta_k$, we have $(\gamma_k-\mu)^2+4\gamma_k/\eta_k \geqslant   (\gamma_k+\mu)^2$. Thus it follows from \eqref{eq:alpha-value} that $\alpha_k \leqslant 1, \forall\, k\geqslant  0$. Notice if $\gamma_k\geqslant  \mu$, then $\gamma_{k+1}=\left(1-\alpha_k\right)\gamma_k + \alpha_k\mu \geqslant  \mu$. Since $\gamma_0\geqslant  \mu$, we have $\gamma_k\geqslant  \mu, \forall\,k\geqslant 0$ by induction. 
\end{proof}

\begin{lemma}\label{lem:boundsearchsteps}
In any iteration of Algorithm~\ref{alg:iAPG}, Algorithm~\ref{alg:accellinesearch} and Algorithm~\ref{alg:seekstationary} will perform at most $\lceil \log_{\gamma_{\mathrm{dec}}}\frac{\underline{L}}{L_g}\rceil$ and $\lceil \log_{\gamma_{\mathrm{dec}}}\frac{\underline{L}}{L_g+L_h}\rceil$ iterations, respectively. Moreover, if  Algorithm~\ref{alg:iAPG} runs for $t$ iterations, Algorithm~\ref{alg:accellinesearch} and Algorithm~\ref{alg:seekstationary} will perform at most
$\left(\frac{\log\gamma_{\mathrm{inc}}}{\log\gamma^{-1}_{\mathrm{dec}}}\right)(t-1)+\frac{1}{\log\gamma^{-1}_{\mathrm{dec}}} \min\left\{\log\left(\frac{L_g}{\gamma^2_{\mathrm{dec}}\underline{L}}\right), \ \log\left(\frac{\gamma_{\mathrm{inc}}\eta_{-1}L_g}{\gamma_{\mathrm{dec}}}\right)\right\}$ 
and 
$t+1+\frac{1}{\log\gamma^{-1}_{\mathrm{dec}}}\log\left(\frac{L_g+L_h}{\underline{L}}\right)$
 iterations in total, respectively.
\end{lemma}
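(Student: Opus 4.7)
The plan is to split the argument into two parts: a per-call bound on the number of Repeat-body executions inside Algorithm~\ref{alg:accellinesearch} and Algorithm~\ref{alg:seekstationary}, and then a cumulative bound obtained by telescoping across the $t$ outer iterations of Algorithm~\ref{alg:iAPG}.

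For the per-call bounds, I would rely on the standard descent inequality implied by $L_g$-smoothness of $g$ (and $(L_g{+}L_h)$-smoothness of $g{+}h$). Each pass through the Repeat body in Algorithm~\ref{alg:accellinesearch} multiplies $\eta_k$ by $\gamma_{\mathrm{dec}}$, and the termination inequality is automatic whenever $\eta_k\leqslant 1/L_g$ by the descent lemma. Since the initialization ensures $\eta_k\leqslant 1/(\gamma_{\mathrm{dec}}\underline L)$ at the start, the number of passes needed to drive $\eta_k$ into the safe zone is at most $\lceil\log_{\gamma_{\mathrm{dec}}}(\underline L/L_g)\rceil$. Swapping $g$ for $g+h$ and $L_g$ for $L_g+L_h$ gives the analogous bound for Algorithm~\ref{alg:seekstationary}.

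For the cumulative bound on Algorithm~\ref{alg:accellinesearch}, I would let $n_k$ denote the number of Repeat-body executions during the $k$th outer iteration. The reset combined with the $n_k$ shrinkages by $\gamma_{\mathrm{dec}}$ yields $\eta_k\leqslant\gamma_{\mathrm{inc}}\eta_{k-1}\gamma_{\mathrm{dec}}^{n_k}$ for $k\geqslant 1$, and $\eta_0\leqslant\min\{1/(\gamma_{\mathrm{dec}}\underline L),\gamma_{\mathrm{inc}}\eta_{-1}\}\gamma_{\mathrm{dec}}^{n_0}$ at initialization. Iterating the first recursion down to $k=0$ gives the telescoped bound
\begin{equation*}
\eta_{t-1}\leqslant\gamma_{\mathrm{inc}}^{t-1}\min\!\left\{\tfrac{1}{\gamma_{\mathrm{dec}}\underline L},\ \gamma_{\mathrm{inc}}\eta_{-1}\right\}\gamma_{\mathrm{dec}}^{\sum_{k=0}^{t-1}n_k}.
\end{equation*}
Pairing this with the lower bound $\eta_{t-1}>\gamma_{\mathrm{dec}}/L_g$ from Lemma~\ref{lemma:cond-alpha-gamma}, taking logarithms, and dividing through by $\log\gamma_{\mathrm{dec}}^{-1}>0$ isolates $\sum_{k=0}^{t-1}n_k$ and, after simplification, matches the first stated bound.

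For Algorithm~\ref{alg:seekstationary} the bookkeeping is analogous but slightly cleaner: because each body pass begins with the inflation $\tilde\eta\leftarrow\tilde\eta/\gamma_{\mathrm{dec}}$ followed by the shrink $\tilde\eta\gets\gamma_{\mathrm{dec}}\tilde\eta$, a single body pass leaves $\tilde\eta$ unchanged relative to the input. Writing $m_k$ for the number of passes at outer iteration $k$, this gives $\tilde\eta_{k+1}=\tilde\eta_k\gamma_{\mathrm{dec}}^{m_k-1}$, which telescopes to $\tilde\eta_t=\tilde\eta_0\gamma_{\mathrm{dec}}^{\sum_{k=0}^{t-1}m_k-t}$. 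Combining $\tilde\eta_0\leqslant 1/\underline L$ with $\tilde\eta_t>\gamma_{\mathrm{dec}}/(L_g+L_h)$ from Lemma~\ref{lemma:cond-alpha-gamma} and taking logarithms yields the second cumulative bound. The main obstacle is purely combinatorial bookkeeping: one must track the asymmetric treatment of the initial step (the $\gamma_{\mathrm{inc}}$ inflation relative to $\eta_{-1}$ accrues only from $k=1$ onward, producing the exponent $t-1$ rather than $t$) and absorb the trailing $\gamma_{\mathrm{dec}}$ factors from the final body execution into the logarithms so as to recover the min form of the second term. No new convex analysis input is needed beyond the descent lemma and Lemma~\ref{lemma:cond-alpha-gamma}.
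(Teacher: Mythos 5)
Your proposal is correct and follows essentially the same route as the paper: the per-call bounds come from the step-size bracket $\frac{\gamma_{\mathrm{dec}}}{L_g}<\eta_k\leqslant\frac{1}{\underline{L}}$ (resp.\ its analogue for $\tilde\eta_k$) in Lemma~\ref{lemma:cond-alpha-gamma}, and the cumulative bounds come from the telescoping recursion $\eta_k\leqslant\gamma_{\mathrm{inc}}\eta_{k-1}\gamma_{\mathrm{dec}}^{n_k}$ combined with the lower bound on the final step size, which is precisely the argument of Lemma~6 in \cite{nesterov2013gradient} that the paper cites rather than reproduces. Your write-up simply fills in the details the paper delegates to that reference (one cosmetic slip: the inflation $\tilde\eta\leftarrow\tilde\eta/\gamma_{\mathrm{dec}}$ occurs once per call to Algorithm~\ref{alg:seekstationary}, not once per body pass, but your net relation $\tilde\eta_{k+1}=\tilde\eta_k\gamma_{\mathrm{dec}}^{m_k-1}$ is the correct one).
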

\begin{proof}
The first conclusion is from \eqref{eq:cond-alpha-gamma}. The second one can be proved similarly as Lemma 6 in \cite{nesterov2013gradient}. 
\end{proof}

\begin{lemma}\label{lem:prod-alpha}
Let $\tau= \frac{L_g}{\gamma_{\mathrm{dec}} \underline{L} }, \ \kappa = \frac{L_g}{\gamma_\mathrm{dec}\mu}$ and $\alpha_k$ be generated by Algorithm~\ref{alg:iAPG}. It holds that,  for $k\geqslant 0$, 
\vspace{-0.2cm}
\begin{align}
\textstyle \frac{1}{(k+1/\alpha_0)\sqrt{\tau}}\leqslant \alpha_k\leqslant \frac{2\sqrt\tau}{k+2}, \text{ if }\mu = 0;\quad 
\sqrt{\frac{1}{\kappa}} \leqslant\alpha_k,  \text{ if }\mu>0, \label{eq:rate-alpha-mu}
\vspace{-0.2cm}
\end{align}
\end{lemma}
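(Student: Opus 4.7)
The plan is to exploit the recursion $\gamma_{k+1} = \alpha_k^2/\eta_k = (1-\alpha_k)\gamma_k + \alpha_k\mu$ from Algorithm~\ref{alg:accellinesearch}, together with the step-size bounds $\gamma_{\mathrm{dec}}/L_g < \eta_k \leq 1/\underline{L}$ and the properties $\gamma_k \geq \mu$, $\alpha_k \leq 1$ already established in Lemma~\ref{lemma:cond-alpha-gamma}. The strongly-convex case $\mu>0$ is immediate: a one-line induction using the recursion gives $\gamma_{k+1}\geq\mu$, and then $\alpha_k^2=\eta_k\gamma_{k+1}\geq\mu\eta_k>\mu\gamma_{\mathrm{dec}}/L_g = 1/\kappa$, which is exactly the claim for that regime.

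For $\mu=0$ the recursion becomes $\gamma_{k+1}=(1-\alpha_k)\gamma_k$, but the classical Nesterov telescoping of $1/\alpha_k-1/\alpha_{k-1}$ does not apply verbatim because $\eta_k$ fluctuates under line search. The key step will be to introduce the auxiliary sequence $s_k := \sqrt{\eta_k}/\alpha_k = 1/\sqrt{\gamma_{k+1}}$, which absorbs the $\eta_k$-variability. A short manipulation of $\gamma_{k+1}=(1-\alpha_k)\gamma_k$ produces the clean identity
\begin{equation*}
s_k^2 - s_{k-1}^2 \;=\; s_k\sqrt{\eta_k}, \qquad\text{hence}\qquad s_k - s_{k-1} \;=\; \frac{s_k\sqrt{\eta_k}}{s_k+s_{k-1}}.
\end{equation*}
Since the right-hand side is positive, $s_k$ is nondecreasing, so $s_k/(s_k+s_{k-1})\in[1/2,1]$. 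Combined with $\sqrt{\gamma_{\mathrm{dec}}/L_g}<\sqrt{\eta_k}\leq 1/\sqrt{\underline L}$, this sandwiches the one-step increment as $\tfrac{1}{2}\sqrt{\gamma_{\mathrm{dec}}/L_g}\leq s_k-s_{k-1}\leq 1/\sqrt{\underline L}$. Telescoping from $0$ to $k$ then bounds $s_k$ on both sides, and converting back via $1/\alpha_k = s_k/\sqrt{\eta_k}$, using the step-size bounds once more in the opposite direction for each inequality, yields the two claimed bounds. The boundary term $s_0=\sqrt{\eta_0}/\alpha_0$ contributes exactly the ``$+2$'' and ``$+1/\alpha_0$'' offsets once one applies $\eta_0\leq 1/\underline L$, $\eta_0>\gamma_{\mathrm{dec}}/L_g$, and $\alpha_0\leq 1$.

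The main obstacle I anticipate is purely bookkeeping: since $\sqrt{\eta_k}$ appears both inside $s_k$ and in the final conversion $1/\alpha_k=s_k/\sqrt{\eta_k}$, one must use the correct direction of the step-size inequality at each stage. The factor $\sqrt{L_g/(\gamma_{\mathrm{dec}}\underline L)}=\sqrt{\tau}$ appearing in both bounds is exactly this accumulated slack between the minimum and maximum step sizes that the line search may produce, and getting $s_0$ to fit cleanly inside the advertised offsets is the only place where a little care with the constants is needed.
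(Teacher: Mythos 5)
Your proposal is correct. The $\mu>0$ case is identical to the paper's. For $\mu=0$, your argument is the paper's argument after the change of variables $s_k=\sqrt{\eta_k}/\alpha_k=1/\sqrt{\gamma_{k+1}}$: your sandwich $\tfrac12\sqrt{\eta_k}\leqslant s_k-s_{k-1}\leqslant\sqrt{\eta_k}$ is exactly the paper's pair of one-step inequalities $\tfrac{1}{\alpha_k}\geqslant\tfrac12+\tfrac{1}{\alpha_{k-1}}\sqrt{\eta_{k-1}/\eta_k}$ and $\tfrac{1}{\alpha_k}\leqslant 1+\tfrac{1}{\alpha_{k-1}}\sqrt{\eta_{k-1}/\eta_k}$ rewritten, and both proofs finish by invoking the same bounds $\gamma_{\mathrm{dec}}/L_g<\eta_j\leqslant 1/\underline{L}$ and $\alpha_0\leqslant 1$. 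The only genuine difference is how the increment bounds are obtained and accumulated: the paper applies the quadratic formula to $\tfrac{1}{\alpha_k^2}-\tfrac{1}{\alpha_k}=\tfrac{1}{\alpha_{k-1}^2}\tfrac{\eta_{k-1}}{\eta_k}$ together with $\sqrt{a+b}\leqslant\sqrt a+\sqrt b$, and then unrolls a recursion whose terms carry products of ratios $\sqrt{\eta_j/\eta_k}$, whereas you get the same bounds from the exact identity $s_k^2-s_{k-1}^2=s_k\sqrt{\eta_k}$ (which indeed follows from $\gamma_{k+1}=(1-\alpha_k)\gamma_k$ and $\alpha_k=\sqrt{\eta_k\gamma_{k+1}}$) plus the monotonicity of $s_k$, after which the telescoping is purely additive. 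This is a cleaner bookkeeping device but not a different proof; the constants and offsets work out exactly as you describe.
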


\vspace{-0.2cm}
\begin{proof}
When $\mu=0$, Line 4 of Algorithm~\ref{alg:accellinesearch} implies $\frac{\alpha_k^2}{\eta_k}=(1-\alpha_k)\gamma_k=(1-\alpha_k)\frac{\alpha_{k-1}^2}{\eta_{k-1}}$, and thus $\frac{1}{\alpha_k^2}-\frac{1}{\alpha_k} = \frac{1}{\alpha_{k-1}^2}\frac{\eta_{k-1}}{\eta_k}$ for $k\geqslant 1$ in Algorithm~\ref{alg:iAPG}. By the quadratic formula, it follows that 
\vspace{-0.1cm}
\begin{equation}\label{eq:rec-alpha}
\textstyle \frac{1}{\alpha_k} = \frac{1+\sqrt{1+\frac{4}{\alpha_{k-1}^2}\frac{\eta_{k-1}}{\eta_k}}}{2} \geqslant  \frac{1}{2}+\frac{1}{\alpha_{k-1}}\sqrt{\frac{\eta_{k-1}}{\eta_k}}.
\vspace{-0.1cm}
\end{equation}
Recursively applying \eqref{eq:rec-alpha} gives
\vspace{-0.1cm}
\begin{equation}\label{eq:rec-alpha-2}
\textstyle \frac{1}{\alpha_k} \geqslant  \frac{1}{2}\left(1+\sqrt{\frac{\eta_{k-1}}{\eta_k}}\right)+\frac{1}{\alpha_{k-2}}\sqrt{\frac{\eta_{k-2}}{\eta_k}}\geqslant \ldots\geqslant  \frac{1}{2}\sum_{j=1}^k\sqrt{\frac{\eta_{j}}{\eta_k}}+\frac{1}{\alpha_0}\sqrt{\frac{\eta_{0}}{\eta_k}}.
\vspace{-0.1cm}
\end{equation}
According to \eqref{eq:cond-alpha-gamma}, it holds that $\frac{\eta_j}{\eta_k} > \frac{1}{\tau}$ for $k$ and $j\geqslant 0$, and thus \eqref{eq:rec-alpha-2} implies 
$\textstyle \frac{1}{\alpha_k}\geqslant  \big(\frac{k}{2} + \frac{1}{\alpha_0}\big)\sqrt{\frac{1}{\tau}}\geqslant  \frac{k+2}{2} \sqrt{\frac{1}{\tau}},$ 
where the second inequality is because $\alpha_0 \leqslant 1$ by Lemma~\ref{lemma:cond-alpha-gamma}. In addition, from the equality in \eqref{eq:rec-alpha} and the fact that $\sqrt{a+b}\leqslant \sqrt{a}+\sqrt{b}$, we have
$\textstyle \frac{1}{\alpha_k}  \leqslant 1+\frac{1}{\alpha_{k-1}}\sqrt{\frac{\eta_{k-1}}{\eta_k}}$. 
Recursively applying this inequality 
gives
\vspace{-0.1cm}
\begin{equation}\label{eq:inc-alpha-2}
	\textstyle \frac{1}{\alpha_k} \leqslant  \left(1+\sqrt{\frac{\eta_{k-1}}{\eta_k}}\right)+\frac{1}{\alpha_{k-2}}\sqrt{\frac{\eta_{k-2}}{\eta_k}}\leqslant \ldots\leqslant  \sum_{j=1}^k\sqrt{\frac{\eta_{j}}{\eta_k}}+\frac{1}{\alpha_0}\sqrt{\frac{\eta_{0}}{\eta_k}}.
	\vspace{-0.1cm}
\end{equation}
By \eqref{eq:cond-alpha-gamma} again, we have $\frac{\eta_j}{\eta_k}<{\tau}$ for all $k$ and $j\geqslant 0$,  and thus \eqref{eq:inc-alpha-2} implies 
$	\textstyle \frac{1}{\alpha_k}\leqslant  \big(k + \frac{1}{\alpha_0}\big)\sqrt{\tau}.$ 

When $\mu>0$, we have from Lemma~\ref{lemma:cond-alpha-gamma} that $\gamma_{k+1}\geqslant \mu$. Hence, from \eqref{eq:cond-alpha-gamma} and the updating equation of $\gamma_{k+1}$, it follows that $\alpha_k = \sqrt{\eta_k\gamma_{k+1}} \geqslant  \sqrt{\frac{1}{\kappa}}$. Therefore, 
we obtain the desired results. 
\end{proof}

%

Next, we establish the relationship between two consecutive iterates in Algorithm~\ref{alg:iAPG}.
\begin{proposition}
Let $\{(\vx^{(k)},\vz^{(k)},\alpha_k,\gamma_k)\}$ be generated by Algorithm~\ref{alg:iAPG}. It holds that,  for $k\geqslant 0$, 
\vspace{-0.1cm}
\begin{equation}
\label{eq:main}
\textstyle F(\vx^{(k+1)})-F^*+\frac{\gamma_{k+1}}{2}\|\vx^*-\vz^{(k+1)}\|^2
\leqslant   (1-\alpha_k)\left[F(\vx^{(k)})-F^*+\frac{\gamma_{k}}{2}\|\vx^*-\vz^{(k)}\|^2\right] + \vareps_k \alpha_k\|\vx^*-\vz^{(k+1)}\|.
\vspace{-0.1cm}
\end{equation}
\end{proposition}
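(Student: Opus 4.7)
The plan is to derive \eqref{eq:main} by first establishing a single ``three-point'' inequality valid at an arbitrary reference point $\vu\in\RR^n$, and then applying it at two specific choices of $\vu$ with suitable convex weights. By \eqref{eq:xkprecision} there exist $\vxi^{(k)}$ with $\|\vxi^{(k)}\|\leqslant\vareps_k$ and $\vs^{(k)}\in\partial H(\vx^{(k+1)})$ satisfying $\vxi^{(k)}=\nabla g(\vy^{(k)})+\tfrac{1}{\eta_k}(\vx^{(k+1)}-\vy^{(k)})+\vs^{(k)}$. I combine three ingredients: the line-search stopping criterion $g(\vx^{(k+1)})\leqslant g(\vy^{(k)})+\langle\nabla g(\vy^{(k)}),\vx^{(k+1)}-\vy^{(k)}\rangle+\tfrac{1}{2\eta_k}\|\vx^{(k+1)}-\vy^{(k)}\|^2$, the $\mu$-strong convexity of $g$ evaluated at $\vu$, and the subgradient inequality $H(\vx^{(k+1)})\leqslant H(\vu)+\langle\vs^{(k)},\vx^{(k+1)}-\vu\rangle$. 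Eliminating $\vs^{(k)}$ via the identity above and completing the square using $\|\va\|^2-2\langle\va,\vb\rangle=\|\va-\vb\|^2-\|\vb\|^2$ with $\va=\vx^{(k+1)}-\vy^{(k)}$ and $\vb=\vx^{(k+1)}-\vu$, I obtain
\begin{equation*}
F(\vx^{(k+1)}) \leqslant F(\vu) - \langle\vxi^{(k)},\vx^{(k+1)}-\vu\rangle + \tfrac{1-\mu\eta_k}{2\eta_k}\|\vu-\vy^{(k)}\|^2 - \tfrac{1}{2\eta_k}\|\vx^{(k+1)}-\vu\|^2,
\end{equation*}
where $1-\mu\eta_k\geqslant 0$ because $\eta_k\leqslant 1/\underline{L}\leqslant 1/\mu$ by Lemma~\ref{lemma:cond-alpha-gamma} (the restriction is vacuous when $\mu=0$).

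Next, I apply this bound with $\vu=\vx^*$ weighted by $\alpha_k$ and with $\vu=\vx^{(k)}$ weighted by $1-\alpha_k$, then sum. The linear term reduces via $\vx^{(k+1)}-(1-\alpha_k)\vx^{(k)}-\alpha_k\vx^*=\alpha_k(\vz^{(k+1)}-\vx^*)$ (from Line~5 of Algorithm~\ref{alg:iAPG}) to $-\alpha_k\langle\vxi^{(k)},\vz^{(k+1)}-\vx^*\rangle$, which Cauchy--Schwarz controls by $\alpha_k\vareps_k\|\vx^*-\vz^{(k+1)}\|$. Using the convex-combination identity $\alpha\|\va\|^2+(1-\alpha)\|\vb\|^2=\|\alpha\va+(1-\alpha)\vb\|^2+\alpha(1-\alpha)\|\va-\vb\|^2$, the final quadratic becomes $-\tfrac{\alpha_k^2}{2\eta_k}\|\vz^{(k+1)}-\vx^*\|^2-\tfrac{\alpha_k(1-\alpha_k)}{2\eta_k}\|\vx^*-\vx^{(k)}\|^2$; the first piece equals exactly $-\tfrac{\gamma_{k+1}}{2}\|\vx^*-\vz^{(k+1)}\|^2$ by $\gamma_{k+1}=\alpha_k^2/\eta_k$.

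The delicate step is controlling the residual $\|\alpha_k\vx^*+(1-\alpha_k)\vx^{(k)}-\vy^{(k)}\|^2$ that the same identity also produces from the $\|\vu-\vy^{(k)}\|^2$ terms. Substituting the explicit $\vy^{(k)}=(\alpha_k\gamma_k\vz^{(k)}+\gamma_{k+1}\vx^{(k)})/(\gamma_k+\alpha_k\mu)$ and using $\gamma_{k+1}=(1-\alpha_k)\gamma_k+\alpha_k\mu$ yields
\begin{equation*}
\alpha_k\vx^*+(1-\alpha_k)\vx^{(k)}-\vy^{(k)} = \tfrac{\alpha_k\gamma_k(\vx^*-\vz^{(k)})+\alpha_k^2\mu(\vx^*-\vx^{(k)})}{\gamma_k+\alpha_k\mu}.
\end{equation*}
Jensen's inequality with weights $\gamma_k/(\gamma_k+\alpha_k\mu)$ and $\alpha_k\mu/(\gamma_k+\alpha_k\mu)$ bounds its squared norm, and the key numerical identity $(1-\mu\eta_k)\gamma_{k+1}=(1-\alpha_k)(\gamma_k+\alpha_k\mu)$---a direct consequence of $\alpha_k^2=\eta_k\gamma_{k+1}$ and the update formula for $\gamma_{k+1}$---turns the coefficient of $\|\vx^*-\vz^{(k)}\|^2$ into exactly $\tfrac{(1-\alpha_k)\gamma_k}{2}$. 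The main obstacle will be verifying that the three residual $\|\vx^*-\vx^{(k)}\|^2$ contributions, namely $+\tfrac{(1-\alpha_k)\alpha_k\mu}{2}$ from this Jensen bound, $+\tfrac{(1-\mu\eta_k)\alpha_k(1-\alpha_k)}{2\eta_k}$ from the strong-convexity residual, and $-\tfrac{\alpha_k(1-\alpha_k)}{2\eta_k}$ from the last quadratic, cancel exactly (the last two combine to $-\tfrac{\mu\alpha_k(1-\alpha_k)}{2}$, which kills the Jensen contribution). After this cancellation, collecting terms and rearranging gives precisely \eqref{eq:main}.
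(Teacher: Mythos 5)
Your argument is correct and follows essentially the same estimate-sequence route as the paper's proof: the inexact proximal-gradient descent inequality evaluated at the convex combination $\alpha_k\vx^*+(1-\alpha_k)\vx^{(k)}$, the identity $\vx^{(k+1)}-\alpha_k\vx^*-(1-\alpha_k)\vx^{(k)}=\alpha_k(\vz^{(k+1)}-\vx^*)$, Cauchy--Schwarz on the $\vareps_k$-error term, and Jensen's inequality applied to a decomposition of $\alpha_k\vx^*+(1-\alpha_k)\vx^{(k)}-\vy^{(k)}$. The only differences are organizational --- you anchor that decomposition at $\vx^{(k)}$ rather than $\vy^{(k)}$, so the $\mu$-residual cancels against $\|\vx^*-\vx^{(k)}\|^2$ terms via the identity $(1-\mu\eta_k)\gamma_{k+1}=(1-\alpha_k)(\gamma_k+\alpha_k\mu)$ instead of being absorbed by the strong convexity of $g$ at $\vx^*$ as the paper does, and this cancellation does check out (also, the $\langle\vxi^{(k)},\vx^{(k+1)}-\vu\rangle$ term in your displayed three-point bound should carry a $+$ sign given your definition of $\vxi^{(k)}$, but this is immaterial once Cauchy--Schwarz is applied).
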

\vspace{-0.2cm}

\begin{proof}	
Let $\vx^*$ be an optimal solution of \eqref{eq:cvx-composite} and define 
$\widehat\vx^{(k)} =\alpha_k\vx^*+(1-\alpha_k)\vx^{(k)} \in\dom(H)$. 
Then
	\begin{eqnarray}
		\label{xk1}
	\widehat\vx^{(k)} - \vy^{(k)} &=&\alpha_k(\vx^*-\vy^{(k)})+(1-\alpha_k)(\vx^{(k)}-\vy^{(k)}).
	\end{eqnarray}
By the updating equation of $\vy^{(k)}$ in Line~5 of Algorithm~\ref{alg:accellinesearch}, it holds that
	$\vz^{(k)}-\vy^{(k)} = -\frac{\gamma_{k+1}}{\alpha_k\gamma_k}\left(\vx^{(k)}-\vy^{(k)}\right)$. This together with	
	\eqref{xk1} gives
	\vspace{-0.1cm}
	\begin{eqnarray}
	\nonumber
	\widehat\vx^{(k)} - \vy^{(k)} &=& \textstyle \alpha_k (\vx^*-\vy^{(k)})- \frac{\alpha_k(1-\alpha_k)\gamma_{k}}{\gamma_{k+1}}(\vz^{(k)}-\vy^{(k)})
	=\alpha_k \left[\vx^*-\vy^{(k)}- \frac{(1-\alpha_k)\gamma_{k}}{\gamma_{k+1}}(\vz^{(k)}-\vy^{(k)})\right]\\\label{xk2}
	&=&\textstyle \alpha_k \left[\vx^*- \frac{(1-\alpha_k)\gamma_{k}}{\gamma_{k+1}}\vz^{(k)}-\frac{\alpha_k\mu}{\gamma_{k+1}}\vy^{(k)}\right],
	\vspace{-0.1cm}
	\end{eqnarray}
	where the last equality follows from the updating equation of $\gamma_{k+1}$.
	
	According to \eqref{eq:xkprecision}, there exists $\ve^{(k)}\in\mathbb{R}^n$ such that $\|\ve^{(k)}\|\leqslant \vareps_k$ and 
	$\ve^{(k)}-\nabla g(\vy^{(k)})-\frac{1}{\eta_k}(\vx^{(k+1)}-\vy^{(k)})\in\partial H(\vx^{(k+1)})$. By the convexity of $H$, we have
	\vspace{-0.1cm}
	\begin{align*}
		H(\vx^{(k+1)}) \leqslant &~ \textstyle H(\widehat\vx^{(k)}) + \big\langle \ve^{(k)} - \nabla g(\vy^{(k)})-\frac{1}{\eta_k}(\vx^{(k+1)}-\vy^{(k)}), \vx^{(k+1)} - \widehat\vx^{(k)}\big\rangle, 
		\vspace{-0.1cm}
	\end{align*} 
which, by the fact that $\langle \vu, \vv\rangle = \frac{1}{2}\big(\|\vu\|^2 + \|\vv\|^2 - \|\vu-\vv\|^2\big)$,  implies 
\vspace{-0.1cm}
\begin{align*}
	H(\vx^{(k+1)}) \leqslant &~ \textstyle H(\widehat\vx^{(k)}) + \left\langle \ve^{(k)} - \nabla g(\vy^{(k)}) , \vx^{(k+1)} - \widehat\vx^{(k)}\right\rangle \\
	&~ \textstyle - \frac{1}{2\eta_k}\left(\|\vx^{(k+1)}-\vy^{(k)}\|^2 + \|\vx^{(k+1)} - \widehat\vx^{(k)}\|^2 - \|\widehat\vx^{(k)} -\vy^{(k)}\|^2\right), \\
	\leqslant &~ \textstyle H(\widehat\vx^{(k)}) + \left\langle   \nabla g(\vy^{(k)}) ,  \widehat\vx^{(k)}-\vx^{(k+1)} \right\rangle +\vareps_k \|\vx^{(k+1)} - \widehat\vx^{(k)}\|\\
	&~ \textstyle - \frac{1}{2\eta_k}\left(\|\vx^{(k+1)}-\vy^{(k)}\|^2 + \|\vx^{(k+1)} - \widehat\vx^{(k)}\|^2 - \|\widehat\vx^{(k)} -\vy^{(k)}\|^2\right).
\vspace{-0.1cm}
\end{align*}	
	From the inequality above and the stopping condition of Algorithm~\ref{alg:accellinesearch}, we have
	\begin{eqnarray*}
		F(\vx^{(k+1)})
		&\leqslant & \textstyle g(\vy^{(k)})+\big\langle\nabla g(\vy^{(k)}), \vx^{(k+1)}-\vy^{(k)}\big\rangle
		+\frac{1}{2\eta_k}\big\|\vx^{(k+1)}-\vy^{(k)}\big\|^2+H(\vx^{(k+1)}) \\
		&\leqslant & \textstyle g(\vy^{(k)})+ \big\langle\nabla g(\vy^{(k)}),\widehat\vx^{(k)}-\vy^{(k)}\big\rangle
		 +\frac{1}{2\eta_k}\big\|\widehat\vx^{(k)}-\vy^{(k)}\big\|^2 +H(\widehat\vx^{(k)} )
		 -\frac{1}{2\eta_k}\big\|\widehat\vx^{(k)}-\vx^{(k+1)}\big\|^2 \\ 
		 & &+ \vareps_k \|\vx^{(k+1)} - \widehat\vx^{(k)}\|.
	\end{eqnarray*}
Applying \eqref{xk1} to the above inequality, we have
\vspace{-0.1cm}
\begin{eqnarray*}
	F(\vx^{(k+1)})
	&\leqslant & g(\vy^{(k)})+ \big\langle\nabla g(\vy^{(k)}),\alpha_k(\vx^*-\vy^{(k)})+(1-\alpha_k)(\vx^{(k)}-\vy^{(k)})\big\rangle \\
	&& \textstyle +\frac{1}{2\eta_k}\big\|\widehat\vx^{(k)}-\vy^{(k)}\big\|^2 +H(\alpha_k\vx^* +(1-\alpha_k)\vx^{(k)})-\frac{1}{2\eta_k}\big\|\widehat\vx^{(k)}-\vx^{(k+1)}\big\|^2 + \vareps_k \|\vx^{(k+1)} - \widehat\vx^{(k)}\|.
	\vspace{-0.1cm}
	\end{eqnarray*}
By the fact that $\alpha_k\in (0,1]$ from Lemma~\ref{lemma:cond-alpha-gamma}, \eqref{xk2} and the convexity of $H$, we have
\vspace{-0.1cm}
	\begin{eqnarray}\label{eq:main0-0}
	F(\vx^{(k+1)})
	&\leqslant & (1-\alpha_k)\big[g(\vy^{(k)})+ \langle\nabla g(\vy^{(k)}),\vx^{(k)}-\vy^{(k)}\rangle+H(\vx^{(k)})\big] \cr
	&& + \alpha_k\big[g(\vy^{(k)})+ \left\langle\nabla g(\vy^{(k)}),\vx^*-\vy^{(k)}\right\rangle+H(\vx^*)\big] \\
	&& \textstyle  +\frac{\alpha_k^2}{2\eta_k}\big\| \vx^*- \frac{(1-\alpha_k)\gamma_{k}}{\gamma_{k+1}}\vz^{(k)}-\frac{\alpha_k\mu}{\gamma_{k+1}}\vy^{(k)}\big\|^2 -\frac{1}{2\eta_k}\big\|\widehat\vx^{(k)}-\vx^{(k+1)}\big\|^2 + \vareps_k \|\vx^{(k+1)} - \widehat\vx^{(k)}\|.\nonumber
	\vspace{-0.1cm}
\end{eqnarray}
Since $\gamma_{k+1}=\alpha^2_k/\eta_k=\left(1-\alpha_k\right)\gamma_k + \alpha_k \mu$,  we have from the convexity of $\|\cdot\|^2$ that
\vspace{-0.1cm}
	\begin{eqnarray*}
\textstyle 	\frac{\alpha_k^2}{2\eta_k}\big\| \vx^*- \frac{(1-\alpha_k)\gamma_{k}}{\gamma_{k+1}}\vz^{(k)}-\frac{\alpha_k\mu}{\gamma_{k+1}}\vy^{(k)}\big\|^2
	&=& \textstyle \frac{\gamma_{k+1}}{2}\big\|\vx^*- \frac{(1-\alpha_k)\gamma_{k}}{\gamma_{k+1}}\vz^{(k)}-\frac{\alpha_k\mu}{\gamma_{k+1}}\vy^{(k)}\big\|^2 \\
	&\leqslant & \textstyle \frac{(1-\alpha_k)\gamma_k}{2}\|\vx^*-\vz^{(k)}\|^2+\frac{\alpha_k\mu}{2}\|\vx^*-\vy^{(k)}\|^2,
	\vspace{-0.2cm}
	\end{eqnarray*}
which, together with \eqref{eq:main0-0} and the $\mu$-strong convexity of $g$, implies
\vspace{-0.2cm}
	\begin{eqnarray}
	\label{eq:main0}
	F(\vx^{(k+1)})
	&\leqslant &\textstyle (1-\alpha_k)\big[g(\vy^{(k)})+ \langle\nabla g(\vy^{(k)}),\vx^{(k)}-\vy^{(k)}\rangle+H(\vx^{(k)})+\frac{\gamma_k}{2}\|\vx^*-\vz^{(k)}\|^2\big] \nonumber\\
	&& \textstyle + \alpha_k\big[g(\vy^{(k)})+ \langle\nabla g(\vy^{(k)}),\vx^*-\vy^{(k)}\rangle+H(\vx^*)+\frac{\mu}{2}\|\vx^*-\vy^{(k)}\|^2\big]\nonumber\\
	&& \textstyle  -\frac{1}{2\eta_k}\big\|\widehat\vx^{(k)}-\vx^{(k+1)}\big\|^2 + \vareps_k \|\vx^{(k+1)} - \widehat\vx^{(k)}\| \nonumber\\
	&\leqslant & \textstyle (1-\alpha_k)\big[ F(\vx^{(k)})+ \frac{\gamma_k}{2}\|\vx^*-\vz^{(k)}\|^2\big]+ \alpha_kF(\vx^*)-\frac{1}{2\eta_k}\big\|\widehat\vx^{(k)}-\vx^{(k+1)}\big\|^2
	+ \vareps_k \|\vx^{(k+1)} - \widehat\vx^{(k)}\|.
	\vspace{-0.2cm}
	\end{eqnarray}
By the definitions of $\vz^{(k+1)}$ and $\widehat\vx^{(k)}$, it holds that
\vspace{-0.1cm}
\begin{equation}\label{eq:diff-term-hatk}
\|\widehat\vx^{(k)}-\vx^{(k+1)}\|^2=\|\alpha_k\vx^*+(1-\alpha_k)\vx^{(k)}-\vx^{(k+1)}\|^2=\alpha_k^2\|\vx^*-\vz^{(k+1)}\|^2.
\vspace{-0.1cm}
\end{equation}
Apply the equality in \eqref{eq:diff-term-hatk} to \eqref{eq:main0} and use $\gamma_{k+1}=\alpha^2_k/\eta_k$ to obtain the desired inequality.
\end{proof}

We apply \eqref{eq:main} recursively to derive the convergence rate of Algorithm~\ref{alg:iAPG} for the case of $\mu>0$ as follows. The case of $\mu=0$ will be analyzed in section~\ref{sec:rate-compl-cvx}. 
\begin{theorem}
	\label{thm:main_corollary}
Suppose $\mu>0$. For any $c\in [0,1)$,
Algorithm~\ref{alg:iAPG} guarantees that, for $k \geqslant 0$, 
\vspace{-0.2cm}
\begin{eqnarray}\label{eq:rate-psi-k}
	\psi_{k+1} &\leqslant & \textstyle \prod_{j=0}^k (1-c\alpha_j)\left(\psi_0 + \frac{\sqrt\kappa}{2(1-c)^2\underline{L}}\sum_{t=0}^{k}\frac{\vareps_t^2}{\prod_{j=0}^{t-1} (1-c\alpha_j)}\right),
	\vspace{-0.2cm}
\end{eqnarray}
where $\psi_k := \textstyle F(\vx^{(k)})-F^*+(1-(1-c)\alpha_{k})\frac{\gamma_{k}}{2}\|\vx^*-\vz^{(k)}\|^2,\forall\, k\geqslant 0,$
and $\kappa$ is defined in Lemma~\ref{lem:prod-alpha}. In addition, when $\mu>0$ and $\vareps_k=0$ for all $k$, Algorithm~\ref{alg:iAPG} guarantees that 
\vspace{-0.1cm}
\begin{equation}\label{eq:rate-scvx-no-noise}
\textstyle 	F(\vx^{(k+1)})-F^*+\frac{\gamma_{k+1}}{2}\|\vx^*-\vz^{(k+1)}\|^2 \leqslant \textstyle \big(1-\sqrt{\frac{1}{\kappa}}\big)^{k+1} \left(F(\vx^{(0)})-F^*+\frac{\gamma_{0}}{2}\|\vx^*-\vz^{(0)}\|^2\right), \forall\, k\geqslant 0.
\vspace{-0.1cm}
\end{equation}
\end{theorem}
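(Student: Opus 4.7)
The plan is to derive both claims from the one-step descent inequality \eqref{eq:main}, which can be written in the compact form $\Delta_{k+1}\leqslant(1-\alpha_k)\Delta_k+\vareps_k\alpha_k D_{k+1}$ where $\Delta_k := F(\vx^{(k)})-F^*+\frac{\gamma_k}{2}\|\vx^*-\vz^{(k)}\|^2$ and $D_k:=\|\vx^*-\vz^{(k)}\|$. Both $\psi_k$ and the recurrence target $(1-c\alpha_k)\psi_k$ are obtained from $\Delta_k$ by an algebraic rearrangement, so the proof is essentially a careful bookkeeping of how the extra budget $(1-(1-c)\alpha_{k+1})\frac{\gamma_{k+1}}{2}D_{k+1}^2$ in $\psi_{k+1}$ can absorb the perturbation $\vareps_k\alpha_k D_{k+1}$.

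First I would record the key algebraic identity
\[
(1-c\alpha_k)\bigl(1-(1-c)\alpha_k\bigr)=(1-\alpha_k)+c(1-c)\alpha_k^2\geqslant 1-\alpha_k,
\]
which together with the fact that $F(\vx^{(k)})-F^*\geqslant 0$ and $(1-c\alpha_k)\geqslant(1-\alpha_k)$ yields $(1-\alpha_k)\Delta_k\leqslant(1-c\alpha_k)\psi_k$. This converts the raw recursion into a form centered on $\psi_k$ and is the only place the unusual appearance of $\alpha_k$ (rather than $\alpha_{k-1}$) inside the definition of $\psi_k$ plays a role.

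Next I would split the quadratic term on the left of \eqref{eq:main} as
\[
\tfrac{\gamma_{k+1}}{2}D_{k+1}^2 = \bigl(1-(1-c)\alpha_{k+1}\bigr)\tfrac{\gamma_{k+1}}{2}D_{k+1}^2 + (1-c)\alpha_{k+1}\tfrac{\gamma_{k+1}}{2}D_{k+1}^2,
\]
and apply Young's inequality $ab\leqslant\frac{a^2}{2\lambda}+\frac{\lambda}{2}b^2$ with $a=\vareps_k$, $b=\alpha_kD_{k+1}$ and $\lambda$ chosen so that $\frac{\lambda\alpha_k^2}{2}=(1-c)\alpha_{k+1}\frac{\gamma_{k+1}}{2}$, i.e.\ $\lambda=(1-c)\alpha_{k+1}\gamma_{k+1}/\alpha_k^2=(1-c)\alpha_{k+1}/\eta_k$, where I use the identity $\alpha_k^2/\gamma_{k+1}=\eta_k$ from Algorithm~\ref{alg:accellinesearch}. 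The second Young term exactly cancels the corresponding piece of $\frac{\gamma_{k+1}}{2}D_{k+1}^2$, and combined with the estimate from the previous paragraph one arrives at
\[
\psi_{k+1}\leqslant(1-c\alpha_k)\psi_k+\frac{\vareps_k^2\,\eta_k}{2(1-c)\alpha_{k+1}}.
\]
Invoking $\eta_k\leqslant 1/\underline{L}$ from Lemma~\ref{lemma:cond-alpha-gamma} and $\alpha_{k+1}\geqslant 1/\sqrt{\kappa}$ from \eqref{eq:rate-alpha-mu} (and absorbing a further $(1-c)$ factor if one wishes to match the theorem's stated constant) bounds the error term by $\frac{\sqrt{\kappa}\,\vareps_k^2}{2(1-c)^2\underline{L}}$. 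Telescoping this linear recurrence from $j=0$ to $k$ then delivers \eqref{eq:rate-psi-k} after pulling the common factor $\prod_{j=0}^k(1-c\alpha_j)$ out.

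For the noise-free bound \eqref{eq:rate-scvx-no-noise}, no Young step is needed: setting $\vareps_k=0$ in \eqref{eq:main} gives $\Delta_{k+1}\leqslant(1-\alpha_k)\Delta_k$, and the lower bound $\alpha_k\geqslant\sqrt{1/\kappa}$ from Lemma~\ref{lem:prod-alpha} together with a direct telescoping yields the claimed geometric rate. The main technical obstacle lies in step~(ii), specifically choosing the Young weight $\lambda$ so that the residual quadratic matches the extra term carried by the tilted potential $\psi_{k+1}$; once that alignment is made, everything else is routine algebra and telescoping.
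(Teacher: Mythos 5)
Your proposal is correct and follows essentially the same route as the paper's proof: the same Young weight $\lambda=(1-c)\alpha_{k+1}/\eta_k$ chosen to cancel the $(1-c)\alpha_{k+1}\tfrac{\gamma_{k+1}}{2}\|\vx^*-\vz^{(k+1)}\|^2$ piece, the same identity $(1-c\alpha_k)\bigl(1-(1-c)\alpha_k\bigr)\geqslant 1-\alpha_k$ combined with $F(\vx^{(k)})-F^*\geqslant 0$, the bounds $\eta_k\leqslant 1/\underline{L}$ and $\alpha_{k+1}\geqslant 1/\sqrt{\kappa}$, and a telescoping, with the noise-free case handled identically. One bookkeeping remark: the second factor of $(1-c)$ in the stated constant is not spent on enlarging the per-step error term (doing so still leaves the telescoped sum with denominator $\prod_{j=0}^{t}(1-c\alpha_j)$ rather than the theorem's $\prod_{j=0}^{t-1}(1-c\alpha_j)$); instead one keeps the per-step error at $\frac{\sqrt{\kappa}}{2(1-c)\underline{L}}\vareps_k^2$ and, after telescoping, shifts the product index using $1-c\alpha_t\geqslant 1-c$ (valid since $\alpha_t\leqslant 1$), which is exactly how the paper closes the argument.
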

\vspace{-0.2cm}
\begin{proof}
By the Young's inequality, we have that for any $c\in[0,1)$, 
\vspace{-0.2cm}
$$\textstyle \vareps_k\alpha_k\|\vx^* - \vz^{(k+1)}\| \leqslant \frac{(1-c)\alpha_{k+1}\alpha_k^2}{2\eta_k}\|\vx^* - \vz^{(k+1)}\|^2+\frac{\eta_k}{2(1-c)\alpha_{k+1}}\vareps_k^2.
\vspace{-0.2cm}$$ 
Recall $\gamma_{k+1}=\frac{\alpha_k^2}{\eta_k}$. Hence, we have from \eqref{eq:main} that
\vspace{-0.2cm}
\begin{eqnarray*}
	&&\textstyle F(\vx^{(k+1)})-F^*+\frac{\gamma_{k+1}}{2}\|\vx^*-\vz^{(k+1)}\|^2\cr
	&\leqslant  & \textstyle (1-\alpha_k)\left[F(\vx^{(k)})-F^*+\frac{\gamma_{k}}{2}\|\vx^*-\vz^{(k)}\|^2\right] + \frac{(1-c)\alpha_{k+1}\gamma_{k+1}}{2}\|\vx^* - \vz^{(k+1)}\|^2+\frac{\eta_k}{2(1-c)\alpha_{k+1}}\vareps_k^2,
\vspace{-0.2cm}	
\end{eqnarray*}
which, after rearranging terms, is reduced to
\vspace{-0.2cm}
\begin{eqnarray}
	\label{eq:main-scvx}
	&&\textstyle F(\vx^{(k+1)})-F^*+\big(1-(1-c)\alpha_{k+1}\big)\frac{\gamma_{k+1}}{2}\|\vx^*-\vz^{(k+1)}\|^2\cr
	&\leqslant  & \textstyle (1-\alpha_k)\left[F(\vx^{(k)})-F^*+\frac{\gamma_{k}}{2}\|\vx^*-\vz^{(k)}\|^2\right] +\frac{\eta_k}{2(1-c)\alpha_{k+1}}\vareps_k^2.
	\vspace{-0.2cm}
\end{eqnarray}
 Then it follows from \eqref{eq:main-scvx}, the definition of $\psi_k$, and $F(\vx^{(k)}) - F^*\geqslant 0$ that
 \vspace{-0.2cm}
\begin{equation}\label{eq:rate-psi}
\textstyle 	\psi_{k+1} \leqslant \frac{1-\alpha_k}{1-(1-c)\alpha_k} \psi_k +\frac{\eta_k}{2(1-c)\alpha_{k+1}}\vareps_k^2 \leqslant (1-c\alpha_k) \psi_k +\frac{\sqrt\kappa}{2(1-c)\underline{L}}\vareps_k^2,
\vspace{-0.2cm}
\end{equation}
where we have used \eqref{eq:cond-alpha-gamma} and \eqref{eq:rate-alpha-mu}. 
Recursively applying the inequality in \eqref{eq:rate-psi} gives
\vspace{-0.1cm}
\begin{eqnarray*}
	\psi_{k+1} &\leqslant &\textstyle \prod_{j=0}^k (1-c\alpha_j)\psi_0 + \frac{\sqrt\kappa}{2(1-c)\underline{L}}\sum_{t=0}^{k}\Big(\prod_{j=t+1}^k (1-c\alpha_j) \Big)\vareps_t^2 \cr
	&=& \textstyle  \prod_{j=0}^k (1-c\alpha_j)\left(\psi_0 + \frac{\sqrt\kappa}{2(1-c)\underline{L}}\sum_{t=0}^{k}\frac{\vareps_t^2}{\prod_{j=0}^t (1-c\alpha_j)}\right),
	\vspace{-0.1cm}
\end{eqnarray*}
which implies \eqref{eq:rate-psi-k} because $\alpha_j \leqslant  1$ for all $j\geqslant 0$.	

When $\vareps_k=0$ and $\mu>0$, 
\eqref{eq:rate-scvx-no-noise} can be easily derived by recursively applying \eqref{eq:main}  and using \eqref{eq:rate-alpha-mu}.
\end{proof}

The result in \eqref{eq:rate-psi-k} is similar to Propositions~2 and 4 in~\cite{schmidt2011convergence} but takes a different form. It will be later used to derive the oracle complexity of the iAPG method. The result in \eqref{eq:rate-scvx-no-noise} is exactly the convergence property of the APG method~\cite{nesterov2003introductory} for a strongly convex case. Although \eqref{eq:rate-scvx-no-noise} is not new, we still present it here because we need it later to analyze the complexity to obtain $\vx^{(k+1)}$ in Line~6 of Algorithm~\ref{alg:accellinesearch}.

\subsection{Complexity of APG for finding an $\vareps$-stationary point of \eqref{eq:cvx-composite}}
The oracle complexity of Algorithm~\ref{alg:iAPG} must include the complexity for finding $\vx^{(k+1)}$ satisfying \eqref{eq:xkprecision} in each iteration of Algorithm~\ref{alg:accellinesearch}. Such an $\vx^{(k+1)}$ can be found by approximately solving \eqref{eq:def-Phi}, which is an instance of \eqref{eq:cvx-composite} with the $g$, $h$ and $r$ components being $\Phi(\,\cdot\,; \vy^{(k)}, \eta_k)-r(\cdot)$, $0$ and  $r(\cdot)$, respectively. The assumption on $r$ allows us to apply the exact APG method, i.e., Algorithm~\ref{alg:iAPG}  with $\vareps_k=0, \forall\, k\geqslant 0$ to \eqref{eq:def-Phi} in order to find $\vx^{(k+1)}$. The convergence of the objective gap by the exact APG method is characterized by \eqref{eq:rate-scvx-no-noise}. However, \eqref{eq:xkprecision} requires $\vx^{(k+1)}$ to be an $\vareps_k$-stationary solution of \eqref{eq:cvx-composite} instead of an $\vareps_k$-optimal solution. Hence, we first establish the complexity for the exact APG method to find an $\vareps$-stationary solution of \eqref{eq:cvx-composite}. The analysis is standard in literature and included for the sake of completeness.

\begin{lemma}\label{lem:bd-dist-grad}
	Let $C_L = \frac{L_g+L_h}{\sqrt{\underline{L}}} + \sqrt{\frac{L_g+L_h}{\gamma_{\mathrm{dec}}}} $, where $\underline{L}$ and $\gamma_{\mathrm{dec}}$ are those in Algorithms~\ref{alg:iAPG} and~\ref{alg:accellinesearch}.  Suppose the optional steps are invoked in Algorithm~\ref{alg:iAPG}. It holds that, for any $k\geqslant 0$,
	\vspace{-0.1cm}
	\begin{equation}\label{ineq:stationary0}
		\textstyle \dist\big(\vzero, \partial F(\widetilde\vx^{(k+1)})\big) \leqslant  C_L \sqrt{2\big(F(\vx^{(k+1)}) - F^*\big)}.
	\end{equation}
	\vspace{-0.1cm}
\end{lemma}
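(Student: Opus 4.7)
The plan is to exploit the explicit construction of $\widetilde\vx^{(k+1)}$ by Algorithm~\ref{alg:seekstationary} as a proximal gradient step from $\vx^{(k+1)}$, and then convert the standard ``gradient mapping is small $\Rightarrow$ sufficient decrease'' argument into the desired square-root bound by carefully splitting the Lipschitz coefficient against the stepsize bounds from Lemma~\ref{lem:boundsearchsteps}.

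First, I would write down an explicit subgradient. Since $\widetilde\vx^{(k+1)} = \prox_{\tilde\eta_{k+1} r}\bigl(\vx^{(k+1)} - \tilde\eta_{k+1}(\nabla g(\vx^{(k+1)}) + \nabla h(\vx^{(k+1)}))\bigr)$, the prox optimality gives $\tfrac{1}{\tilde\eta_{k+1}}(\vx^{(k+1)} - \widetilde\vx^{(k+1)}) - \nabla g(\vx^{(k+1)}) - \nabla h(\vx^{(k+1)}) \in \partial r(\widetilde\vx^{(k+1)})$. Adding $\nabla g(\widetilde\vx^{(k+1)}) + \nabla h(\widetilde\vx^{(k+1)})$ to both sides yields an explicit element of $\partial F(\widetilde\vx^{(k+1)})$. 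Applying the triangle inequality together with the $L_g$- and $L_h$-Lipschitz smoothness of $\nabla g$ and $\nabla h$ then produces
\[
\dist\bigl(\vzero,\partial F(\widetilde\vx^{(k+1)})\bigr) \;\leqslant\; \Bigl(\tfrac{1}{\tilde\eta_{k+1}} + L_g + L_h\Bigr)\,\bigl\|\vx^{(k+1)} - \widetilde\vx^{(k+1)}\bigr\|.
\]

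Second, I would prove the sufficient-decrease estimate $\|\vx^{(k+1)} - \widetilde\vx^{(k+1)}\|^2 \leqslant 2\tilde\eta_{k+1}(F(\vx^{(k+1)})-F^*)$. This follows by combining the line-search stopping inequality in Algorithm~\ref{alg:seekstationary}, which upper bounds $g(\widetilde\vx^{(k+1)}) + h(\widetilde\vx^{(k+1)})$ by the quadratic model at $\vx^{(k+1)}$, with the $\tfrac{1}{\tilde\eta_{k+1}}$-strong convexity of the prox objective evaluated at the test point $\vu = \vx^{(k+1)}$; this standard manipulation gives $F(\widetilde\vx^{(k+1)}) \leqslant F(\vx^{(k+1)}) - \tfrac{1}{2\tilde\eta_{k+1}}\|\vx^{(k+1)}-\widetilde\vx^{(k+1)}\|^2$, and the claim follows from $F(\widetilde\vx^{(k+1)})\geqslant F^*$.

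Third, and this is the key step that produces the specific constant $C_L$, I would not bound $\|\vx^{(k+1)}-\widetilde\vx^{(k+1)}\|$ uniformly; instead I would split the coefficient $\tfrac{1}{\tilde\eta_{k+1}}+L_g+L_h$ and pair each piece with a different use of the sufficient-decrease bound. Writing $\|\vx^{(k+1)}-\widetilde\vx^{(k+1)}\| \leqslant \sqrt{2\tilde\eta_{k+1}}\sqrt{F(\vx^{(k+1)})-F^*}$, the Lipschitz piece becomes
$(L_g+L_h)\sqrt{2\tilde\eta_{k+1}}\sqrt{F(\vx^{(k+1)})-F^*} \leqslant \tfrac{L_g+L_h}{\sqrt{\underline{L}}}\sqrt{2(F(\vx^{(k+1)})-F^*)}$
by $\tilde\eta_{k+1}\leqslant 1/\underline{L}$, while the stepsize piece becomes
$\tfrac{1}{\tilde\eta_{k+1}}\sqrt{2\tilde\eta_{k+1}}\sqrt{F(\vx^{(k+1)})-F^*} = \sqrt{2/\tilde\eta_{k+1}}\sqrt{F(\vx^{(k+1)})-F^*} \leqslant \sqrt{\tfrac{L_g+L_h}{\gamma_{\mathrm{dec}}}}\sqrt{2(F(\vx^{(k+1)})-F^*)}$
by the lower bound $\tilde\eta_{k+1} > \gamma_{\mathrm{dec}}/(L_g+L_h)$ from Lemma~\ref{lem:boundsearchsteps}. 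Summing the two pieces gives exactly the $C_L$ factor in \eqref{ineq:stationary0}.

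The main (and only nontrivial) obstacle is recognizing that the square root structure of $C_L$ is not accidental: a naive triangle-inequality bound would produce a factor of $(L_g+L_h)/\gamma_{\mathrm{dec}}$ rather than $\sqrt{(L_g+L_h)/\gamma_{\mathrm{dec}}}$. The refinement comes from absorbing one factor of $\sqrt{\tilde\eta_{k+1}}$ from the descent bound into the $1/\tilde\eta_{k+1}$ coefficient, which lowers the power of the condition number by half in the inversely-scaled piece; everything else is routine.
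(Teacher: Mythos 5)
Your proposal is correct and follows essentially the same route as the paper's proof: the prox-optimality subgradient plus Lipschitz continuity gives the factor $(L_g+L_h+\tfrac{1}{\tilde\eta_{k+1}})\|\vx^{(k+1)}-\widetilde\vx^{(k+1)}\|$, the line-search condition gives the sufficient-decrease bound $\|\vx^{(k+1)}-\widetilde\vx^{(k+1)}\|\leqslant\sqrt{2\tilde\eta_{k+1}(F(\vx^{(k+1)})-F^*)}$, and the two bounds $\gamma_{\mathrm{dec}}/(L_g+L_h)<\tilde\eta_{k+1}\leqslant 1/\underline{L}$ are applied to the two pieces of the coefficient exactly as you describe. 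Your ``key step'' of absorbing $\sqrt{\tilde\eta_{k+1}}$ into the $1/\tilde\eta_{k+1}$ term is precisely how the paper obtains $C_L$, so there is nothing to add.
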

\vspace{-0.2cm}
\begin{proof}
	When the stopping condition of Algorithm~\ref{alg:seekstationary} holds, we have (cf. \cite[Lemma 2.1]{xu2013block}) $F(\vx) - F(\widetilde\vx) \geqslant  \frac{1}{2\tilde\eta}\| \vx-\widetilde\vx\|^2$, and thus
	\vspace{-0.2cm}
	\begin{equation}\label{eq:bd-diff-x-xtilde-k}
		\|\widetilde\vx^{(k+1)}- \vx^{(k+1)}\|\leqslant \sqrt{2\tilde\eta_{k+1}\big(F(\vx^{(k+1)}) - F(\widetilde\vx^{(k+1)})\big)}.
		\vspace{-0.2cm}
	\end{equation} 
	Also, from the update of $\widetilde\vx$, we have $\vzero\in \nabla (g + h)(\vx) + \frac{1}{\eta}(\widetilde\vx- \vx)+\partial r(\widetilde\vx)$, and thus $\dist\big(\vzero, \partial F(\widetilde\vx)\big) \leqslant \|\nabla (g+h)(\widetilde\vx)- \nabla (g+h)(\vx) + \frac{1}{\eta}(\widetilde\vx- \vx)\|\leqslant (L_g + L_h + \frac{1}{\eta})\|\widetilde\vx- \vx\|$. Therefore, for $\widetilde\vx^{(k+1)}$ obtained in Algorithm~\ref{alg:iAPG}, it holds
	\vspace{-0.2cm}
	\begin{eqnarray}\label{ineq:stationary}
		\dist\big(\vzero, \partial F(\widetilde\vx^{(k+1)})\big)&\leqslant & \textstyle  (L_g + L_h+\frac{1}{\tilde\eta_{k+1}})\|\widetilde\vx^{(k+1)}- \vx^{(k+1)}\| \cr
		&\overset{\eqref{eq:bd-diff-x-xtilde-k}}\leqslant &  \textstyle (L_g + L_h+\frac{1}{\tilde\eta_{k+1}})\sqrt{2\tilde\eta_{k+1}\big(F(\vx^{(k+1)}) - F(\widetilde\vx^{(k+1)})\big)}.
		\vspace{-0.2cm}
	\end{eqnarray}
	Now use $\frac{\gamma_{\mathrm{dec}}}{L_g+L_h} < \tilde\eta_{k+1}\leqslant \frac{1}{\underline{L}}$ in \eqref{eq:cond-alpha-gamma} and $F(\widetilde\vx^{(k+1)})\geqslant  F^*$. We obtain the desired result.
\end{proof}

By \eqref{eq:rate-scvx-no-noise} and \eqref{ineq:stationary0}, we immediately have the following result.
\begin{theorem}
	\label{thm:convstationary}
	Suppose the optional steps are enabled in Algorithm~\ref{alg:iAPG}. Let  $C_L$ be defined in Lemma~\ref{lem:bd-dist-grad}. When $\mu>0$ and $\vareps_k=0$ for all $k\geqslant 0$, Algorithm~\ref{alg:iAPG} guarantees that 
	\vspace{-0.2cm}
	\begin{equation}\label{eq:comp-special}
		\dist\big(\vzero, \partial F(\widetilde\vx^{(k)})\big) \leqslant \textstyle C_L \sqrt{2\left(F(\vx^{(0)})-F^*+\frac{\gamma_{0}}{2}\|\vx^*-\vz^{(0)}\|^2\right)} \big(1-\sqrt{\frac{1}{\kappa}}\big)^{\frac{k}{2}}, \forall\, k\geqslant 0. 
		\vspace{-0.2cm}
	\end{equation}
\end{theorem}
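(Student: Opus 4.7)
The plan is to combine the two immediately preceding results in the natural way. Specifically, Lemma~\ref{lem:bd-dist-grad} gives the pointwise bound \eqref{ineq:stationary0} controlling the stationarity measure of $\widetilde\vx^{(k+1)}$ in terms of the objective gap $F(\vx^{(k+1)})-F^*$, while the second part of Theorem~\ref{thm:main_corollary} (equation \eqref{eq:rate-scvx-no-noise}) gives an exact linear rate on the Lyapunov-type quantity $F(\vx^{(k+1)})-F^*+\frac{\gamma_{k+1}}{2}\|\vx^*-\vz^{(k+1)}\|^2$ under the assumptions $\mu>0$ and $\vareps_k=0$.

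First I would observe that since $\frac{\gamma_{k+1}}{2}\|\vx^*-\vz^{(k+1)}\|^2\geqslant 0$, equation \eqref{eq:rate-scvx-no-noise} implies the weaker inequality
$$F(\vx^{(k+1)})-F^*\;\leqslant\;\bigl(1-\sqrt{1/\kappa}\bigr)^{k+1}\!\left(F(\vx^{(0)})-F^*+\frac{\gamma_{0}}{2}\|\vx^*-\vz^{(0)}\|^2\right).$$
Next I would plug this bound directly into \eqref{ineq:stationary0} from Lemma~\ref{lem:bd-dist-grad}, which is applicable because the optional steps (invoking Algorithm~\ref{alg:seekstationary}) are assumed to be enabled so that $\widetilde\vx^{(k+1)}$ is actually produced. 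Finally, I would pull the factor $\bigl(1-\sqrt{1/\kappa}\bigr)^{(k+1)/2}$ outside the square root to land exactly on the stated bound \eqref{eq:comp-special} (with $k+1$ relabeled as $k$; the $k=0$ case is trivial from $(1-\sqrt{1/\kappa})^{0}=1$ and the same inequality).

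There is essentially no obstacle here: both ingredients have already been proved, the strong-convexity case of \eqref{eq:rate-scvx-no-noise} is stated without noise terms so no error-accumulation analysis enters, and the constant $C_L$ is carried through verbatim. The only minor point to verify is that the bound in Lemma~\ref{lem:bd-dist-grad} requires the SeekStationary procedure to have been executed so that $\tilde\eta_{k+1}$ satisfies $\frac{\gamma_{\mathrm{dec}}}{L_g+L_h}<\tilde\eta_{k+1}\leqslant\frac{1}{\underline{L}}$ via Lemma~\ref{lemma:cond-alpha-gamma}; this is guaranteed by the hypothesis that the optional steps are enabled. Thus the proof reduces to a single line of substitution, which is why the paper flags it as an ``immediate'' consequence.
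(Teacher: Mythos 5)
Your proposal is correct and is exactly the paper's argument: the paper states Theorem~\ref{thm:convstationary} as an immediate consequence of \eqref{eq:rate-scvx-no-noise} and \eqref{ineq:stationary0}, which is precisely the substitution you carry out (drop the nonnegative $\frac{\gamma_{k+1}}{2}\|\vx^*-\vz^{(k+1)}\|^2$ term, insert the resulting bound on $F(\vx^{(k+1)})-F^*$ into Lemma~\ref{lem:bd-dist-grad}, and extract the geometric factor from the square root). No further comment is needed.
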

\vspace{-0.2cm}

\section{Oracle complexity of iAPG}
In this section, we show the oracle complexity of Algorithm~\ref{alg:iAPG} for finding an $\varepsilon$-optimal or $\vareps$-stationary solution of \eqref{eq:cvx-composite} in the convex and strongly convex cases separately. 

\subsection{Complexity for ensuring \eqref{eq:xkprecision}}
Theorem~\ref{thm:convstationary} implies the oracle complexity for finding $\vx^{(k+1)}$ satisfying \eqref{eq:xkprecision} by applying the exact APG method to \eqref{eq:def-Phi}. More specifically, we can compute $\vx^{(k+1)}$ by calling  the iAPG method as
\vspace{-0.1cm}
\begin{eqnarray}\label{eq:calliAPG}
	\vx^{(k+1)}=\mathrm{iAPG}\left(\Phi(\cdot\,; \vy^{(k)}, \eta_k)-r(\cdot), 0, r(\cdot), \vx^{(k)}, \eta_k, \eta_k^{-1}, \eta_k^{-1}, \eta_k^{-1}, (0)_{k\geqslant 0}\right).
	\vspace{-0.1cm}
\end{eqnarray}
Note that here we use $\vx^{(k)}$ as the initial solution for computing $\vx^{(k+1)}$ and the inputs in \eqref{eq:calliAPG} are chosen based on the fact that $\Phi(\cdot\,; \vy^{(k)}, \eta_k)-r(\cdot)$ is $1/\eta_k$-strongly convex. The complexity of finding $\vx^{(k+1)}$ is then given as follows.

\begin{proposition}[Complexity for ensuring \eqref{eq:xkprecision}]\label{lem:inner-comp} 
	Let $\vx^{(k+1)}_*$ and $\Phi$ be defined in \eqref{eq:def-Phi}. Suppose Algorithm~\ref{alg:iAPG} is applied to \eqref{eq:def-Phi} with the inputs given in \eqref{eq:calliAPG} and the optional step enabled. Solution $\vx^{(k+1)}$ satisfying \eqref{eq:xkprecision} can be found after at most $T_k$ queries to $(h, \nabla h)$, where
	\vspace{-0.2cm}
	\begin{equation}\label{eq:num-oracle-k}
		T_k = \textstyle O\left(\sqrt{1+\frac{L_h}{\underline{L}}}\log\frac{\sqrt{L_g+L_h+L_h^2/\underline{L}}\sqrt{\Phi(\vx^{(k)}; \vy^{(k)},\eta_k) - \Phi(\vx^{(k+1)}_*; \vy^{(k)},\eta_k) }}{\vareps_k}\right) 
		\vspace{-0.2cm}
	\end{equation}
	with hidden constants in the big-$O$ depending only on $\gamma_{\mathrm{dec}}$ and $\gamma_{\mathrm{inc}}$.
\end{proposition}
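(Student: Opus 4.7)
The plan is to instantiate Theorem~\ref{thm:convstationary} on the inner subproblem \eqref{eq:def-Phi} and then translate the resulting bound into the precise form of \eqref{eq:num-oracle-k}. When Algorithm~\ref{alg:iAPG} is called as in \eqref{eq:calliAPG}, the inner objective plays the role of $F$ in \eqref{eq:cvx-composite} with the smooth part $\Phi(\cdot;\vy^{(k)},\eta_k)-r(\cdot)$, which is $(1/\eta_k + L_h)$-smooth and $(1/\eta_k)$-strongly convex (the quadratic plus $h$, since $h$ itself is convex and $L_h$-smooth), the ``inner'' $h$-component set to $0$, and $\underline{L}^{\mathrm{in}} = \gamma_0^{\mathrm{in}} = \mu^{\mathrm{in}} = 1/\eta_k$. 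Since the zero function contributes nothing to $C_L$ or $\kappa$ in the inner call, I can simply read off $\kappa^{\mathrm{in}} = (1/\eta_k + L_h)/(\gamma_{\mathrm{dec}}/\eta_k) = (1+\eta_k L_h)/\gamma_{\mathrm{dec}}$ and $C_L^{\mathrm{in}} = (1/\eta_k + L_h)\sqrt{\eta_k} + \sqrt{(1/\eta_k + L_h)/\gamma_{\mathrm{dec}}}$.

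Next, I would use the two-sided control on $\eta_k$ from Lemma~\ref{lemma:cond-alpha-gamma}, namely $\gamma_{\mathrm{dec}}/L_g < \eta_k \leqslant 1/\underline{L}$, to eliminate $\eta_k$ from both quantities. The upper bound $\eta_k \leqslant 1/\underline{L}$ gives $\kappa^{\mathrm{in}} \leqslant (1+L_h/\underline{L})/\gamma_{\mathrm{dec}}$, so $\sqrt{\kappa^{\mathrm{in}}} = O(\sqrt{1+L_h/\underline{L}})$. For $C_L^{\mathrm{in}}$, the term $(1/\eta_k)\sqrt{\eta_k} = \sqrt{1/\eta_k} \leqslant \sqrt{L_g/\gamma_{\mathrm{dec}}}$, the term $L_h\sqrt{\eta_k} \leqslant L_h/\sqrt{\underline{L}}$, and $\sqrt{(1/\eta_k + L_h)/\gamma_{\mathrm{dec}}} \leqslant \sqrt{(L_g/\gamma_{\mathrm{dec}} + L_h)/\gamma_{\mathrm{dec}}}$. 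Squaring and summing these bounds shows $C_L^{\mathrm{in}} = O\bigl(\sqrt{L_g + L_h + L_h^2/\underline{L}}\bigr)$, exactly the prefactor appearing inside the logarithm in \eqref{eq:num-oracle-k}.

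I would then apply Theorem~\ref{thm:convstationary} to the inner run (which is valid since $\vareps_k = 0$ inside), giving
\[
\dist\bigl(\vzero,\partial\Phi(\widetilde\vx^{(T)};\vy^{(k)},\eta_k)\bigr) \leqslant C_L^{\mathrm{in}}\sqrt{2\bigl(\Phi(\vx^{(k)})-\Phi(\vx^{(k+1)}_*) + \tfrac{1}{2\eta_k}\|\vx^{(k)}-\vx^{(k+1)}_*\|^2\bigr)}\,\bigl(1-\sqrt{1/\kappa^{\mathrm{in}}}\bigr)^{T/2}.
\]
Using the $1/\eta_k$-strong convexity of $\Phi$ (inherited from the quadratic term plus convexity of $h$ and $r$), the squared-distance term is dominated by $\Phi(\vx^{(k)})-\Phi(\vx^{(k+1)}_*)$, so the bracket is at most $2\bigl(\Phi(\vx^{(k)})-\Phi(\vx^{(k+1)}_*)\bigr)$. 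Forcing the right-hand side below $\vareps_k$ then amounts to choosing $T$ such that $(1-\sqrt{1/\kappa^{\mathrm{in}}})^{T/2} \leqslant \vareps_k / \bigl(C_L^{\mathrm{in}}\sqrt{4(\Phi(\vx^{(k)})-\Phi(\vx^{(k+1)}_*))}\bigr)$, and the standard estimate $\log(1-\sqrt{1/\kappa^{\mathrm{in}}})^{-1} \geqslant \sqrt{1/\kappa^{\mathrm{in}}}$ yields $T = O\bigl(\sqrt{\kappa^{\mathrm{in}}}\log(\,\cdot\,)\bigr)$, matching \eqref{eq:num-oracle-k}.

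Finally, I need to convert $T$ outer inner-loop iterations into total queries of $(h,\nabla h)$ of the original composite problem. Every evaluation of the inner $g^{\mathrm{in}}$ or $\nabla g^{\mathrm{in}}$ (used in the main proximal-gradient step, in the line search of Algorithm~\ref{alg:accellinesearch}, and in the optional stationary search of Algorithm~\ref{alg:seekstationary}) requires one call to $(h,\nabla h)$ of the outer problem, while the inner $h^{\mathrm{in}}\equiv 0$ costs nothing. By Lemma~\ref{lem:boundsearchsteps}, the cumulative number of line-search and stationary-search evaluations across $T$ inner-loop iterations is $O(T)$ plus an additive logarithmic term, so the total number of outer $(h,\nabla h)$ queries stays within the $O$-estimate in \eqref{eq:num-oracle-k}. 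The only bookkeeping subtlety is to make sure the constants hidden in the big-$O$ depend only on $\gamma_{\mathrm{dec}},\gamma_{\mathrm{inc}}$ and not on problem data; this follows because every elimination of $\eta_k$ above uses only the universal relations $\gamma_{\mathrm{dec}}/L_g < \eta_k \leqslant 1/\underline{L}$. The main obstacle is therefore purely algebraic: massaging $(1/\eta_k + L_h)\sqrt{\eta_k}$ and its variants into the clean form $\sqrt{L_g+L_h+L_h^2/\underline{L}}$ without losing the separation between $L_g$ and $L_h$ that is essential in the downstream complexity analysis.
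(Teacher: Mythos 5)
Your proposal is correct and follows essentially the same route as the paper's proof: instantiate Theorem~\ref{thm:convstationary} on the subproblem \eqref{eq:def-Phi} with smoothness $1/\eta_k+L_h$ and strong convexity $1/\eta_k$, absorb the initial squared-distance term via \eqref{eq:distancebygap}, eliminate $\eta_k$ using the two-sided bounds in \eqref{eq:cond-alpha-gamma}, and account for the line-search overhead with Lemma~\ref{lem:boundsearchsteps}. The identifications of $\kappa^{\mathrm{in}}$, $C_L^{\mathrm{in}}$, and the final algebraic simplification to $\sqrt{L_g+L_h+L_h^2/\underline{L}}$ all match the paper's computation in \eqref{eq:upperboundtk}.
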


\begin{proof}
	Notice that the smoothness constant and the strong convexity parameter of $\Phi(\,\cdot\,; \vy^{(k)},\eta_k) - r(\cdot)$ are $\frac{1}{\eta_k}+L_h$ and $\frac{1}{\eta_k}$, respectively. 
From the strong convexity of $\Phi$, it holds 
\vspace{-0.2cm}
	\begin{equation}
	\label{eq:distancebygap}
\textstyle	\frac{1}{2\eta_k}\|\vx^{(k)}-\vx^{(k+1)}_*\|^2\leqslant \Phi(\vx^{(k)}; \vy^{(k)},\eta_k) - \Phi(\vx^{(k+1)}_*; \vy^{(k)},\eta_k).
\vspace{-0.2cm}
\end{equation}
By instantizing Theorem~\ref{thm:convstationary} on \eqref{eq:def-Phi}, Algorithm~\ref{alg:iAPG} with the inputs given in \eqref{eq:calliAPG} must find $\vx^{(k+1)}$ satisfying \eqref{eq:xkprecision} in no more than $t_k$ iterations with
\vspace{-0.2cm}
	\begin{eqnarray}
		\nonumber
		t_k&\leqslant&\textstyle \left\lceil \log\frac{\big(\frac{1/\eta_k+L_h}{\sqrt{1/\eta_k}}+\sqrt{\frac{1/\eta_k+L_h}{\gamma_{\mathrm{dec}}}}\big)\sqrt{2\Phi(\vx^{(k)}; \vy^{(k)},\eta_k) - 2\Phi(\vx^{(k+1)}_*; \vy^{(k)},\eta_k)+\frac{1}{\eta_k}\|\vx^{(k)}-\vx^{(k+1)}_*\|^2 }}{\vareps_k} \frac{2}{ \log\big(1 - \sqrt{\frac{\gamma_{\mathrm{dec}}}{1+\eta_k L_h}} \big)^{-1} } \right\rceil\\\label{eq:upperboundtk}
		&=&\textstyle O\left(\sqrt{1+\frac{L_h}{\underline{L}} }\log\frac{2\big(\sqrt{L_g/\gamma_{\mathrm{dec}}}+L_h/\sqrt{\underline{L}}+\sqrt{\frac{L_g/\gamma_{\mathrm{dec}}+L_h}{\gamma_{\mathrm{dec}}}} \big)\sqrt{\Phi(\vx^{(k)}; \vy^{(k)},\eta_k) - \Phi(\vx^{(k+1)}_*; \vy^{(k)},\eta_k) }}{\vareps_k} \right),
		\vspace{-0.2cm}
	\end{eqnarray}
	where the second equation is because of \eqref{eq:cond-alpha-gamma} and \eqref{eq:distancebygap} and uses the fact $\ln(1-a)^{-1} \geqslant a$ for any $a\in (0,1)$. 
	
	
	Due to the repeat-loop in Algorithms~\ref{alg:accellinesearch} and~\ref{alg:seekstationary}, $(h, \nabla h)$ may be queried multiple times in each iteration during the call of iAPG given in \eqref{eq:calliAPG}. Fortunately, by instantizing Lemma~\ref{lem:boundsearchsteps} on \eqref{eq:def-Phi} with the input given in \eqref{eq:calliAPG}, the total number of queries of $(h, \nabla h)$ must be no more than 
	\vspace{-0.1cm}
	\begin{eqnarray*} 
		T_k&=& \textstyle 2\left(\frac{\log\gamma_{\mathrm{inc}}}{\log\gamma^{-1}_{\mathrm{dec}}}\right)(t_k-1)+\frac{2}{\log\gamma^{-1}_{\mathrm{dec}}} \min\left\{\log\left(\frac{1+\eta_kL_h}{\gamma^2_{\mathrm{dec}}}\right), \ \log\left(\frac{\gamma_{\mathrm{inc}}(1+\eta_kL_h)}{\gamma_{\mathrm{dec}}}\right)\right\}
		+2t_k+2+\frac{2}{\log\gamma^{-1}_{\mathrm{dec}}}\log\left(1+\eta_kL_h\right)
		\vspace{-0.1cm}
	\end{eqnarray*}
	which, together with \eqref{eq:upperboundtk} and \eqref{eq:cond-alpha-gamma}, implies the conclusion.
\end{proof}

\subsection{Oracle complexity in the strongly convex case}
In this subsection, we consider the strongly convex case, i.e., $\mu>0$. By Theorem~\ref{thm:main_corollary} and Proposition~\ref{lem:inner-comp}, we can establish the overall complexity result to produce an $\vareps$-optimal solution or an $\vareps$-stationary solution of \eqref{eq:cvx-composite} by specifying the choice of the inexactness parameters $\{\vareps_k\}_{k\geqslant 0}$ and bounding $\Phi(\vx^{(k)}; \vy^{(k)},\eta_k) - \Phi(\vx^{(k+1)}_*; \vy^{(k)},\eta_k)$ from above. To do so, let $\vareps_0>0$ be any constant and define the following quantities
\vspace{-0.1cm}
\begin{eqnarray}\label{eq:choice-eps-k}
\vareps_k &=& \textstyle \frac{\vareps_0}{k+1} \sqrt{\prod_{j=0}^{k-1} (1-c\alpha_j)},~~\forall\,k\geqslant 1,\\\label{eq:D-delta}
S&=& \textstyle \frac{\sqrt\kappa}{2(1-c)^2\underline{L}}\sum_{k=0}^{\infty}\frac{\vareps_k^2}{\prod_{j=0}^{k-1} (1-c\alpha_k)}=\frac{\sqrt\kappa}{2(1-c)^2\underline{L}}\sum_{k=0}^{\infty}\frac{\vareps_0^2}{(k+1)^{2}} < \infty,\\\label{eq:choice-delta-k}
\delta_k &=&\textstyle \sqrt{\prod_{j=0}^{k-1} (1-c\alpha_j)} \sqrt{\frac{2(\psi_0 + S)}{\mu}},~~\forall\, k\geqslant 0,
\vspace{-0.1cm}
\end{eqnarray}
where $c\in [0,1)$ is the same constant as that in Theorem~\ref{thm:main_corollary} and $\kappa$ is defined in Lemma~\ref{lem:prod-alpha}. By \eqref{eq:rate-psi-k}, \eqref{eq:choice-eps-k} and \eqref{eq:D-delta}, we have
\vspace{-0.2cm}
\begin{equation}\label{eq:rate-psi-k-1}
\textstyle	\psi_{k+1}\leqslant \prod_{j=0}^k (1-c\alpha_j) \left(\psi_0 + S\right),~\forall k\geqslant 0.
\vspace{-0.1cm}
\end{equation}
With these notations and properties, $\Phi(\vx^{(k)}; \vy^{(k)},\eta_k) - \Phi(\vx^{(k+1)}_*; \vy^{(k)},\eta_k)$ can be upper bounded as follows.

\begin{lemma}\label{lem:bd-Phi-diff}
Suppose $\mu>0$ and $\{\vareps_k\}_{k\geqslant 0}$ in Algorithm~\ref{alg:iAPG} are given in \eqref{eq:choice-eps-k}. Let $\vx^{(k+1)}_*$ and $\Phi$ be defined by~\eqref{eq:def-Phi} and $\delta_k$ by~\eqref{eq:choice-delta-k} with $c\in[0,1)$. Algorithm~\ref{alg:iAPG} guarantees that
\vspace{-0.2cm}
\begin{eqnarray}
\label{eq:low-bd-Phi-k}
\Phi(\vx^{(k)}; \vy^{(k)},\eta_k) - \Phi(\vx^{(k+1)}_*; \vy^{(k)},\eta_k) 
&\leqslant&   \left\{
\begin{array}{ll}
\frac{1}{2\underline{L}} \dist\left({\textstyle\vzero, \partial F(\vx^{(0)})}\right)^2&\text{ if }k=0,\\
\frac{1}{2\underline{L}} \left( {\textstyle \vareps_{k-1} + \frac{3L_g(\delta_k +\delta_{k-1}) }{ \gamma_{\mathrm{dec}}\sqrt c} }\right)^2& \text{ if }k\geqslant 1.
\end{array}
\right.
\vspace{-0.2cm}
\end{eqnarray}
\end{lemma}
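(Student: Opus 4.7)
The plan is to exploit the strong convexity of $\Phi(\cdot;\vy^{(k)},\eta_k)$ with parameter $1/\eta_k$ in conjunction with the non-smooth Polyak–{\L}ojasiewicz inequality: for any $\sigma$-strongly convex function $\phi$ with minimizer $\vx_*$, one has $\phi(\vx) - \phi(\vx_*)\leqslant \frac{1}{2\sigma}\dist(\vzero,\partial\phi(\vx))^2$. Applying this to $\Phi$ and using $\eta_k\leqslant 1/\underline L$ from \eqref{eq:cond-alpha-gamma} reduces the lemma to the estimate
\[
  \dist\bigl(\vzero,\partial_\vx\Phi(\vx^{(k)};\vy^{(k)},\eta_k)\bigr)
  \leqslant \left\{
  \begin{array}{ll}
    \dist(\vzero,\partial F(\vx^{(0)})), & k=0,\\[2pt]
    \vareps_{k-1}+ \tfrac{3L_g(\delta_k+\delta_{k-1})}{\gamma_{\mathrm{dec}}\sqrt c}, & k\geqslant 1,
  \end{array}\right.
\]
because then squaring and multiplying by $\eta_k/2\leqslant 1/(2\underline L)$ yields the claim.

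For $k=0$, I would note that $\vz^{(0)}=\vx^{(0)}$ forces $\vy^{(0)}=\vx^{(0)}$ from Line~5 of Algorithm~\ref{alg:accellinesearch}, and so $\partial_\vx\Phi(\vx^{(0)};\vy^{(0)},\eta_0)=\nabla g(\vx^{(0)})+\nabla h(\vx^{(0)})+\partial r(\vx^{(0)})=\partial F(\vx^{(0)})$, which finishes this case. For $k\geqslant 1$, the key idea is to reuse the inexact stationarity of $\vx^{(k)}$ inherited from the previous iteration: by \eqref{eq:xkprecision} at iteration $k-1$, there exists $\vxi\in\partial H(\vx^{(k)})$ with $\|\nabla g(\vy^{(k-1)})+\tfrac{1}{\eta_{k-1}}(\vx^{(k)}-\vy^{(k-1)})+\vxi\|\leqslant \vareps_{k-1}$. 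Taking $\nabla g(\vy^{(k)})+\tfrac{1}{\eta_k}(\vx^{(k)}-\vy^{(k)})+\vxi$ as the witness element in $\partial_\vx\Phi(\vx^{(k)};\vy^{(k)},\eta_k)$, adding and subtracting the inexact stationarity vector, and applying the triangle inequality splits the distance into three pieces: $\vareps_{k-1}$, a Lipschitz term $L_g\|\vy^{(k)}-\vy^{(k-1)}\|$, and a proximal difference $\|\tfrac{1}{\eta_k}(\vx^{(k)}-\vy^{(k)})-\tfrac{1}{\eta_{k-1}}(\vx^{(k)}-\vy^{(k-1)})\|$.

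To control the last two pieces I would build a library of distance bounds from \eqref{eq:rate-psi-k-1}. Strong convexity of $g$ together with the definition of $\psi_k$ gives $\|\vx^{(k)}-\vx^*\|\leqslant\delta_k$; the coefficient $1-(1-c)\alpha_k\geqslant c$ (since $\alpha_k\leqslant 1$) combined with $\gamma_k\geqslant\mu$ yields $\|\vz^{(k)}-\vx^*\|\leqslant\delta_k/\sqrt c$; and writing $\vy^{(k)}$ as a convex combination of $\vz^{(k)}$ and $\vx^{(k)}$ gives $\|\vy^{(k)}-\vx^*\|\leqslant\delta_k/\sqrt c$ (since $\sqrt c\leqslant 1$). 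Using $1/\eta_k\leqslant L_g/\gamma_{\mathrm{dec}}$, the Lipschitz piece is at most $\tfrac{L_g(\delta_k+\delta_{k-1})}{\gamma_{\mathrm{dec}}\sqrt c}$, and by applying the triangle inequality separately to each of the two proximal terms (using $\|\vx^{(k)}-\vy^{(k)}\|\leqslant 2\delta_k/\sqrt c$ and $\|\vx^{(k)}-\vy^{(k-1)}\|\leqslant \delta_k+\delta_{k-1}/\sqrt c$) the proximal piece is at most $\tfrac{L_g(3\delta_k+\delta_{k-1})}{\gamma_{\mathrm{dec}}\sqrt c}$. Summing gives $\vareps_{k-1}+\tfrac{L_g(4\delta_k+2\delta_{k-1})}{\gamma_{\mathrm{dec}}\sqrt c}$.

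The remaining hurdle is matching the stated constant $3L_g(\delta_k+\delta_{k-1})/(\gamma_{\mathrm{dec}}\sqrt c)$, i.e., replacing $4\delta_k+2\delta_{k-1}$ by $3\delta_k+3\delta_{k-1}$. I expect this to be the main delicate point: one must observe that $\delta_k$ is monotonically nonincreasing in $k$, which follows directly from the definition \eqref{eq:choice-delta-k} because each factor $1-c\alpha_j\in(0,1]$ shrinks the product. Once $\delta_k\leqslant\delta_{k-1}$ is in hand, the inequality $4\delta_k+2\delta_{k-1}\leqslant 3\delta_k+3\delta_{k-1}$ is immediate, and combining with the PL bound completes the proof.
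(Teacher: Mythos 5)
Your proposal is correct and follows essentially the same route as the paper's proof: the strong-convexity (PL) bound reducing the gap to $\frac{\eta_k}{2}\dist(\vzero,\partial\Phi(\vx^{(k)};\vy^{(k)},\eta_k))^2$, reuse of the inexact stationarity condition \eqref{eq:xkprecision} from iteration $k-1$ as the witness subgradient, the distance bounds $\|\vx^{(k)}-\vx^*\|\leqslant\delta_k$, $\|\vz^{(k)}-\vx^*\|\leqslant\delta_k/\sqrt c$, $\|\vy^{(k)}-\vx^*\|\leqslant\delta_k/\sqrt c$ derived from $\psi_k$ and \eqref{eq:rate-psi-k-1}, and the final use of $\gamma_{\mathrm{dec}}\leqslant 1$, $\sqrt c\leqslant 1$, and $\delta_k\leqslant\delta_{k-1}$ to reach the constant $3$. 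The only cosmetic difference is that your intermediate groupings ($2\delta_k/\sqrt c$ versus the paper's $\delta_k+\delta_k/\sqrt c$) are arranged slightly differently, but they yield the identical final bound.
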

\vspace{-0.1cm}

\begin{proof}
Notice $\vy^{(0)} = \vx^{(0)}$, and thus $\Phi(\vx^{(0)}; \vy^{(0)},\eta_0) = H(\vx^{(0)})$. By the convexity of $H$, it holds
\vspace{-0.1cm}
$$\Phi(\vx; \vy^{(0)},\eta_0) \geqslant \textstyle \left\langle\nabla g(\vx^{(0)}),\vx-\vx^{(0)}\right\rangle+\frac{1}{2\eta_0}\|\vx-\vx^{(0)}\|^2 + H(\vx^{(0)})+\langle \vxi, \vx-\vx^{(0)}\rangle
\vspace{-0.1cm}$$
for any $\vxi\in\partial H(\vx^{(0)})$. Hence,
\vspace{-0.1cm}
\begin{align}\label{eq:low-bd-Phi-case-k=0}
	\Phi(\vx^{(0)}; \vy^{(0)},\eta_0)-\Phi(\vx^{(1)}_*; \vy^{(0)},\eta_0) \leqslant &~ -\min_\vx \textstyle \big\{\left\langle\nabla g(\vx^{(0)})+\vxi,\vx-\vx^{(0)}\right\rangle+\frac{1}{2\eta_0}\|\vx-\vx^{(0)}\|^2\big\}\nonumber\\
	= & ~ \textstyle \frac{\eta_0}{2} \|\nabla g(\vx^{(0)})+\vxi\|^2 \leqslant \frac{1}{2\underline L} \|\nabla g(\vx^{(0)})+\vxi\|^2
	\vspace{-0.1cm}
\end{align} 
by noticing $\eta_0 \leqslant \frac{1}{\underline L}$. Minimizing the right-hand size of \eqref{eq:low-bd-Phi-case-k=0} over $\vxi\in\partial H(\vx^{(0)})$ gives \eqref{eq:low-bd-Phi-k} for $k=0$.

Suppose $k\geqslant 1$. By the definition of $\psi_k$ in Theorem~\ref{thm:main_corollary} and the $\mu$-strong convexity of $F$, we have 
\vspace{-0.1cm}$$
\textstyle \psi_k\geqslant\frac{\mu}{2}\|\vx^{(k)}-\vx^*\|^2+(1-(1-c)\alpha_{k})\frac{\gamma_{k}}{2}\|\vz^{(k)}-\vx^*\|^2\geqslant 
\frac{\mu}{2}\|\vx^{(k)}-\vx^*\|^2+\frac{c\mu}{2}\|\vz^{(k)}-\vx^*\|^2,\vspace{-0.1cm}
$$
where the second inequality is due to \eqref{eq:cond-alpha-gamma}. This inequality implies 
\vspace{-0.1cm}
\begin{equation}\label{eq:dist-xz-k+1-to-xstar}
\textstyle \max\left\{\|\vx^{(k)}-\vx^*\|,\sqrt{c}\|\vz^{(k)}-\vx^*\|\right\} \leqslant \sqrt{\frac{2\psi_k}{\mu}}\leqslant \sqrt{\prod_{j=0}^{k-1} (1-c\alpha_j)} \sqrt{\frac{2(\psi_0 + S)}{\mu}}=\delta_k, ~\forall\, k\geqslant 0, \vspace{-0.1cm}
\end{equation}
where the second inequality is by \eqref{eq:rate-psi-k-1} and the equality is by \eqref{eq:choice-delta-k}. Since $c\in (0,1)$ and $\vy^{(k)}$ is a convex combination of $\vx^{(k)}$ and $\vz^{(k)}$, it follows from \eqref{eq:dist-xz-k+1-to-xstar} that
\vspace{-0.1cm}
\begin{equation}\label{eq:dist-y-k+1-to-xstar}
\textstyle \|\vy^{(k)}-\vx^*\| \leqslant \frac{\delta_k}{\sqrt c} , ~\forall\, k\geqslant 0.
\vspace{-0.1cm}
\end{equation} 

By \eqref{eq:xkprecision}, it holds that $\dist\big(\vzero, \nabla g(\vy^{(k-1)}) + \frac{1}{\eta_{k-1}}(\vx^{(k)}-\vy^{(k-1)}) + \partial H(\vx^{(k)})\big) \leqslant \vareps_{k-1}$ for $k\geqslant 1$. Hence, by the definition of $\Phi$ in \eqref{eq:def-Phi}, we have
\vspace{-0.1cm}
\begin{align}
\dist\big(\vzero, \partial \Phi(\vx^{(k)}; \vy^{(k)},\eta_k)\big) \leqslant & ~ \textstyle \vareps_{k-1} + \|\nabla g(\vy^{(k)}) - \nabla g(\vy^{(k-1)})\| + \frac{1}{\eta_{k-1}}\|\vx^{(k)}-\vy^{(k-1)}\| + \frac{1}{\eta_{k}}\|\vx^{(k)}-\vy^{(k)}\| \nonumber\\
\leqslant & ~ \textstyle \vareps_{k-1} + L_g \|\vy^{(k)} - \vy^{(k-1)}\| + \frac{L_g}{\gamma_{\mathrm{dec}}} \|\vx^{(k)}-\vy^{(k-1)}\| + \frac{L_g}{\gamma_{\mathrm{dec}}}\|\vx^{(k)}-\vy^{(k)}\|\label{eq:low-bd-dist-Phi-scvx-pre}\\
\leqslant & ~ \textstyle \vareps_{k-1} + \frac{L_g(\delta_k +\delta_{k-1}) }{\sqrt c} + \frac{L_g}{\gamma_{\mathrm{dec}}} \big(\delta_k +  \frac{\delta_{k-1}}{\sqrt c} \big) + \frac{L_g}{\gamma_{\mathrm{dec}}} \big(\delta_k +  \frac{\delta_k}{\sqrt c} \big), \label{eq:low-bd-dist-Phi-scvx}
\vspace{-0.1cm}
\end{align}
where  the second inequality is by \eqref{eq:cond-alpha-gamma} and third inequality follows from \eqref{eq:dist-xz-k+1-to-xstar}, \eqref{eq:dist-y-k+1-to-xstar}, and the triangle inequality.  In addition, from the strong convexity of $\Phi(\,\cdot\,; \vy, \eta)$, it follows that for  $k\geqslant 1$,
\vspace{-0.1cm}
\begin{align}\label{eq:low-bd-Phi}
 \Phi(\vx^{(k)}; \vy^{(k)},\eta_k) - \Phi(\vx^{(k+1)}_*; \vy^{(k)},\eta_k) 
\leqslant &~{\textstyle \frac{\eta_k}{2} }\dist\big(\vzero, \partial \Phi(\vx^{(k)}; \vy^{(k)},\eta_k)\big)^2, 
\vspace{-0.1cm}
\end{align}
which, together with \eqref{eq:low-bd-dist-Phi-scvx} and the facts that $c<1, \gamma_{\mathrm{dec}} \leqslant 1, \eta_k \leqslant \frac{1}{\underline L}$, and $\delta_k  \leqslant\delta_{k-1}$, gives the result in \eqref{eq:low-bd-Phi-k} for $k\geqslant 1$. This completes the proof. 
\end{proof}

Lemma~\ref{lem:bd-Phi-diff} allows us to simplify the right-hand side of \eqref{eq:num-oracle-k} and obtain the following result. 

\begin{theorem}[Oracle complexity to obtain an $\vareps$-optimal solution]\label{thm:oracle-opt-scvx}
Suppose $\mu>0$ and $\{\vareps_k\}_{k\geqslant 0}$ in Algorithm~\ref{alg:iAPG} are given in \eqref{eq:choice-eps-k}. Also, suppose $\vx^{(k+1)}$ is computed by applying Algorithm~\ref{alg:iAPG} to \eqref{eq:def-Phi} with the inputs given in \eqref{eq:calliAPG} and the optional steps enabled. Then for any $\vareps>0$, Algorithm~\ref{alg:iAPG} with\footnote{We assume $\underline{L} = \Theta(L_g)$ in order to simplify the results. Our algorithm does not actually need to know $L_g$.} $\underline{L} = \Theta(L_g)$ can produce an $\vareps$-optimal solution of \eqref{eq:cvx-composite} by $K_{\mathrm{opt}}^{\mathrm{sc}}$ queries to $(g, \nabla g)$ and $T_{\mathrm{opt}}^{\mathrm{sc}}$ queries to $(h, \nabla h)$, where
\vspace{-0.1cm}
\begin{eqnarray}
&&\textstyle K_{\mathrm{opt}}^{\mathrm{sc}} \leqslant 2\left\lceil \log_{\gamma_{\mathrm{dec}}}\frac{\underline{L}}{L_g}\right\rceil \cdot \left\lceil \frac{\log\frac{\psi_0 + S}{\vareps}} {\log 1/(1-c/\sqrt\kappa)}\right\rceil = O\left(\sqrt\kappa\log\frac{\psi_0 + S}{\vareps}\right), \label{eq:up-bd-K}\\
&&\textstyle T_{\mathrm{opt}}^{\mathrm{sc}}=O\left(T'+\sqrt{\frac{L_g+L_h}{\mu}}\log\frac{\psi_0 + S}{\vareps} \left(\log C_\kappa + \log\log\frac{\psi_0 + S}{\vareps}\right) \right). \label{eq:comp-opt-scvx}
\vspace{-0.1cm}
\end{eqnarray}
In the above, $\kappa$ is defined in Lemma~\ref{lem:prod-alpha}, 
\vspace{-0.1cm}
\begin{equation}\label{eq:def-T0-Ckappa}
\textstyle T'=\sqrt{1+\frac{L_h}{L_g}}\log\frac{\big(1+\frac{L_h}{L_g}\big) \dist\big(\vzero, \partial F(\vx^{(0)})\big)}{\vareps_0},\ C_\kappa = \frac{\sqrt{\kappa }(L_g+L_h)(2-c)}{\vareps_0\gamma_{\mathrm{dec}}\sqrt c (1-c)}  \sqrt{\frac{2(\psi_0 + S)}{\mu}},
\vspace{-0.1cm}
\end{equation}
and the big-Os hide universal constants depending only on $\gamma_{\mathrm{dec}}$,  $\gamma_{\mathrm{inc}}$ and $c$.
\end{theorem}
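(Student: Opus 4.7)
The plan is to combine the linear convergence of $\psi_k$ from Theorem~\ref{thm:main_corollary} with the per-outer-iteration cost bounds in Lemma~\ref{lem:boundsearchsteps} and Proposition~\ref{lem:inner-comp}, treating the $(g,\nabla g)$ and $(h,\nabla h)$ counts separately. First I would control the number $K$ of outer iterations needed for $\vareps$-optimality. Since $\psi_k \geqslant F(\vx^{(k)})-F^*$ by $\mu$-strong convexity, and since \eqref{eq:rate-psi-k-1} combined with the lower bound $\alpha_j \geqslant 1/\sqrt{\kappa}$ from \eqref{eq:rate-alpha-mu} gives
\[
\psi_{k+1} \leqslant \textstyle\bigl(1-c/\sqrt{\kappa}\bigr)^{k+1}(\psi_0+S),
\]
demanding the right-hand side to be $\leqslant \vareps$ yields $K \leqslant \lceil \log\bigl((\psi_0+S)/\vareps\bigr) / \log\bigl(1/(1-c/\sqrt{\kappa})\bigr)\rceil = O(\sqrt{\kappa}\log((\psi_0+S)/\vareps))$.

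For $K_{\mathrm{opt}}^{\mathrm{sc}}$, I invoke the first part of Lemma~\ref{lem:boundsearchsteps}: in each outer iteration the repeat-loop of Algorithm~\ref{alg:accellinesearch} executes at most $\lceil \log_{\gamma_{\mathrm{dec}}}(\underline{L}/L_g)\rceil$ trials, each costing essentially a constant number of $(g,\nabla g)$ queries (one at the tentative $\vx^{(k+1)}$, one at $\vy^{(k)}$, plus the gradient used in \eqref{eq:xkprecision}). Multiplying by $K$ yields the stated $K_{\mathrm{opt}}^{\mathrm{sc}}$ bound, and under $\underline{L}=\Theta(L_g)$ the first factor collapses to a universal constant, leaving $O(\sqrt{\kappa}\log((\psi_0+S)/\vareps))$.

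For $T_{\mathrm{opt}}^{\mathrm{sc}}$ I sum $T_k$ from Proposition~\ref{lem:inner-comp} over $k=0,\dots,K-1$. Under $\underline{L}=\Theta(L_g)$, the prefactor in \eqref{eq:num-oracle-k} simplifies to $\sqrt{1+L_h/L_g}$, and the coefficient $\sqrt{L_g+L_h+L_h^2/\underline{L}}$ inside the log becomes $\Theta(\sqrt{L_g}(1+L_h/L_g))$. The $k=0$ term is handled by the first case of Lemma~\ref{lem:bd-Phi-diff}, which bounds $\sqrt{\Phi(\vx^{(0)};\vy^{(0)},\eta_0)-\Phi(\vx^{(1)}_*;\vy^{(0)},\eta_0)}$ by $\dist(\vzero,\partial F(\vx^{(0)}))/\sqrt{2\underline{L}}$, reproducing the $T'$ term. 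For $k\geqslant 1$ I use the second case of Lemma~\ref{lem:bd-Phi-diff} together with the explicit formulas \eqref{eq:choice-eps-k}–\eqref{eq:choice-delta-k}: the ratios $\vareps_{k-1}/\vareps_k$ and $\delta_{k-1}/\delta_k = 1/\sqrt{1-c\alpha_{k-1}}\leqslant 1/\sqrt{1-c}$ are bounded by constants depending only on $c$, while $\delta_k/\vareps_k = (k+1)\sqrt{2(\psi_0+S)/\mu}/\vareps_0$. Substituting these ratios and absorbing constants, the logarithmic argument in $T_k$ becomes $O((k+1)C_\kappa/\sqrt{\kappa})$ up to the $(1+L_h/L_g)$ factor, where the exact grouping of $\sqrt{\kappa}$, $L_g+L_h$, $\vareps_0$, $c$, $\gamma_{\mathrm{dec}}$ and $\sqrt{2(\psi_0+S)/\mu}$ matches the defined $C_\kappa$ in \eqref{eq:def-T0-Ckappa}.

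Summing, $\sum_{k=1}^{K-1}T_k = O\bigl(\sqrt{1+L_h/L_g}\cdot K\cdot(\log C_\kappa + \log K)\bigr)$ because $\sum_{k=1}^{K-1}\log((k+1)/\sqrt{\kappa}) = O(K\log(K/\sqrt{\kappa}))$. With $K=O(\sqrt{\kappa}\log((\psi_0+S)/\vareps))$, one has $\sqrt{1+L_h/L_g}\cdot K = O\bigl(\sqrt{(L_g+L_h)/\mu}\log((\psi_0+S)/\vareps)\bigr)$ and $\log K = O(\log\sqrt{\kappa}+\log\log((\psi_0+S)/\vareps))$; the first piece is absorbed into $\log C_\kappa$, yielding exactly \eqref{eq:comp-opt-scvx}. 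The main obstacle is the bookkeeping in this last step: verifying that every constant produced by the ratios $\delta_{k-1}/\vareps_k$, $\delta_k/\vareps_k$ (which mix $c$, $\gamma_{\mathrm{dec}}$, $\vareps_0$, $\psi_0+S$, $\mu$, $\sqrt{\kappa}$, and $L_g+L_h$) lands inside $C_\kappa$ as defined and nothing stray remains outside the $\log C_\kappa + \log\log$ structure. The only subtle convergence step is the lower bound $1-c\alpha_{k-1}\geqslant 1-c$ used to make ratios of $\delta$-quantities dimensionless, which is legitimate because $\alpha_k\leqslant 1$ by Lemma~\ref{lemma:cond-alpha-gamma}.
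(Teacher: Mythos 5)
Your proposal is correct and follows essentially the same route as the paper: bound the outer iteration count via the linear decay $\psi_{k+1}\leqslant(1-c/\sqrt{\kappa})^{k+1}(\psi_0+S)$ from \eqref{eq:rate-psi-k-1} and \eqref{eq:rate-alpha-mu}, count $(g,\nabla g)$ queries via Lemma~\ref{lem:boundsearchsteps}, and sum the per-iteration bounds $T_k$ of Proposition~\ref{lem:inner-comp} after controlling $\Phi(\vx^{(k)};\vy^{(k)},\eta_k)-\Phi(\vx^{(k+1)}_*;\vy^{(k)},\eta_k)$ with Lemma~\ref{lem:bd-Phi-diff} and the ratio bound corresponding to \eqref{eq:bd-log-frac}. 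The only cosmetic difference is that the paper carries the line-search factor $\lceil\log_{\gamma_{\mathrm{dec}}}(\underline{L}/L_g)\rceil$ explicitly into the $(h,\nabla h)$ count as well, which is $O(1)$ under $\underline{L}=\Theta(L_g)$ and hence immaterial.
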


\begin{proof}
Let $\textstyle K_1 = \min_k \left\{k: \prod_{j=0}^{k-1} (1-c\alpha_j) \left(\psi_0 + S\right)\leqslant \vareps\right\}$. Then by the definition of $\psi_k$ in Theorem~\ref{thm:main_corollary} and \eqref{eq:rate-psi-k-1}, $\vx^{(K_1)}$ is an $\vareps$-optimal solution of \eqref{eq:cvx-composite}. 
%
Also, let $K_1'= \left\lceil\frac{\log\frac{\psi_0 + S}{\vareps}} {\log 1/(1-c/\sqrt\kappa)}\right\rceil$. Since $\alpha_j \geqslant  1/\sqrt\kappa$ by \eqref{eq:rate-alpha-mu}, we have 
$
\prod_{j=0}^{K_1'-1} (1-c\alpha_j)\left(\psi_0 + S\right)\leqslant  (1-c/\sqrt\kappa)^{K_1'}\left(\psi_0 + S\right)\leqslant \varepsilon,
$ 
which means $K_1 \leqslant K_1'$. 


By Lemma~\ref{lem:boundsearchsteps}, Algorithm~\ref{alg:accellinesearch} will stop after at most $\lceil \log_{\gamma_{\mathrm{dec}}}\frac{\underline{L}}{L_g}\rceil$ iterations with  $(g, \nabla g)$ queried only twice in each iteration. Hence, until an $\vareps$-optimal solution is found, the total number of queries to  $(g, \nabla g)$ by Algorithm~\ref{alg:iAPG} is at most $2\lceil \log_{\gamma_{\mathrm{dec}}}\frac{\underline{L}}{L_g}  \rceil K_1'  $, which implies \eqref{eq:up-bd-K} when $\underline{L} = \Theta(L_g)$.

In addition, for $k\geqslant 0$, the number of queries to $(h, \nabla h)$ needed to compute $\vx^{(k+1)}$ satisfying \eqref{eq:xkprecision} is at most $T_k$ given in \eqref{eq:num-oracle-k}. Since Algorithm~\ref{alg:accellinesearch} will stop after at most $\lceil \log_{\gamma_{\mathrm{dec}}}\frac{\underline{L}}{L_g}\rceil$ iterations by Lemma~\ref{lem:boundsearchsteps}, the number of queries to $(h, \nabla h)$ in the $k$th iteration of Algorithm~\ref{alg:iAPG} is no more than $\lceil \log_{\gamma_{\mathrm{dec}}}\frac{\underline{L}}{L_g}\rceil T_k$. Hence, applying~\eqref{eq:low-bd-Phi-k} to the right-hand side of \eqref{eq:num-oracle-k}, we can show that the total number of queries to $(h, \nabla h)$ before finding an $\vareps$-optimal solution is at most
\vspace{-0.1cm}
\begin{equation}\label{eq:argu-T-opt}
\begin{aligned}
T_{\mathrm{opt}}^{\mathrm{sc}} =& ~  \textstyle O\left(\left\lceil \log_{\gamma_{\mathrm{dec}}}\frac{\underline{L}}{L_g}\right\rceil \sqrt{1+\frac{L_h}{\underline{L}}}\log\frac{\sqrt{L_g+L_h+L_h^2/\underline L}\sqrt{\frac{1}{2\underline L} } \dist\big(\vzero, \partial F(\vx^{(0)})\big)}{\vareps_0}\right) \\
&~ \textstyle + \sum_{k=1}^{K_1-1} O\left(\left\lceil \log_{\gamma_{\mathrm{dec}}}\frac{\underline{L}}{L_g}\right\rceil \sqrt{1+\frac{L_h}{\underline{L}}}\log\frac{\sqrt{L_g+L_h+L_h^2/\underline L}\sqrt{\frac{1}{2\underline L} } \left(\vareps_{k-1} + \frac{3L_g(\delta_k +\delta_{k-1}) }{ \gamma_{\mathrm{dec}}\sqrt c} \right)}{\vareps_k}\right).
\end{aligned}
\vspace{-0.1cm}
\end{equation}
By the definition of $\delta_k$ in \eqref{eq:choice-delta-k} and the choice of $\vareps_k$ in \eqref{eq:choice-eps-k}, we have
\vspace{-0.1cm}
\begin{align}\label{eq:bd-log-frac}
\textstyle \frac{\vareps_{k-1} + \frac{3L_g(\delta_k +\delta_{k-1}) }{ \gamma_{\mathrm{dec}}\sqrt c}}{\vareps_k} = &~ \textstyle \frac{(k+1)}{k(1-c\alpha_{k-1})}+\frac{3L_g(k+1)}{\vareps_0\gamma_{\mathrm{dec}}\sqrt c}  \sqrt{\frac{2(\psi_0 + S)}{\mu}} + \frac{3L_g(k+1)}{\vareps_0\gamma_{\mathrm{dec}}\sqrt c (1-c\alpha_{k-1})}  \sqrt{\frac{2(\psi_0 + S)}{\mu}}\nonumber\\
\leqslant &~ \textstyle \frac{2}{1-c} + \frac{3L_g(2-c)(k+1)}{\vareps_0\gamma_{\mathrm{dec}}\sqrt c (1-c)}  \sqrt{\frac{2(\psi_0 + S)}{\mu}}
\vspace{-0.1cm}
\end{align}
for any $k\geqslant 1$, where the inequality comes from $\alpha_{k-1}\leqslant 1$. Substituting \eqref{eq:bd-log-frac} into \eqref{eq:argu-T-opt} and using the facts that $K_1 \leqslant   K_1' $ and $\underline{L} = \Theta(L_g)$, we obtain the desired result in \eqref{eq:comp-opt-scvx} and complete the proof.
\end{proof}


The theorem below establishes the oracle complexity for producing an $\vareps$-stationary point. 
\begin{theorem}[Oracle complexity to obtain an $\vareps$-stationary solution]\label{thm:oracle-crit-scvx}
Suppose all assumptions in Theorem~\ref{thm:oracle-opt-scvx} hold. Then for any $\vareps>0$, Algorithm~\ref{alg:iAPG} with $\underline{L} = \Theta(L_g)$ and the optional steps enabled can produce an $\vareps$-stationary solution of \eqref{eq:cvx-composite} by $K_{\mathrm{crit}}^{\mathrm{sc}}$ queries to $(g, \nabla g)$ and $T_{\mathrm{crit}}^{\mathrm{sc}}$ queries to $(h, \nabla h)$, where
\vspace{-0.1cm}
\begin{eqnarray}
&&\textstyle K_{\mathrm{crit}}^{\mathrm{sc}} = O\left(\left\lceil \log_{\gamma_{\mathrm{dec}}}\frac{\underline{L}}{L_g}\right\rceil \cdot \left\lceil \frac{\log\frac{C_L^2(\psi_0 + S)}{\vareps^2}} {\log 1/(1-c/\sqrt\kappa)}\right\rceil\right) = O\left(\sqrt\kappa\log\frac{C_L^2(\psi_0 + S)}{\vareps^2}\right), \label{eq:up-bd-K-crit}\\
&&\textstyle T_{\mathrm{crit}}^{\mathrm{sc}}=O\left(T'+\sqrt{\frac{L_g+L_h}{\mu}}\log\frac{C_L^2(\psi_0 + S)}{\vareps^2} \left(\log C_\kappa + \log\log\frac{C_L^2(\psi_0 + S)}{\vareps^2}\right) \right). \label{eq:comp-crit-scvx}
\vspace{-0.1cm}
\end{eqnarray}
In the above, $\kappa$ is defined in Lemma~\ref{lem:prod-alpha}, $\psi_0$ is in Theorem~\ref{thm:main_corollary}, $C_L$ is defined in Lemma~\ref{lem:bd-dist-grad}, $T'$ and $C_\kappa$ are given in \eqref{eq:def-T0-Ckappa}, and the big-Os hide universal constants depending only on $\gamma_{\mathrm{dec}}$, $\gamma_{\mathrm{inc}}$ and $c$.
\end{theorem}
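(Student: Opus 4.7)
The plan is to reduce the $\vareps$-stationarity guarantee to an $\vareps'$-optimality guarantee via Lemma~\ref{lem:bd-dist-grad}, then invoke Theorem~\ref{thm:oracle-opt-scvx} with a correspondingly tighter tolerance. Concretely, Lemma~\ref{lem:bd-dist-grad} states that $\dist(\vzero, \partial F(\widetilde\vx^{(k+1)}))\leqslant C_L\sqrt{2(F(\vx^{(k+1)})-F^*)}$ whenever the optional step is enabled, so it suffices to drive the optimality gap $F(\vx^{(k+1)})-F^*$ below $\vareps^2/(2C_L^2)$. Since $\psi_k$ upper bounds this gap (by its definition in Theorem~\ref{thm:main_corollary} together with the inequality $1-(1-c)\alpha_k\geqslant 0$ from Lemma~\ref{lemma:cond-alpha-gamma}), reaching $\psi_{k}\leqslant \vareps^2/(2C_L^2)$ is the target.

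Step one: apply Theorem~\ref{thm:oracle-opt-scvx} with $\vareps$ replaced by $\vareps'=\vareps^2/(2C_L^2)$. This immediately converts $\log\frac{\psi_0+S}{\vareps}$ in \eqref{eq:up-bd-K} and \eqref{eq:comp-opt-scvx} into $\log\frac{2C_L^2(\psi_0+S)}{\vareps^2}$, yielding the bounds $K_{\mathrm{crit}}^{\mathrm{sc}}$ and $T_{\mathrm{crit}}^{\mathrm{sc}}$ stated in \eqref{eq:up-bd-K-crit} and \eqref{eq:comp-crit-scvx}, up to absorbing the constant factor $\log 2$ inside the big-$O$.

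Step two: separately account for the cost of the optional calls to Algorithm~\ref{alg:seekstationary}. By the second part of Lemma~\ref{lem:boundsearchsteps}, after $t$ outer iterations of Algorithm~\ref{alg:iAPG} the total number of line-search iterations inside Algorithm~\ref{alg:seekstationary} is at most $t+1+\frac{1}{\log\gamma_{\mathrm{dec}}^{-1}}\log\frac{L_g+L_h}{\underline{L}}$, and each such iteration issues only one query to $(g,\nabla g)$ and one to $(h,\nabla h)$. Setting $t=K_{\mathrm{crit}}^{\mathrm{sc}}$, the additional queries are $O(K_{\mathrm{crit}}^{\mathrm{sc}}+\log\frac{L_g+L_h}{\underline{L}})$ for each oracle, which is dominated by the main terms in \eqref{eq:up-bd-K-crit} and \eqref{eq:comp-crit-scvx} (for the $(h,\nabla h)$ count, $K_{\mathrm{crit}}^{\mathrm{sc}}=O(\sqrt{\kappa}\log\frac{C_L^2(\psi_0+S)}{\vareps^2})$ is absorbed into the $\sqrt{(L_g+L_h)/\mu}\log\frac{C_L^2(\psi_0+S)}{\vareps^2}$ term since $L_g\leqslant L_g+L_h$).

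The only mildly delicate point is verifying that once $\psi_k\leqslant \vareps^2/(2C_L^2)$ holds, the optional return test $\dist(\vzero,\partial F(\widetilde\vx^{(k+1)}))\leqslant \vareps$ in Algorithm~\ref{alg:iAPG} actually triggers; this is immediate from Lemma~\ref{lem:bd-dist-grad} combined with $F(\vx^{(k+1)})-F^*\leqslant \psi_k\leqslant \vareps^2/(2C_L^2)$. No new convergence argument is needed beyond what was already developed for Theorem~\ref{thm:oracle-opt-scvx}, so there is no substantial obstacle; the proof is essentially a bookkeeping exercise: replace $\vareps$ by $\vareps^2/(2C_L^2)$ in the statements of Theorem~\ref{thm:oracle-opt-scvx} and add the $O(K_{\mathrm{crit}}^{\mathrm{sc}})$ overhead from the stationarity-seeking step.
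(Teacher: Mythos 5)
Your proposal is correct and follows essentially the same route as the paper: the paper's own proof likewise combines Lemma~\ref{lem:bd-dist-grad} with the decay bound \eqref{eq:rate-psi-k-1} to reduce $\vareps$-stationarity to driving $\prod_{j}(1-c\alpha_j)(\psi_0+S)$ below $\vareps^2/(2C_L^2)$, bounds the resulting iteration count $K_2$ by the same ceiling expression, and then reuses the query-counting from Theorem~\ref{thm:oracle-opt-scvx} plus the $O(K_2)$ overhead of Algorithm~\ref{alg:seekstationary}. The only cosmetic difference is that you invoke Theorem~\ref{thm:oracle-opt-scvx} as a black box with tolerance $\vareps^2/(2C_L^2)$ while the paper re-runs the same argument inline; both are valid since the inexactness schedule \eqref{eq:choice-eps-k} does not depend on the target accuracy.
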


\begin{proof}
It follows from the definition of $\psi_k$, \eqref{ineq:stationary0} and \eqref{eq:rate-psi-k-1} that
\vspace{-0.1cm}
\begin{equation}\label{eq:bd-stationary-noise}
\dist\big(\vzero, \partial F(\widetilde\vx^{(k+1)})\big) \leqslant \textstyle C_L \sqrt{2\prod_{j=0}^k (1-c\alpha_j)\left(\psi_0 + S\right)}, ~\forall\,k\geqslant 0. 
\vspace{-0.1cm}
\end{equation}
Let $K_2 = \min_k \left\{k: \prod_{j=0}^{k-1} (1-c\alpha_j) \left(\psi_0 + S\right)\leqslant \frac{\vareps^2}{2C_L^2}\right\}$. Then $\widetilde\vx^{(K_2)}$ is an $\vareps$-stationary point. 
Also, let $K_2'= \left\lceil \frac{\log\frac{2(\psi_0 + S)C_L^2}{\vareps^2}} {\log 1/(1-c/\sqrt\kappa)}\right\rceil$. Since $\alpha_j \geqslant  1/\sqrt\kappa$ by \eqref{eq:rate-alpha-mu}, we have 
$
\prod_{j=0}^{K_2'-1} (1-c\alpha_j)\left(\psi_0 + S\right)\leqslant  (1-c/\sqrt\kappa)^{K_2'}\left(\psi_0 + S\right)\leqslant \frac{\vareps^2}{2C_L^2},
$ 
which means $K_2 \leqslant K_2'$. 

By Lemma~\ref{lem:boundsearchsteps}, Algorithm~\ref{alg:accellinesearch} will stop after at most $\lceil \log_{\gamma_{\mathrm{dec}}}\frac{\underline{L}}{L_g}\rceil$ iterations with  $(g, \nabla g)$ queried only twice in each iteration, and 
also, the total number of queres to $(g, \nabla g)$ by 
Algorithm~\ref{alg:seekstationary} is at most  $O(K_2)$. Hence, until an $\vareps$-stationary solution is found, the total number of queries to  $(g, \nabla g)$ by Algorithm~\ref{alg:iAPG} is at most $2\lceil \log_{\gamma_{\mathrm{dec}}}\frac{\underline{L}}{L_g}  \rceil K_2'+O(K_2')  $, which implies \eqref{eq:up-bd-K-crit} when $\underline{L} = \Theta(L_g)$.

In addition, similar to \eqref{eq:argu-T-opt}, we have
\vspace{-0.2cm}
\begin{equation}\label{eq:argu-T-crit-1}
\begin{aligned}
T_{\mathrm{crit}}^{\mathrm{sc}} =& ~  \textstyle O\left(\left\lceil \log_{\gamma_{\mathrm{dec}}}\frac{\underline{L}}{L_g}\right\rceil \sqrt{1+\frac{L_h}{\underline{L}}}\log\frac{\sqrt{L_g+L_h+L_h^2/\underline{L}}\sqrt{\frac{1}{2\underline L} } \dist\big(\vzero, \partial F(\vx^{(0)})\big)}{\vareps_0}\right) \\
&~ \textstyle  + \sum_{k=1}^{K_2-1} O\left(\left\lceil \log_{\gamma_{\mathrm{dec}}}\frac{\underline{L}}{L_g}\right\rceil \sqrt{1+\frac{L_h}{\underline{L}}}\log\frac{\sqrt{L_g+L_h+L_h^2/\underline{L}}\sqrt{\frac{1}{\underline L} } \left(\vareps_{k-1} + \frac{3L_g(\delta_k +\delta_{k-1}) }{ \gamma_{\mathrm{dec}}\sqrt c} \right)}{\vareps_k}\right)\end{aligned}
\vspace{-0.2cm}
\end{equation}
By \eqref{eq:bd-log-frac} and $K_2 \leqslant K_2'$, we obtain the result in \eqref{eq:comp-crit-scvx} and complete the proof.
\end{proof}

\subsection{Oracle complexity in the convex case}\label{sec:rate-compl-cvx}
In this subsection, we consider the convex case, i.e., $\mu=0$. Though the complexity result will not be used in later sections, the result has its own merit. The following technical lemma is needed in our convergence analysis. It is obtained by applying inequality $\sqrt{a+b}\leqslant \sqrt{a}+\sqrt{b}$ for $a,b\geqslant 0$ to the conclusion of Lemma 1 in \cite{schmidt2011convergence}.
\begin{lemma}\label{lem:bd-u}
Let $\{u_k\}_{k\geqslant 1}$ be a sequence of nonnegative numbers. Suppose 
$u_k^2 \leqslant C + \sum_{i=1}^k \lambda_i u_i, \forall\, k\geqslant 1,$
where $C\geqslant 0$ is a constant and $\lambda_i \geqslant 0$ for all $i\geqslant 1$. Then $u_k\leqslant \sum_{i=1}^k\lambda_i + \sqrt{C}, \forall\, k\geqslant 1$.
\end{lemma}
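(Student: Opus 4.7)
The plan is exactly what the authors hint at in the sentence preceding the lemma: invoke the slightly sharper bound proved as Lemma~1 in \cite{schmidt2011convergence} and then loosen it using subadditivity of the square root. Specifically, under the hypothesis $u_k^2 \leqslant C + \sum_{i=1}^k \lambda_i u_i$ with $u_k,\lambda_i\geqslant 0$ and $C\geqslant 0$, that lemma gives
$$u_k \;\leqslant\; \tfrac{1}{2}\sum_{i=1}^k \lambda_i \,+\, \sqrt{\,C + \bigl(\tfrac{1}{2}\textstyle\sum_{i=1}^k \lambda_i\bigr)^2\,},\qquad \forall\,k\geqslant 1.$$

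From here, the argument is a single line. The elementary inequality $\sqrt{a+b}\leqslant \sqrt{a}+\sqrt{b}$ for $a,b\geqslant 0$ (immediate by squaring), applied with $a=C$ and $b=(\tfrac{1}{2}\sum_{i=1}^k \lambda_i)^2$, yields
$$\sqrt{\,C+\bigl(\tfrac{1}{2}\textstyle\sum_{i=1}^k \lambda_i\bigr)^2\,} \;\leqslant\; \sqrt{C}+\tfrac{1}{2}\sum_{i=1}^k \lambda_i.$$
Substituting this back into Schmidt's bound and combining with the $\tfrac{1}{2}\sum_{i=1}^k \lambda_i$ sitting outside the square root collapses the two half-sums into the full sum, producing the claimed estimate $u_k \leqslant \sum_{i=1}^k \lambda_i + \sqrt{C}$.

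There is essentially no obstacle, since the heavy lifting is absorbed into the cited lemma. If instead a self-contained proof were desired, I would use the standard max-substitution trick: set $v_k := \max_{1\leqslant i\leqslant k} u_i$ attained at some index $i^\ast\leqslant k$, note that
$$v_k^2 = u_{i^\ast}^2 \leqslant C + \sum_{i=1}^{i^\ast} \lambda_i u_i \leqslant C + v_k\sum_{i=1}^k \lambda_i,$$
solve the resulting quadratic inequality in $v_k$ via $v_k \leqslant \tfrac{1}{2}(\Lambda_k+\sqrt{\Lambda_k^2+4C})$ with $\Lambda_k:=\sum_{i=1}^k\lambda_i$, and finish by the same $\sqrt{a+b}\leqslant \sqrt{a}+\sqrt{b}$ step, concluding with $u_k \leqslant v_k \leqslant \Lambda_k + \sqrt{C}$.
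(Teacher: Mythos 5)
Your primary argument is correct and is exactly the paper's own route: the lemma is stated in the paper as following from Lemma~1 of \cite{schmidt2011convergence} combined with $\sqrt{a+b}\leqslant\sqrt{a}+\sqrt{b}$, which is precisely what you carry out. The self-contained max-substitution alternative you sketch is also valid, but the main proof matches the paper's.
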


Like Theorem~\ref{thm:main_corollary}, the convergence of Algorithm~\ref{alg:iAPG} in the convex case can be derived from \eqref{eq:main}.

\begin{theorem}\label{thm:main_cvx_rate} 
Algorithm~\ref{alg:iAPG} guarantees that, for $k \geqslant 0$, 
\vspace{-0.1cm}
\begin{equation}\label{eq:dist-zk-term}
\textstyle 	\|\vx^*-\vz^{(k+1)}\| \leqslant 2\sum_{t=0}^{k} \frac{\eta_t\vareps_t}{\alpha_t} + \sqrt{\frac{2\eta_0(1-\alpha_0)}{\alpha_0^2}\phi_0}, \text{ and } 
\vspace{-0.2cm}
\end{equation}
\vspace{-0.2cm}
\begin{equation}\label{eq:rate-phiK}
\textstyle \phi_{k+1}\leqslant \frac{4\tau^2}{(k+2)^2}\left[\frac{1-\alpha_0}{\alpha_0^2} + \sqrt\tau \sum_{t=0}^{k}{\vareps_t}(t+\frac{1}{\alpha_0}) \left(\textstyle \frac{2\sqrt\tau}{\underline{L}}\sum_{j=0}^{t} \vareps_j(j+\frac{1}{\alpha_0}) + \sqrt{\frac{2\eta_0(1-\alpha_0)}{\alpha_0^2}\phi_0}\right)\right],
\vspace{-0.1cm}
\end{equation}
where 
	$\phi_k: = F(\vx^{(k)})-F^*+\frac{\gamma_{k}}{2}\|\vx^*-\vz^{(k)}\|^2, \forall\, k\geqslant 0$ 
and $\tau$ is defined in Lemma~\ref{lem:prod-alpha}.
\end{theorem}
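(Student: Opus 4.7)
The plan is to start from the one-step inequality \eqref{eq:main}, exploit the simplification that $\mu=0$ gives to the recursion for $\gamma_k$, telescope into a weighted sum, and then disentangle the resulting implicit bound on $\|\vx^*-\vz^{(k+1)}\|$ using the auxiliary Lemma~\ref{lem:bd-u}. Finally I would substitute back to derive the bound on $\phi_{k+1}$.

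Concretely, first observe that when $\mu=0$ the updating rule in Line~4 of Algorithm~\ref{alg:accellinesearch} yields $\gamma_{k+1}=(1-\alpha_k)\gamma_k$, so dividing \eqref{eq:main} by $\gamma_{k+1}$ and using $\alpha_k/\gamma_{k+1}=\eta_k/\alpha_k$ (since $\gamma_{k+1}=\alpha_k^2/\eta_k$) gives the telescoping inequality
\[
\textstyle \phi_{k+1}/\gamma_{k+1}\;\leqslant\;\phi_k/\gamma_k+(\eta_k\vareps_k/\alpha_k)\,\|\vx^*-\vz^{(k+1)}\|.
\]
Unrolling from $k$ down to $0$ yields
\[
\textstyle \phi_{k+1}\leqslant (\gamma_{k+1}/\gamma_0)\phi_0+\gamma_{k+1}\sum_{t=0}^{k}(\eta_t\vareps_t/\alpha_t)\,\|\vx^*-\vz^{(t+1)}\|.
\]

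Next, since $F(\vx^{(k+1)})\geqslant F^*$ we have $\phi_{k+1}\geqslant \tfrac{\gamma_{k+1}}{2}\|\vx^*-\vz^{(k+1)}\|^2$. Together with the identity $1/\gamma_0=\eta_0(1-\alpha_0)/\alpha_0^2$ (obtained from $\gamma_1=(1-\alpha_0)\gamma_0=\alpha_0^2/\eta_0$), dividing through by $\gamma_{k+1}$ gives the implicit quadratic inequality
\[
\textstyle \tfrac{1}{2}\|\vx^*-\vz^{(k+1)}\|^2\leqslant \eta_0(1-\alpha_0)\phi_0/\alpha_0^2+\sum_{t=0}^{k}(\eta_t\vareps_t/\alpha_t)\,\|\vx^*-\vz^{(t+1)}\|.
\]
Applying Lemma~\ref{lem:bd-u} with $u_{t+1}=\|\vx^*-\vz^{(t+1)}\|$, $\lambda_{t+1}=2\eta_t\vareps_t/\alpha_t$, and $C=2\eta_0(1-\alpha_0)\phi_0/\alpha_0^2$ produces exactly the bound \eqref{eq:dist-zk-term}.

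For \eqref{eq:rate-phiK}, plug the bound \eqref{eq:dist-zk-term} back into the telescoped inequality for $\phi_{k+1}$, and then control each factor using Lemma~\ref{lemma:cond-alpha-gamma} and the convex case of Lemma~\ref{lem:prod-alpha}. Specifically, $1/\alpha_t\leqslant (t+1/\alpha_0)\sqrt\tau$, $\eta_t\leqslant 1/\underline{L}$, and $\alpha_k\leqslant 2\sqrt\tau/(k+2)$ with $1/\eta_k\leqslant L_g/\gamma_{\mathrm{dec}}$ together imply $\gamma_{k+1}=\alpha_k^2/\eta_k\leqslant 4\tau^2\underline{L}/(k+2)^2$ and $\gamma_{k+1}/\gamma_0\leqslant 4\tau^2(1-\alpha_0)/(\alpha_0^2(k+2)^2)$. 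Assembling these estimates—$\gamma_{k+1}\cdot\eta_t/\alpha_t\leqslant 4\tau^2\sqrt\tau(t+1/\alpha_0)/(k+2)^2$ and $\eta_t/\alpha_t\leqslant \sqrt\tau(t+1/\alpha_0)/\underline{L}$—delivers precisely \eqref{eq:rate-phiK}.

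The only subtle step is the second one, where the distance $\|\vx^*-\vz^{(t+1)}\|$ to be bounded appears on both sides of the telescoped inequality; that is where Lemma~\ref{lem:bd-u} is indispensable. After that, the remaining work is purely a matter of bookkeeping with the quantities $\tau$, $\alpha_0$, $\eta_0$, $\gamma_{\mathrm{dec}}$, and $\underline{L}$ so that the resulting expression matches the stated form. No new estimates beyond Lemmas~\ref{lemma:cond-alpha-gamma}, \ref{lem:prod-alpha}, \ref{lem:bd-u} and inequality \eqref{eq:main} are required.
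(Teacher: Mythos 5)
Your proposal is correct and follows essentially the same route as the paper's proof: telescoping \eqref{eq:main} with the $\mu=0$ identity $\gamma_{k+1}=(1-\alpha_k)\gamma_k=\alpha_k^2/\eta_k$, dropping $F(\vx^{(k+1)})-F^*\geqslant 0$ to get the implicit quadratic bound on $\|\vx^*-\vz^{(k+1)}\|$, resolving it via Lemma~\ref{lem:bd-u}, and substituting back with the bounds from Lemmas~\ref{lemma:cond-alpha-gamma} and~\ref{lem:prod-alpha}. Your normalized form $\phi_k/\gamma_k$ is just the product form $\prod_{t}(1-\alpha_t)$ used in the paper written differently, and the final bookkeeping matches.
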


\begin{proof}
By the definition of $\phi_k$, we can rewrite \eqref{eq:main} as 
$\phi_{k+1} \leqslant (1-\alpha_k)\phi_k + \vareps_k \alpha_k\|\vx^*-\vz^{(k+1)}\|.$
Applying this inequality recursively gives
\vspace{-0.1cm}
\begin{eqnarray}\label{eq:main1-cvx}
	\phi_{k+1} &\leqslant & \textstyle \prod_{t=0}^{k}(1-\alpha_t)\phi_0 + \sum_{t=0}^{k}\Big(\prod_{j=t+1}^{k}(1-\alpha_j)\Big) \vareps_t\alpha_t\|\vx^*-\vz^{(t+1)}\|\cr
	&=& \textstyle \gamma_{k+1}\frac{\eta_0(1-\alpha_0)}{\alpha_0^2} + \gamma_{k+1}\sum_{t=0}^{k} \frac{\eta_t\vareps_t}{\alpha_t}\|\vx^*-\vz^{(t+1)}\|,
	\vspace{-0.1cm}
\end{eqnarray}
where the equality follows from $1-\alpha_k = \frac{\alpha_{k}^2}{\alpha_{k-1}^2}\frac{\eta_{k-1}}{\eta_k}, \forall\, k\geqslant 1$ and the updating equation of $\gamma_{k+1}$. The equation in \eqref{eq:main1-cvx} together with the definition of $\phi_k$ 
implies
\vspace{-0.1cm}
\begin{equation}\label{eq:dist-zk}
\textstyle 	\|\vx^*-\vz^{(k+1)}\|^2 \leqslant \frac{2\eta_0(1-\alpha_0)}{\alpha_0^2}\phi_0 + 2\sum_{t=0}^{k} \frac{\eta_t\vareps_t}{\alpha_t}\|\vx^*-\vz^{(t+1)}\|, \forall\, k\geqslant 0.
\vspace{-0.1cm}
\end{equation}

Applying Lemma~\ref{lem:bd-u} to \eqref{eq:dist-zk} gives \eqref{eq:dist-zk-term}, which, together with \eqref{eq:main1-cvx}, further implies
\vspace{-0.1cm}
\begin{equation}\label{eq:main2-cvx}
\textstyle \phi_{k+1}\leqslant \gamma_{k+1}\left[\frac{\eta_0(1-\alpha_0)}{\alpha_0^2} + \sum_{t=0}^{k}\frac{\eta_t\vareps_t}{\alpha_t} \left(\textstyle 2\sum_{j=0}^{t} \frac{\eta_j\vareps_j}{\alpha_j} + \sqrt{\frac{2\eta_0(1-\alpha_0)}{\alpha_0^2}\phi_0}\right)\right].
\vspace{-0.1cm}
\end{equation}
Notice $\gamma_{k+1}=\frac{\alpha_{k}^2}{\eta_{k}}$ and $\frac{\eta_j}{\eta_k}<{\tau}$ for all $k$ and $j\geqslant 0$. We have from \eqref{eq:rate-alpha-mu} and \eqref{eq:main2-cvx} that 
\vspace{-0.1cm}
\begin{equation}\label{eq:main3-cvx}
\textstyle \phi_{k+1}\leqslant \frac{4\tau^2}{(k+2)^2}\left[\frac{1-\alpha_0}{\alpha_0^2} + \sum_{t=0}^{k}\frac{\vareps_t}{\alpha_t} \left(\textstyle 2\sum_{j=0}^{t} \frac{\eta_j\vareps_j}{\alpha_j} + \sqrt{\frac{2\eta_0(1-\alpha_0)}{\alpha_0^2}\phi_0}\right)\right].
\vspace{-0.1cm}
\end{equation}
Plug into \eqref{eq:main3-cvx} the lower bound of $\alpha_{k}$ in \eqref{eq:rate-alpha-mu} and the upper bound of $\eta_i$ in \eqref{eq:cond-alpha-gamma}. We have the assertion of the theorem.
\end{proof}

Below we specify the choice of the inexactness $\{\vareps_k\}_{k\geqslant 0}$ and establish the oracle complexity results. To do so, let $\vareps_0>0$  and $\delta>0$ be any constants and define the following quantities
\vspace{-0.1cm}
\begin{eqnarray}\label{eq:eps-cvx}
\vareps_k &=& \textstyle \frac{\vareps_0}{(k+1)^{2+\delta}}, \forall\, k\geqslant 0, \quad 
S_\delta = \textstyle \sum_{k=0}^\infty \vareps_k(k+\frac{1}{\alpha_0}) = \sum_{k=0}^\infty \frac{\vareps_0}{(k+1)^{2+\delta}} (k+\frac{1}{\alpha_0})< \infty.
\vspace{-0.1cm}
\end{eqnarray}
We will use Proposition~\ref{lem:inner-comp} to bound the number of queries to $(h, \nabla h)$. Similar to the previous subsection, we first  bound $\Phi(\vx^{(k)}; \vy^{(k)},\eta_k) - \Phi(\vx^{(k+1)}_*; \vy^{(k)},\eta_k)$. Let 
$\textstyle X=\left\{\vx: \|\vx -\vx^*\| \leqslant \frac{2\sqrt\tau}{\underline{L}}S_\delta + \sqrt{\frac{2\eta_0(1-\alpha_0)}{\alpha_0^2}\phi_0}\right\},$ 
and $D_X:=\max_{\vx_1, \vx_2\in X}\|\vx_1-\vx_2\|$ be the diameter of $X$. Also, define
\vspace{-0.1cm}
\begin{equation}\label{eq:def-B-Phi}
B_\Phi=\max\left\{\big(\vareps_0 + {\textstyle L_g D_X(1+ \frac{2}{\gamma_{\mathrm{dec}}} }) \big)^2,\  \dist\big(\vzero, \partial F(\vx^{(0)})\big)^2 \right\}.
\vspace{-0.1cm}
\end{equation}


\begin{lemma}\label{lem:bd-Phi-diff-cvx}
Suppose $\{\vareps_k\}_{k\geqslant 0}$ in Algorithm~\ref{alg:iAPG} are given in \eqref{eq:eps-cvx}. Let $\vx^{(k+1)}_*$ and $\Phi$ be defined by~\eqref{eq:def-Phi}. Algorithm~\ref{alg:iAPG} guarantees that (i) $\vx^{(k)}$, $\vy^{(k)}$, $\vz^{(k)}\in X$ for all $k\geqslant 0$ and that (ii) $\Phi(\vx^{(k)}; \vy^{(k)},\eta_k) - \Phi(\vx^{(k+1)}_*; \vy^{(k)},\eta_k)\leqslant \frac{B_\Phi}{2\underline{L}} , \forall\, k\geqslant 0$, where $B_\Phi$ is defined in \eqref{eq:def-B-Phi}.
\end{lemma}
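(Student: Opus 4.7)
The plan is to first establish (i) by induction on $k$, using Theorem~\ref{thm:main_cvx_rate} to bound $\|\vx^*-\vz^{(k)}\|$ together with the fact that $\vx^{(k)}$ and $\vy^{(k)}$ are convex combinations of preceding iterates; and then to use (i) to control $\dist(\vzero,\partial\Phi(\vx^{(k)};\vy^{(k)},\eta_k))$ and invoke the strong convexity of $\Phi(\cdot;\vy^{(k)},\eta_k)$ to derive (ii).

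For (i), the key tool is the unconditional bound \eqref{eq:dist-zk-term}. Combining it with $\eta_t \leq 1/\underline L$ from \eqref{eq:cond-alpha-gamma} and $1/\alpha_t \leq \sqrt\tau(t+1/\alpha_0)$ from \eqref{eq:rate-alpha-mu}, and then using the definition of $S_\delta$ in \eqref{eq:eps-cvx}, gives $\sum_{t=0}^{k}\eta_t\vareps_t/\alpha_t \leq \frac{\sqrt\tau}{\underline L}\sum_{t=0}^{k}\vareps_t(t+1/\alpha_0) \leq \frac{\sqrt\tau}{\underline L} S_\delta$, so $\|\vx^*-\vz^{(k+1)}\| \leq \frac{2\sqrt\tau}{\underline L}S_\delta+\sqrt{\frac{2\eta_0(1-\alpha_0)}{\alpha_0^2}\phi_0}$, and therefore $\vz^{(k+1)}\in X$ for every $k\geq 0$. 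For the base case $k=0$, Line~4 of Algorithm~\ref{alg:accellinesearch} with $\mu=0$ yields $\alpha_0^2/\eta_0=(1-\alpha_0)\gamma_0$, so $\frac{2\eta_0(1-\alpha_0)}{\alpha_0^2}\phi_0=\frac{2\phi_0}{\gamma_0}\geq \|\vx^{(0)}-\vx^*\|^2$ because $\phi_0\geq \frac{\gamma_0}{2}\|\vx^{(0)}-\vx^*\|^2$, hence $\vz^{(0)}=\vx^{(0)}\in X$. Since $X$ is a closed ball about $\vx^*$ and therefore convex, an induction closes the argument: given $\vx^{(k)},\vz^{(k)}\in X$, the point $\vy^{(k)}$ is their convex combination (Line~5 of Algorithm~\ref{alg:accellinesearch}) and lies in $X$, and $\vx^{(k+1)}=(1-\alpha_k)\vx^{(k)}+\alpha_k\vz^{(k+1)}$ is likewise a convex combination of two members of $X$.

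For (ii), the case $k=0$ reproduces \eqref{eq:low-bd-Phi-case-k=0}: since $\vy^{(0)}=\vx^{(0)}$, one has $\Phi(\vx^{(0)};\vy^{(0)},\eta_0)=H(\vx^{(0)})$, and minimizing the quadratic lower bound of $\Phi(\cdot;\vy^{(0)},\eta_0)$ over $\vxi\in\partial H(\vx^{(0)})$ gives $\Phi(\vx^{(0)};\vy^{(0)},\eta_0)-\Phi(\vx^{(1)}_*;\vy^{(0)},\eta_0)\leq \frac{1}{2\underline L}\dist(\vzero,\partial F(\vx^{(0)}))^2\leq \frac{B_\Phi}{2\underline L}$ by the definition of $B_\Phi$. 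For $k\geq 1$, reuse the three-term estimate \eqref{eq:low-bd-dist-Phi-scvx-pre} from Lemma~\ref{lem:bd-Phi-diff}; since $\vx^{(k)},\vy^{(k)},\vy^{(k-1)}\in X$ by (i) each of the three displacements appearing there is at most $D_X$, and $\vareps_{k-1}\leq \vareps_0$, so
\[
\dist\bigl(\vzero,\partial\Phi(\vx^{(k)};\vy^{(k)},\eta_k)\bigr)\leq \vareps_0+L_g D_X\bigl(1+\tfrac{2}{\gamma_{\mathrm{dec}}}\bigr).
\]
The $(1/\eta_k)$-strong-convexity inequality \eqref{eq:low-bd-Phi} together with $\eta_k\leq 1/\underline L$ then yields the claimed bound by the definition of $B_\Phi$ in \eqref{eq:def-B-Phi}.

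The main subtlety is ensuring the argument for (i) is not circular: one must verify that \eqref{eq:dist-zk-term} is produced from the unconditional recursion \eqref{eq:main} (which needs only that $\vx^{(k+1)}$ satisfies \eqref{eq:xkprecision}, not that any iterate is bounded a priori), and that $S_\delta$ is finite thanks to the $(2+\delta)$-summable choice of $\vareps_k$. Once (i) is in hand, (ii) is a routine adaptation of the strongly convex argument of Lemma~\ref{lem:bd-Phi-diff}, with the uniform diameter $D_X$ playing the role previously played by the decaying quantity $\delta_k$.
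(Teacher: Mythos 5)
Your proposal is correct and follows essentially the same route as the paper's proof: part (i) via the unconditional bound \eqref{eq:dist-zk-term} together with $\eta_t\leqslant 1/\underline L$, the lower bound on $\alpha_t$ from \eqref{eq:rate-alpha-mu}, and convexity of $X$ for the induction; part (ii) via \eqref{eq:low-bd-Phi-case-k=0} for $k=0$ and \eqref{eq:low-bd-dist-Phi-scvx-pre} with each displacement bounded by $D_X$, followed by the strong-convexity estimate \eqref{eq:low-bd-Phi}, for $k\geqslant 1$. The extra care you take with the base case $\vz^{(0)}=\vx^{(0)}\in X$ and with non-circularity is consistent with (and slightly more explicit than) the paper's argument.
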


\begin{proof}
It follows that $\vx^{(0)}=\vz^{(0)}\in X$ by the definition of $\phi_0$ in Theorem~\ref{thm:main_cvx_rate} and the relation $\frac{\alpha_0^2}{\eta_0} = (1-\alpha_0)\gamma_0$. From \eqref{eq:dist-zk-term}, \eqref{eq:eps-cvx}, \eqref{eq:rate-alpha-mu}, and \eqref{eq:cond-alpha-gamma}, we have 
\vspace{-0.1cm}
$$
\textstyle \|\vx^*-\vz^{(k+1)}\| \leqslant 2\sum_{t=0}^{k} \frac{\sqrt{\tau}(t+1/\alpha_0)\vareps_t}{\underline{L}} + \sqrt{\frac{2\eta_0(1-\alpha_0)}{\alpha_0^2}\phi_0}
\leqslant  \frac{2\sqrt\tau}{\underline{L}}S_\delta + \sqrt{\frac{2\eta_0(1-\alpha_0)}{\alpha_0^2}\phi_0}, 
\vspace{-0.1cm}
$$ 
so $\vz^{(k+1)}\in X,\forall\, k\geqslant 0$. Since $\vx^{(k+1)} = \alpha_k \vz^{(k+1)} + (1-\alpha_k)\vx^{(k)}$, by induction, we can show $\vx^{(k+1)}\in X,\forall\, k\geqslant 0$. From the updating equation of $\vy^{(k)}$, we also have $\vy^{(k)}\in X,\forall\, k\geqslant 0$. This proves assertion (i).

From Assertion (i) and \eqref{eq:low-bd-dist-Phi-scvx-pre}, we have 
$ \dist\big(\vzero, \partial \Phi(\vx^{(k)}; \vy^{(k)},\eta_k)\big) \leqslant \textstyle \vareps_0 + L_g D_X(1+\frac{2}{\gamma_{\mathrm{dec}}}),\forall\, k\geqslant 1,$
which, together with \eqref{eq:low-bd-Phi} and \eqref{eq:cond-alpha-gamma}, gives 
\vspace{-0.2cm}
\begin{align*}
\textstyle  \Phi(\vx^{(k)}; \vy^{(k)},\eta_k) - \Phi(\vx^{(k+1)}_*; \vy^{(k)},\eta_k) 
\leqslant &~{\textstyle \frac{1}{2\underline{L}} }\left( \textstyle \vareps_0 + L_g D_X(1+\frac{2}{\gamma_{\mathrm{dec}}})\right)^2, \forall\, k\geqslant 1. 
\vspace{-0.2cm}
\end{align*}
Hence, assertion (ii) holds for $k\geqslant 1$. For the case of $k=0$, we use \eqref{eq:low-bd-Phi-case-k=0} and complete the proof.
\end{proof}

Now we are ready to show the oracle complexity result to obtain an $\vareps$-optimal solution.
\begin{theorem}[Oracle complexity to obtain an $\vareps$-optimal solution] 
Suppose $\{\vareps_k\}_{k\geqslant 0}$ in Algorithm~\ref{alg:iAPG} are given in \eqref{eq:choice-eps-k}. Also, suppose $\vx^{(k+1)}$ is computed by applying Algorithm~\ref{alg:iAPG} to \eqref{eq:def-Phi} with the inputs given in \eqref{eq:calliAPG} and the optional steps enabled. Then for any $\vareps>0$, Algorithm~\ref{alg:iAPG} with $\underline{L} = \Theta(L_g)$ can produce an $\vareps$-optimal solution of \eqref{eq:cvx-composite} by $K_{\mathrm{opt}}^{\mathrm{cvx}}$ queries to $(g, \nabla g)$ and $T_{\mathrm{opt}}^{\mathrm{cvx}}$ queries to $(h, \nabla h)$, where
\vspace{-0.1cm}
\begin{eqnarray}
&&\textstyle K_{\mathrm{opt}}^{\mathrm{cvx}} \leqslant 2\left\lceil \log_{\gamma_{\mathrm{dec}}}\frac{\underline{L}}{L_g}\right\rceil\left(\left\lceil \frac{2\tau \sqrt{C_0}}{\sqrt\vareps} \right\rceil - 1\right)\label{eq:num-q-cvx-g},\\
&&\textstyle T_{\mathrm{opt}}^{\mathrm{cvx}} = O\left(\left\lceil \frac{2\tau \sqrt{C_0}}{\sqrt\vareps} \right\rceil \sqrt{1+\frac{L_h}{L_g}} \Big(\log\frac{\left(1+\frac{L_h}{L_g}\right)\sqrt{B_\Phi}}{\vareps_0}+(2+\delta)\log\frac{2\tau \sqrt{C_0}}{\sqrt\vareps}\Big)\right).\label{eq:num-q-cvx-h}
\vspace{-0.1cm}
\end{eqnarray}
In the inequality and equality above, $C_0=\frac{1-\alpha_0}{\alpha_0^2} + \sqrt{\tau}S_\delta \left(\textstyle \frac{2\sqrt\tau}{\underline{L}}S_\delta + \sqrt{\frac{2\eta_0(1-\alpha_0)}{\alpha_0^2}\phi_0}\right)$ and the big-Os hide universal constants depending only on $\gamma_{\mathrm{dec}}$ and $\gamma_{\mathrm{inc}}$.
\end{theorem}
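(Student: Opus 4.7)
The plan is to combine the convergence rate from Theorem~\ref{thm:main_cvx_rate}, the per-call complexity from Proposition~\ref{lem:inner-comp}, and the uniform bound on the Bregman gap from Lemma~\ref{lem:bd-Phi-diff-cvx} to translate the number of outer iterations needed for $\vareps$-optimality into the two oracle counts. The intermediate quantity I would compute is the smallest $K_1$ of outer iterations guaranteeing $F(\vx^{(K_1)}) - F^* \leqslant \vareps$.

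First I would simplify the right-hand side of \eqref{eq:rate-phiK}. Under the schedule $\vareps_k = \vareps_0/(k+1)^{2+\delta}$, the series $\sum_{t=0}^{\infty}\vareps_t(t+1/\alpha_0)$ converges to $S_\delta<\infty$, so the inner sum $\sum_{j=0}^t \vareps_j(j+1/\alpha_0)$ in \eqref{eq:rate-phiK} is uniformly bounded by $S_\delta$ in $t$, and after that bound is inserted, the outer sum is bounded by $S_\delta$ as well. Substituting yields
\[
\phi_{k+1} \leqslant \frac{4\tau^2 C_0}{(k+2)^2}\qquad\text{for all }k\geqslant 0,
\]
with $C_0$ exactly as in the statement. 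Since $\phi_{k+1}\geqslant F(\vx^{(k+1)}) - F^*$, setting the bound to be at most $\vareps$ and solving gives $K_1 = \lceil 2\tau\sqrt{C_0/\vareps}\rceil - 1$ outer iterations.

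Next I would count $(g,\nabla g)$ queries. By Lemma~\ref{lem:boundsearchsteps}, each call to Algorithm~\ref{alg:accellinesearch} terminates in at most $\lceil\log_{\gamma_{\mathrm{dec}}}(\underline L/L_g)\rceil$ inner line-search steps, and each such step evaluates $(g,\nabla g)$ at most twice (once to compute $\nabla g(\vy^{(k)})$ and once to check the sufficient-decrease condition at the candidate $\vx^{(k+1)}$). Multiplying by $K_1$ from the previous step gives \eqref{eq:num-q-cvx-g}. To count $(h,\nabla h)$ queries, I would apply Proposition~\ref{lem:inner-comp}: the inner call in \eqref{eq:calliAPG} produces $\vx^{(k+1)}$ satisfying \eqref{eq:xkprecision} in $T_k$ queries where $T_k$ depends logarithmically on $\sqrt{\Phi(\vx^{(k)};\vy^{(k)},\eta_k) - \Phi(\vx^{(k+1)}_*;\vy^{(k)},\eta_k)}/\vareps_k$. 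Lemma~\ref{lem:bd-Phi-diff-cvx}(ii) bounds the numerator uniformly by $\sqrt{B_\Phi/(2\underline L)}$, and substituting $\vareps_k = \vareps_0/(k+1)^{2+\delta}$ and $\underline L = \Theta(L_g)$ gives
\[
T_k = O\!\left(\sqrt{1+L_h/L_g}\Big[\log\tfrac{(1+L_h/L_g)\sqrt{B_\Phi}}{\vareps_0} + (2+\delta)\log(k+1)\Big]\right).
\]
Summing over $k=0,\ldots,K_1-1$ and using $\log(k+1)\leqslant\log K_1$ produces $K_1$ times the above bound, which matches \eqref{eq:num-q-cvx-h} after inserting the value of $K_1$.

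The main obstacle I anticipate is Step 1: the right-hand side of \eqref{eq:rate-phiK} has two nested sums and several competing constants, so care is needed to identify exactly the combination matching the stated $C_0$ without losing factors. A prerequisite is verifying that Lemma~\ref{lem:bd-Phi-diff-cvx}(i) applies so that its part (ii) can be invoked uniformly in $k$; this holds because Lemma~\ref{lem:bd-Phi-diff-cvx} is established under the schedule \eqref{eq:eps-cvx} that we use here. The remaining pieces are straightforward applications of Lemmas~\ref{lem:boundsearchsteps}, \ref{lem:bd-Phi-diff-cvx}, and Proposition~\ref{lem:inner-comp}.
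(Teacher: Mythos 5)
Your proposal is correct and follows essentially the same route as the paper's proof: bound the nested sums in \eqref{eq:rate-phiK} by $S_\delta$ to get $\phi_{k+1}\leqslant 4\tau^2 C_0/(k+2)^2$ and hence the outer iteration count, then combine Lemma~\ref{lem:boundsearchsteps} for the $(g,\nabla g)$ count with Proposition~\ref{lem:inner-comp} and Lemma~\ref{lem:bd-Phi-diff-cvx}(ii) for the $(h,\nabla h)$ count. You also correctly read the inexactness schedule as \eqref{eq:eps-cvx} (the statement's reference to \eqref{eq:choice-eps-k} is a typo), matching what the paper's own proof uses.
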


\begin{proof}
From \eqref{eq:rate-phiK}, the definition of $\phi_k$ in Theorem~\ref{thm:main_cvx_rate}, the choice of $\{\vareps_k\}_{k\geqslant 0}$ in \eqref{eq:eps-cvx}, and the definition of $S_\delta$ in \eqref{eq:eps-cvx}, it follows that
\vspace{-0.2cm}
\begin{equation}\label{eq:complexity-phiK}
\textstyle F(\vx^{(k+1)})-F^*\leqslant \phi_{k+1}\leqslant \frac{4\tau^2}{(k+2)^2}\left[\frac{1-\alpha_0}{\alpha_0^2} + \sqrt{\tau} S_\delta \left(\textstyle \frac{2\sqrt\tau}{\underline{L}}S_\delta + \sqrt{\frac{2\eta_0(1-\alpha_0)}{\alpha_0^2}\phi_0}\right)\right]=  \frac{4\tau^2C_0}{(k+2)^2}.
\vspace{-0.2cm}
\end{equation}
Hence, in order to produce an $\vareps$-optimal solution, Algorithm~\ref{alg:iAPG} needs no more than $K_3=\left\lceil \frac{2\tau}{\sqrt\vareps} \sqrt{C_0} \right\rceil - 1$ 
iterations. By Lemma~\ref{lem:boundsearchsteps}, Algorithm~\ref{alg:accellinesearch} will stop after at most $\lceil \log_{\gamma_{\mathrm{dec}}}\frac{\underline{L}}{L_g}\rceil$ iterations with $(g, \nabla g)$ queried twice in each iteration. Hence, $K_{\mathrm{opt}}^{\mathrm{cvx}}$ satisfies \eqref{eq:num-q-cvx-g}. 

Moreover, by Proposition~\ref{lem:inner-comp}, Lemma~\ref{lem:bd-Phi-diff-cvx} and the choice of $\vareps_k$ in \eqref{eq:eps-cvx}, we have that the total number of the queries to $(h, \nabla h )$ is  
\vspace{-0.3cm}
\begin{eqnarray*}
T_{\mathrm{opt}}^{\mathrm{cvx}} &= & \textstyle \sum_{k=0}^{K_3-1} O\left(\left\lceil\log_{\gamma_{\mathrm{dec}}}\frac{\underline{L}}{L_g}\right\rceil\sqrt{1+\frac{L_h}{\underline{L}}} \log\frac{\sqrt{L_g+L_h+L_h^2/\underline{L}}\sqrt{\frac{B_\Phi}{2\underline L}}(k+1)^{2+\delta}}{\vareps_0}\right),
\vspace{-0.2cm}
\end{eqnarray*}
which, together with the facts that $K_3=\left\lceil \frac{2\tau}{\sqrt\vareps} \sqrt{C_0} \right\rceil - 1$ and that $\underline{L} = \Theta(L_g)$, gives the result in \eqref{eq:num-q-cvx-h}.
\end{proof}

\section{Inexact proximal augmented Lagrangian method}
In this section, we consider the affine-constrained composite problem
\vspace{-0.2cm}
\begin{equation}\label{eq:aff-prob}
\min_\vx \left\{G(\vx):=f(\vx) + r(\vx)\right\}, \st \vA_E\vx = \vb_E,\ \vA_I\vx \leqslant \vb_I, 
\vspace{-0.2cm}
\end{equation}
where $f$ is $L_f$-smooth and $\mu$-strongly convex with $\mu\geqslant 0$, and $r$ is closed convex and admits an easy proximal mapping. We assume that $(f, \nabla f)$ is significantly more expensive than $(\vA(\cdot), \vA^\top(\cdot))$ to evaluate, where $\vA=[\vA_E; \vA_I]$. We denote the Lagrangian multipliers to the affine constraints by $\vlam = [\vlam_E; \vlam_I]$ with $\vlam_E$ and $\vlam_I$ associated to the equality and inequality constraints, respectively. We also assume  \eqref{eq:aff-prob} has a finite optimal solution $\vx^*$ and has the corresponding Lagrange multiplier $\vlam^*= [\vlam_E^*; \vlam_I^*]$ satisfying 
\vspace{-0.2cm}
\begin{equation}\label{eq:kkt-cond}
	\vzero\in \partial G(\vx^*) + \vA^\top\vlam^*; \quad \vA_E\vx^*- \vb_E=\vzero, \ \vA_I\vx^*- \vb_I \leqslant \vzero; \quad  \vlam_I^* \geqslant \vzero,~\langle \vlam_I^*, \vA_I\vx^*- \vb_I \rangle = 0.
	\vspace{-0.2cm}
\end{equation}
Our goal is to find a solution that satisfies \eqref{eq:kkt-cond} with $\vareps$-precision, which is defined formally below.

\begin{definition}[$\vareps$-KKT solution]\label{def:eps-kkt}
For a given $\vareps\geqslant 0$, a point $\bar\vx\in \dom(G)$ is called an $\vareps$-KKT solution of \eqref{eq:aff-prob}, if there is a multiplier $\bar\vlam= [\bar\vlam_E; \bar\vlam_I]$ such that
\vspace{-0.1cm}
\begin{equation}\label{eq:eps-kkt-cond}
\dist\big(\vzero, \partial G(\bar\vx) + \vA^\top\bar\vlam\big) \leqslant \vareps; \sqrt{\|\vA_E\bar\vx- \vb_E\|^2 +  \|[\vA_I\bar\vx- \vb_I]_+\| ^2} \leqslant \vareps; \bar\vlam_I\geqslant\vzero,\,\|\bar\vlam_I \odot (\vA_I\bar\vx- \vb_I)\| \leqslant \vareps.
\vspace{-0.1cm}
\end{equation}
\end{definition}
\vspace{-0.3cm}

We consider an inexact proximal augmented Lagrangian method (iPALM) presented in Algorithm~\ref{alg:ipalm} for finding an $\vareps$-KKT solution for \eqref{eq:aff-prob}. At iteration $k$, based on the current solution $\vx^{(k)}$ and the Lagrangian multiplier $\vlam^{(k)}$, the iPALM generates the next solution $\vx^{(k+1)}$  by approximately solving  the following proximal augmented Lagrangian subproblem 
\vspace{-0.1cm}
\begin{equation}\label{eq:Psik}
\vx^{(k+1)}\approx\argmin\limits_{\vx}\big\{\textstyle \Psi_k(\vx):=\cL_{\beta_k}(\vx, \vlam^{(k)})+\frac{\rho_k}{2}\|\vx-\vx^{(k)}\|^2\big\}.
\vspace{-0.1cm}
\end{equation}
Here, $\cL_\beta$ is the classic augmented Lagrangian function of \eqref{eq:aff-prob} and has the following form: 
\vspace{-0.1cm}
$$
\textstyle \cL_\beta(\vx,\vlam) = G(\vx) + \langle\vlam_E, \vA_E\vx-\vb_E\rangle + \frac{\beta}{2}\|\vA_E\vx-\vb_E\|^2 + \frac{1}{2\beta}\left(\big\|[\beta(\vA_I\vx - \vb_I) + \vlam_I]_+\big\|^2 - \|\vlam_I\|^2\right).
\vspace{-0.1cm}
$$ 
In particular, the iPLAM finds $\vx^{(k+1)}$ as an $\bar\vareps_k$-stationary point of $\Psi_k$, i.e., \eqref{eq:find-x-k-cond} holds. We can apply the iAPG method in Algorithm~\ref{alg:iAPG} to find $\vx^{(k+1)}$. 
We will show that, compared to existing results, the oracle complexity of the iPALM that uses  the iAPG as a subroutine can significantly reduce the number of queries to $(f, \nabla f)$ while just slightly increasing the number of queries to $(\vA(\cdot), \vA^\top(\cdot))$.  


\begin{algorithm}[h]
{\small
\caption{Inexact proximal augmented Lagrangian method (iPALM)}
\label{alg:ipalm}
\DontPrintSemicolon
\textbf{Initialization:} $\vx^{(0)}\in\dom(G)$ and $\vlam^{(0)}$; set $k=0$\;
\While{Condition \eqref{eq:eps-kkt-cond} with $(\bar\vx, \bar\vlam) =(\vx^{(k)}, \vlam^{(k)})$ does not hold}{
Find $\vx^{(k+1)}\approx\argmin\limits_{\vx}\Psi_k(\vx)$ such that 
\vspace{-0.2cm}
\begin{equation}\label{eq:find-x-k-cond}
\dist\big(\vzero, \partial \Psi_k(\vx^{(k+1)})\big)\leqslant \bar\vareps_k,
\vspace{-0.2cm}
\end{equation}
where $\Psi_k$ is defined in \eqref{eq:Psik}.\;
Let $\vlam_E^{(k+1)}= \vlam_E^{(k)} + \beta_k (\vA_E\vx^{(k+1)} - \vb_E),\ \vlam_I^{(k+1)}= [\vlam_I^{(k)} + \beta_k (\vA_I\vx^{(k+1)} - \vb_I)]_+$, and
set $k\gets k+1$.
}	
}
\end{algorithm}

Before giving the details, 
we first present the following lemma that characterizes the relationship between 
two consecutive iterates of Algorithm~\ref{alg:ipalm}.

\begin{lemma}
Let $\vx^{(k)}$ and $\vlam^{(k)}$ be generated from Algorithm~\ref{alg:ipalm}. It holds for any $k\geqslant 0$ that
\vspace{-0.1cm}
\begin{eqnarray}\label{eq:ineq-update-x-2}
\bar\vareps_k\|\vx^{(k+1)}- \vx^*\| &\geqslant  &{\mu}\|\vx^{(k+1)}- \vx^*\|^2 \textstyle + \frac{1}{2\beta_k}\big(\|\vlam^{(k+1)} - \vlam^*\|^2 + \|\vlam^{(k+1)} - \vlam^{(k)}\|^2 - \|\vlam^{(k)} - \vlam^*\|^2\big) \cr
&& \textstyle + \frac{\rho_k}{2}\big(\|\vx^{(k+1)} - \vx^{(k)}\|^2 + \|\vx^{(k+1)} - \vx^{*}\|^2 - \|\vx^{(k)} - \vx^{*}\|^2\big).
\vspace{-0.1cm}
\end{eqnarray}
\end{lemma}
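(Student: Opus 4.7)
The plan is to combine the $\bar\vareps_k$-stationarity condition \eqref{eq:find-x-k-cond} with the $\mu$-strong convexity of $G$ and the KKT conditions \eqref{eq:kkt-cond}, and then use the specific form of the multiplier update to turn inner products into the squared-norm differences on the right-hand side via the polarization identity $\langle \va-\vb,\va-\vc\rangle=\tfrac12(\|\va-\vb\|^2+\|\va-\vc\|^2-\|\vb-\vc\|^2)$.

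First I would compute $\partial\Psi_k(\vx^{(k+1)})$ explicitly. A direct differentiation of $\cL_{\beta_k}(\cdot,\vlam^{(k)})$ shows that the smooth part contributes $\vA_E^\top[\vlam_E^{(k)}+\beta_k(\vA_E\vx-\vb_E)]+\vA_I^\top[\vlam_I^{(k)}+\beta_k(\vA_I\vx-\vb_I)]_+$, which at $\vx=\vx^{(k+1)}$ is exactly $\vA^\top\vlam^{(k+1)}$ by the multiplier update in Algorithm~\ref{alg:ipalm}. Thus \eqref{eq:find-x-k-cond} yields some $\vs^{(k)}\in\partial G(\vx^{(k+1)})$ and $\ve^{(k)}$ with $\|\ve^{(k)}\|\leqslant\bar\vareps_k$ satisfying $\vs^{(k)}+\vA^\top\vlam^{(k+1)}+\rho_k(\vx^{(k+1)}-\vx^{(k)})=\ve^{(k)}$. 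The KKT condition gives $-\vA^\top\vlam^*\in\partial G(\vx^*)$, so $\mu$-strong convexity of $G$ (inherited from $f$) yields $\langle \vs^{(k)}+\vA^\top\vlam^*,\vx^{(k+1)}-\vx^*\rangle\geqslant\mu\|\vx^{(k+1)}-\vx^*\|^2$. Substituting the expression for $\vs^{(k)}$ and using Cauchy--Schwarz on $\langle\ve^{(k)},\vx^{(k+1)}-\vx^*\rangle$ produces
\begin{equation*}
\bar\vareps_k\|\vx^{(k+1)}-\vx^*\|\geqslant \mu\|\vx^{(k+1)}-\vx^*\|^2+\langle\vlam^{(k+1)}-\vlam^*,\vA(\vx^{(k+1)}-\vx^*)\rangle+\rho_k\langle\vx^{(k+1)}-\vx^{(k)},\vx^{(k+1)}-\vx^*\rangle.
\end{equation*}
The polarization identity applied to the $\rho_k$ term reproduces the last line of \eqref{eq:ineq-update-x-2} verbatim.

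It remains to lower-bound $\langle\vlam^{(k+1)}-\vlam^*,\vA(\vx^{(k+1)}-\vx^*)\rangle$ by $\frac{1}{2\beta_k}\bigl(\|\vlam^{(k+1)}-\vlam^*\|^2+\|\vlam^{(k+1)}-\vlam^{(k)}\|^2-\|\vlam^{(k)}-\vlam^*\|^2\bigr)$. For the equality block this is straightforward: $\vA_E\vx^*=\vb_E$ and the update $\vlam_E^{(k+1)}-\vlam_E^{(k)}=\beta_k(\vA_E\vx^{(k+1)}-\vb_E)$ give $\vA_E(\vx^{(k+1)}-\vx^*)=\tfrac{1}{\beta_k}(\vlam_E^{(k+1)}-\vlam_E^{(k)})$, and polarization finishes the equality part exactly.

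The inequality block is the main obstacle and requires two small tricks. Expanding the inner product and using complementary slackness $\langle\vlam_I^*,\vA_I\vx^*-\vb_I\rangle=0$ together with feasibility $\vA_I\vx^*-\vb_I\leqslant\vzero$ and $\vlam_I^{(k+1)}\geqslant\vzero$ (so $-\langle\vlam_I^{(k+1)},\vA_I\vx^*-\vb_I\rangle\geqslant 0$) yields $\langle\vlam_I^{(k+1)}-\vlam_I^*,\vA_I(\vx^{(k+1)}-\vx^*)\rangle\geqslant\langle\vlam_I^{(k+1)}-\vlam_I^*,\vA_I\vx^{(k+1)}-\vb_I\rangle$. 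Setting $\vu=\vlam_I^{(k)}+\beta_k(\vA_I\vx^{(k+1)}-\vb_I)$ so that $\vlam_I^{(k+1)}=[\vu]_+$ is the Euclidean projection of $\vu$ onto the nonnegative orthant, and rewriting $\vA_I\vx^{(k+1)}-\vb_I=\tfrac{1}{\beta_k}(\vu-\vlam_I^{(k)})$, reduces the desired inequality (after polarization) to $\langle\vlam_I^{(k+1)}-\vlam_I^*,\vu-\vlam_I^{(k+1)}\rangle\geqslant 0$, which is precisely the projection variational inequality applied with $\vlam_I^*\geqslant\vzero$. Summing the equality and inequality blocks and plugging back completes the proof.
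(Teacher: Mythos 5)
Your proposal is correct and takes essentially the same route as the paper's proof: extract an $\bar\vareps_k$-small residual from the stationarity condition \eqref{eq:find-x-k-cond}, identify the gradient of the smooth augmented-Lagrangian part at $\vx^{(k+1)}$ with $\vA^\top\vlam^{(k+1)}$ via the multiplier update, combine with strong convexity and the KKT conditions, and convert the equality- and inequality-multiplier inner products into the $\frac{1}{2\beta_k}$ telescoping terms. The only (equivalent) presentational differences are that you obtain the factor $\mu$ in one shot from strong monotonicity of $\partial G$ rather than from two applications of the strong-convexity inequality at $\vx^{(k+1)}$ and $\vx^*$, and you justify the inequality-block bound self-containedly via the projection variational inequality for $[\cdot]_+$ where the paper cites \cite[Lemma~4]{xu2021iteration}.
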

\vspace{-0.2cm}

\begin{proof}
From \eqref{eq:find-x-k-cond}, there exists $\vv^{(k)}\in \partial_\vx \cL_{\beta_k}(\vx^{(k+1)}, \vlam^{(k)}) + \rho_k(\vx^{(k+1)} - \vx^{(k)})$ such that $\|\vv^{(k)}\|\leqslant \bar\vareps_k$, and thus by the $\mu$-strong convexity of $G$, we have
\vspace{-0.1cm}
\begin{eqnarray}\label{eq:ineq-update-x}
\langle \vv^{(k)}, \vx^{(k+1)}- \vx^*\rangle &\geqslant & \textstyle G(\vx^{(k+1)}) - G(\vx^*) + \frac{\mu}{2}\|\vx^{(k+1)}- \vx^*\|^2 + \langle \vA_E^\top\vlam_E^{(k)}, \vx^{(k+1)}- \vx^*\rangle\cr
&& \hspace{-1cm}+ \big\langle \beta_k\vA_E^\top(\vA_E\vx^{(k+1)} - \vb_E), \vx^{(k+1)}- \vx^* \big\rangle \\
&& \hspace{-1cm}+ \big\langle \vA_I^\top[\beta_k(\vA_I\vx^{(k+1)} - \vb_I) + \vlam_I^{(k)}]_+, \vx^{(k+1)}- \vx^* \big\rangle +  \langle \rho_k(\vx^{(k+1)} - \vx^{(k)}), \vx^{(k+1)}- \vx^*\rangle. \nonumber
\vspace{-0.1cm}
\end{eqnarray}
By the Cauchy-Schwarz inequality, it holds $\langle \vv^{(k)}, \vx^{(k+1)}- \vx^*\rangle\leqslant \|\vv^{(k)}\|\cdot \|\vx^{(k+1)}- \vx^*\|\leqslant \bar\vareps_k\|\vx^{(k+1)}- \vx^*\|$. Hence, by the update of $\vlam^{(k+1)}$ and the facts $\vA_E\vx^* =\vb_E$ and  $\vA_I\vx^* \leqslant \vb_I$, we obtain from \eqref{eq:ineq-update-x} that
\vspace{-0.1cm}
\begin{eqnarray}\nonumber
\bar\vareps_k\|\vx^{(k+1)}- \vx^*\| &\geqslant  & \textstyle G(\vx^{(k+1)}) - G(\vx^*) + \frac{\mu}{2}\|\vx^{(k+1)}- \vx^*\|^2 + \langle \vlam_E^{(k+1)}, \vA_E\vx^{(k+1)} - \vb_E\rangle\\\nonumber
&& \textstyle + \langle \vlam_I^{(k+1)}, \vA_I\vx^{(k+1)} - \vb_I\rangle+ \langle \rho_k(\vx^{(k+1)} - \vx^{(k)}), \vx^{(k+1)}- \vx^*\rangle\\\label{eq:ineq-update-x-1}
&=  & \textstyle G(\vx^{(k+1)}) - G(\vx^*) + \frac{\mu}{2}\|\vx^{(k+1)}- \vx^*\|^2 + \langle \vlam_E^{(k+1)}, \vA_E\vx^{(k+1)} - \vb_E\rangle\\\nonumber
&& \textstyle + \langle \vlam_I^{(k+1)}, \vA_I\vx^{(k+1)} - \vb_I\rangle+ \frac{\rho_k}{2}\big(\|\vx^{(k+1)} - \vx^{(k)}\|^2 + \|\vx^{(k+1)} - \vx^{*}\|^2 - \|\vx^{(k)} - \vx^{*}\|^2\big).
\vspace{-0.1cm}
\end{eqnarray}
Using the updating equation of  $\vlam^{(k+1)}$ again, we have 
\vspace{-0.1cm}
\begin{eqnarray}\label{eq:lam-term}
\langle \vlam_E^{(k+1)} - \vlam_E^*, \vA_E\vx^{(k+1)} - \vb_E\rangle  & = &  \textstyle \langle \vlam_E^{(k+1)} - \vlam_E^*, \frac{1}{\beta_k}(\vlam_E^{(k+1)} - \vlam_E^{(k)}) \rangle\cr
& = & \textstyle\frac{1}{2\beta_k}\left(\|\vlam_E^{(k+1)} - \vlam_E^*\|^2 + \|\vlam_E^{(k+1)} - \vlam_E^{(k)}\|^2 - \|\vlam_E^{(k)} - \vlam_E^*\|^2\right),
\vspace{-0.1cm}
\end{eqnarray}
and, by \cite[Lemma~4]{xu2021iteration}, it holds that
\vspace{-0.1cm}
\begin{eqnarray}\label{eq:lam-I-term}
\langle \vlam_I^{(k+1)} - \vlam_I^* , \vA_I\vx^{(k+1)} - \vb_I\rangle & \geqslant &\big\langle \vlam_I^{(k+1)} - \vlam_I^*, \max\big\{\textstyle -\frac{\vlam_I^{(k)}}{\beta_k},\ \vA_I\vx^{(k+1)} - \vb_I\big\}\big\rangle\cr
& = & \textstyle \langle \vlam_I^{(k+1)} - \vlam_I^*, \frac{1}{\beta_k}(\vlam_I^{(k+1)} - \vlam_I^{(k)}) \rangle\cr
& = & \textstyle\frac{1}{2\beta_k}\left(\|\vlam_I^{(k+1)} - \vlam_I^*\|^2 + \|\vlam_I^{(k+1)} - \vlam_I^{(k)}\|^2 - \|\vlam_I^{(k)} - \vlam_I^*\|^2\right).
\vspace{-0.1cm}
\end{eqnarray}

Adding \eqref{eq:lam-term} and \eqref{eq:lam-I-term} to \eqref{eq:ineq-update-x-1} gives
\vspace{-0.1cm}
\begin{eqnarray}\label{eq:all-term-I-E}
\bar\vareps_k\|\vx^{(k+1)}- \vx^*\| &\geqslant  & \textstyle G(\vx^{(k+1)}) - G(\vx^*) + \frac{\mu}{2}\|\vx^{(k+1)}- \vx^*\|^2 + \langle \vlam^*, \vA\vx^{(k+1)} - \vb\rangle\\
&& \textstyle + \frac{\rho_k}{2}\big(\|\vx^{(k+1)} - \vx^{(k)}\|^2 + \|\vx^{(k+1)} - \vx^{*}\|^2 - \|\vx^{(k)} - \vx^{*}\|^2\big)\cr
&& \textstyle + \frac{1}{2\beta_k}\big(\|\vlam^{(k+1)} - \vlam^*\|^2 + \|\vlam^{(k+1)} - \vlam^{(k)}\|^2 - \|\vlam^{(k)} - \vlam^*\|^2\big). \nonumber
\vspace{-0.1cm}
\end{eqnarray}
By the KKT conditions $\vzero\in \partial G(\vx^*) + \vA^\top \vlam^*$ and $\langle \vlam_I^*, \vA_I\vx^* - \vb_I\rangle =0$, it follows that 
\vspace{-0.1cm}
\begin{eqnarray*}
G(\vx^{(k+1)}) - G(\vx^*) + \langle \vlam^*, \vA\vx^{(k+1)} - \vb\rangle & =  & \textstyle G(\vx^{(k+1)}) - G(\vx^*) + \langle \vA^\top\vlam^*, \vx^{(k+1)} - \vx^*\rangle 
 \geqslant   \frac{\mu}{2}\|\vx^{(k+1)}- \vx^*\|^2,
 \vspace{-0.1cm}
\end{eqnarray*}
where the inequality holds from the $\mu$-strong convexity of $G$. Applying this inequality to \eqref{eq:all-term-I-E} gives the desired result.
\end{proof}

\subsection{Outer-iteration complexity}
In this subsection, we assume that \eqref{eq:find-x-k-cond} can be guaranteed. 
We specify the choices of $\{\beta_k\}_{k\geqslant 0}$, $\{\rho_k\}_{k\geqslant 0}$ and $\{\bar\vareps_k\}_{k\geqslant 0}$ and establish the outer-iteration complexity of Algorithm~\ref{alg:ipalm}. To do so, we first show the uniform boundedness of the primal-dual iterates below. 

\begin{lemma}[Boundedness of primal-dual iterates]\label{lem:bound-pd-iter}
Suppose $\beta_k = \beta_0\sigma^k$ and $\rho_k = \rho_0\sigma^{-k}, \forall\, k\geqslant 0$ for some $\beta_0>0, \rho_0>  0$ and $\sigma >1$ in Algorithm~\ref{alg:ipalm}. It holds for any $k\geqslant 0$ that
\vspace{-0.1cm}
\begin{equation}\label{eq:bd-x-lam}
\textstyle \sqrt{ \beta_0\rho_0\|\vx^{(k+1)} - \vx^*\|^2 + \|\vlam^{(k+1)} - \vlam^*\|^2 } \leqslant \sum_{i=0}^k \frac{2\beta_i\bar\vareps_i}{\sqrt{\beta_0\rho_0}} + \sqrt{\beta_0\rho_0\|\vx^{(0)} - \vx^{*}\|^2 + \|\vlam^{(0)} - \vlam^*\|^2}.
\vspace{-0.1cm}
\end{equation}
\end{lemma}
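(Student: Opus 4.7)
The plan is to telescope \eqref{eq:ineq-update-x-2} after exploiting the crucial identity $\beta_k\rho_k=\beta_0\rho_0$ induced by the choice $\beta_k=\beta_0\sigma^k$, $\rho_k=\rho_0\sigma^{-k}$. First I would drop the three nonnegative terms $\mu\|\vx^{(k+1)}-\vx^*\|^2$, $\tfrac{1}{2\beta_k}\|\vlam^{(k+1)}-\vlam^{(k)}\|^2$, and $\tfrac{\rho_k}{2}\|\vx^{(k+1)}-\vx^{(k)}\|^2$ on the right-hand side of \eqref{eq:ineq-update-x-2} and then multiply through by $2\beta_k$ to obtain
\begin{equation*}
2\beta_k\bar\vareps_k\|\vx^{(k+1)}-\vx^*\|\geqslant \|\vlam^{(k+1)}-\vlam^*\|^2-\|\vlam^{(k)}-\vlam^*\|^2+\beta_k\rho_k\bigl(\|\vx^{(k+1)}-\vx^*\|^2-\|\vx^{(k)}-\vx^*\|^2\bigr).
\end{equation*}
Because $\beta_k\rho_k\equiv\beta_0\rho_0$, the right-hand side becomes a clean first difference of the quantity $U_k:=\beta_0\rho_0\|\vx^{(k)}-\vx^*\|^2+\|\vlam^{(k)}-\vlam^*\|^2$.

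Next I would bound $\|\vx^{(k+1)}-\vx^*\|\leqslant \sqrt{U_{k+1}/(\beta_0\rho_0)}$ on the left so that the telescoping inequality reads
\begin{equation*}
U_{k+1}-U_k\leqslant \frac{2\beta_k\bar\vareps_k}{\sqrt{\beta_0\rho_0}}\sqrt{U_{k+1}}.
\end{equation*}
Summing for $i=0,\ldots,k$ gives
\begin{equation*}
U_{k+1}\leqslant U_0+\sum_{i=0}^k\frac{2\beta_i\bar\vareps_i}{\sqrt{\beta_0\rho_0}}\sqrt{U_{i+1}}.
\end{equation*}
Setting $u_j=\sqrt{U_{j}}$ (reindexed to start at $j=1$), this is exactly the hypothesis of Lemma~\ref{lem:bd-u} with $C=U_0$ and $\lambda_i=\tfrac{2\beta_{i-1}\bar\vareps_{i-1}}{\sqrt{\beta_0\rho_0}}$, so that lemma yields $\sqrt{U_{k+1}}\leqslant \sqrt{U_0}+\sum_{i=0}^k\tfrac{2\beta_i\bar\vareps_i}{\sqrt{\beta_0\rho_0}}$, which is precisely \eqref{eq:bd-x-lam}.

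The only non-routine step is the first one: noticing that multiplying \eqref{eq:ineq-update-x-2} by $2\beta_k$ turns the $\vx$-difference term into $\beta_0\rho_0$ times a plain first difference, which is what makes the product $U_k$ telescope cleanly and allows Lemma~\ref{lem:bd-u} to be invoked without any residual step-dependent weights. The remaining bookkeeping (dropping nonnegative terms, Cauchy--Schwarz-style relaxation $\|\vx^{(k+1)}-\vx^*\|\leqslant \sqrt{U_{k+1}/(\beta_0\rho_0)}$, and the reindexing needed to match the statement of Lemma~\ref{lem:bd-u}) is routine and requires no additional hypothesis on the schedule beyond $\sigma>1$ (indeed $\sigma>1$ is not actually used in the derivation itself, only the product identity $\beta_k\rho_k=\beta_0\rho_0$).
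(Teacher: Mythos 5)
Your proposal is correct and follows essentially the same route as the paper: multiply \eqref{eq:ineq-update-x-2} by $2\beta_k$, exploit $\beta_k\rho_k=\beta_0\rho_0$ to telescope the quantity $U_k=\beta_0\rho_0\|\vx^{(k)}-\vx^*\|^2+\|\vlam^{(k)}-\vlam^*\|^2$, bound $\|\vx^{(i+1)}-\vx^*\|\leqslant\sqrt{U_{i+1}/(\beta_0\rho_0)}$, and invoke Lemma~\ref{lem:bd-u} with $C=U_0$ and $\lambda_i=\frac{2\beta_{i-1}\bar\vareps_{i-1}}{\sqrt{\beta_0\rho_0}}$. The paper's argument is the same modulo the order in which the Cauchy--Schwarz-style relaxation and the summation are performed.
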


\begin{proof}
Multiplying $2\beta_k$ to both sides of \eqref{eq:ineq-update-x-2} gives
\vspace{-0.1cm}
\begin{eqnarray}\label{eq:ineq-update-x-2-1}
2\beta_k\bar\vareps_k\|\vx^{(k+1)}- \vx^*\| &\geqslant  &2\mu\beta_k\|\vx^{(k+1)}- \vx^*\|^2 \textstyle + \big(\|\vlam^{(k+1)} - \vlam^*\|^2 + \|\vlam^{(k+1)} - \vlam^{(k)}\|^2 - \|\vlam^{(k)} - \vlam^*\|^2\big) \cr
&& \textstyle + {\beta_0\rho_0}\big(\|\vx^{(k+1)} - \vx^{(k)}\|^2 + \|\vx^{(k+1)} - \vx^{*}\|^2 - \|\vx^{(k)} - \vx^{*}\|^2\big).
\vspace{-0.1cm}
\end{eqnarray}
Sum up \eqref{eq:ineq-update-x-2-1} to have
\vspace{-0.1cm}
$$\textstyle \beta_0\rho_0\|\vx^{(k+1)} - \vx^*\|^2 + \|\vlam^{(k+1)} - \vlam^*\|^2 \leqslant \sum_{i=0}^k 2\beta_i\bar\vareps_i\|\vx^{(i+1)}- \vx^*\| +  \beta_0\rho_0\|\vx^{(0)} - \vx^{*}\|^2 + \|\vlam^{(0)} - \vlam^*\|^2.
\vspace{-0.1cm}$$
We obtain \eqref{eq:bd-x-lam} by the inequality above and Lemma~\ref{lem:bd-u} with $u_k=\sqrt{\beta_0\rho_0\|\vx^{(k)} - \vx^*\|^2 + \|\vlam^{(k)} - \vlam^*\|^2}$, $\lambda_i=\frac{2\beta_{i-1}\bar\vareps_{i-1}}{\sqrt{\beta_0\rho_0}}$, and $C= \beta_0\rho_0\|\vx^{(0)} - \vx^{*}\|^2 + \|\vlam^{(0)} - \vlam^*\|^2$.
%
\end{proof}

\begin{lemma}\label{lem:ineq-a-b}
Let  $\sigma>1$ and $a\in (0,1)$. If $b \geqslant \frac{64}{a^2(\log\sigma)^4} \geqslant 1$,  then $(\log_\sigma b)^2 \leqslant a\cdot b$.
\end{lemma}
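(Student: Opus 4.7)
The plan is to recast the target inequality $(\log_\sigma b)^2 \leqslant ab$ into the equivalent form $\log b \leqslant \sqrt{a}\,(\log\sigma)\sqrt{b}$ (using $\log_\sigma b = \log b/\log\sigma$ and the fact that $b\geqslant 1$ ensures $\log b\geqslant 0$), and then analyze the single-variable function $\phi(b) := \sqrt{a}\,(\log\sigma)\sqrt{b} - \log b$ on $[b_0,\infty)$, where $b_0 := 64/(a^2(\log\sigma)^4)$.

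The first step is to show that $\phi$ is nondecreasing on $[b_0,\infty)$. A direct computation gives
$$
\phi'(b) = \frac{\sqrt{a}\,\log\sigma}{2\sqrt{b}} - \frac{1}{b},
$$
which is nonnegative precisely when $b \geqslant 4/(a(\log\sigma)^2)$. Introducing the change of variable $u := \sqrt{a}\,\log\sigma$, the hypothesis $b_0 \geqslant 1$ translates into $u^4 \leqslant 64$, i.e.\ $u \leqslant 2\sqrt{2}$. Then
$$
\frac{b_0}{4/(a(\log\sigma)^2)} = \frac{16}{u^2} \geqslant \frac{16}{8} = 2,
$$
so the left endpoint $b_0$ already lies in the region where $\phi$ is monotone nondecreasing.

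The second step is to verify $\phi(b_0)\geqslant 0$. Substituting $b_0 = 64/u^4$ reduces this to showing $\psi(u) := 8/u + 4\log u - \log 64 \geqslant 0$ for $u\in(0,2\sqrt{2}]$. Since $\psi'(u) = 4(u-2)/u^2$, the function $\psi$ attains its global minimum on $(0,\infty)$ at $u=2$, with value $\psi(2) = 4 + 4\log 2 - 6\log 2 = 4 - 2\log 2 > 0$. Combining the two steps yields $\phi(b)\geqslant 0$ for all $b\geqslant b_0$, which is exactly the desired inequality.

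I do not anticipate a serious obstacle here, since the lemma is essentially a routine calculus estimate. The only point worth flagging is that the constant $64$ in the hypothesis is tailored precisely so that the comparison $b_0 \geqslant 4/(a(\log\sigma)^2)$ holds automatically under the mild side condition $b_0\geqslant 1$; this is what lets the problem collapse to verifying a single numerical inequality at $u = 2$.
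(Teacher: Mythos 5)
Your proof is correct, but it takes a genuinely different route from the paper's. The paper introduces the auxiliary function $\theta(x)=\tfrac12(\log x)^2-x$, shows $\theta'(x)=\tfrac{\log x}{x}-1<0$ on all of $(0,\infty)$ from $\log x<x$, and concludes $(\log x)^2\leqslant 2x$ for $x\geqslant 1$; substituting $x=\sqrt b$ gives the universal bound $(\log_\sigma b)^2\leqslant \frac{8\sqrt b}{(\log\sigma)^2}$, and the hypothesis $b\geqslant \frac{64}{a^2(\log\sigma)^4}$ is then exactly the condition under which $\frac{8\sqrt b}{(\log\sigma)^2}\leqslant ab$ — so the constant $64$ falls out automatically and the side condition is only used to ensure $\sqrt b\geqslant 1$. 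You instead analyze the difference of the two sides of the (square-rooted) target inequality directly as a function of $b$, which forces you to check two things: that the left endpoint $b_0$ already lies in the monotone region (this is where you need $b_0\geqslant 1$, via $u\leqslant 2\sqrt2$), and that the endpoint value $\psi(u)=\tfrac8u+4\log u-\log 64$ is nonnegative, minimized at $u=2$ with value $4-2\log 2>0$. All of your computations check out — the derivative signs, the reduction to $u$, and the endpoint minimization are correct. The paper's argument is shorter and makes the provenance of the constant $64$ transparent; yours is more computational but has the minor virtue of showing the endpoint inequality holds with room to spare (the minimum value $4-2\log 2\approx 2.6$), indicating the constant is not tight.
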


\begin{proof}
Let $\theta(x)= \frac{1}{2}(\log x)^2 - x$. Then $\theta'(x) = \frac{1}{x}\log x - 1$.
By the fact $\log x < x$ for all $x > 0$, we have $\theta'(x) < 0, \forall\, x >0$ and thus $\theta$ is a decreasing function. Hence, $\theta(x) \leqslant \theta(1) <0, \forall\, x\geqslant 1$, which implies $(\log_\sigma x^2)^2 \leqslant \frac{8x}{(\log \sigma)^2}, \forall\, x \geqslant 1$. Take $x = \sqrt{b}$. We have $(\log_\sigma b)^2 \leqslant \frac{8\sqrt{b}}{(\log \sigma)^2} \leqslant a\cdot b$, where the second inequality is equivalent to the asssumption that $b \geqslant \frac{64}{a^2(\log\sigma)^4}$.
\end{proof}

By Lemmas~\ref{lem:bound-pd-iter} and \ref{lem:ineq-a-b}, we show below that Algorithm~\ref{alg:ipalm} can produce an $\vareps$-KKT point if $\{\bar\vareps_k\}_{k\geqslant 0}$ are chosen appropriately.

\begin{theorem}\label{thm:outer-iter}
Let $\beta_k$ and $\rho_k$ be defined as in Lemma~\ref{lem:bound-pd-iter}. Let $\bar\vareps=\frac{\vareps(\sigma-1)}{8(\sigma+1)}\min\{1, \sqrt{\beta_0\rho_0}\}$ and choose $\bar\vareps_k = \min\{\bar\vareps, \sqrt{\frac{\rho_0}{20\sigma}}\sigma^{-k}\}, \forall\, k\geqslant 0$ in Algorithm~\ref{alg:ipalm}. Then $\vx^{(K)}$ generated by Algorithm~\ref{alg:ipalm} is an $\vareps$-KKT point of \eqref{eq:aff-prob} with  $\vlam^{(K)}$ being the corresponding multiplier, where 
\vspace{-0.1cm}
\begin{equation}\label{eq:bd-out-iter}
\textstyle K=\max\left\{\left\lceil\log_\sigma\frac{4D_0\sqrt{\rho_0}}{\sqrt{\beta_0}\vareps}\right\rceil,\ \left\lceil\log_\sigma\frac{4D_0}{\beta_0\vareps}\right\rceil,\ \left\lceil\log_\sigma \frac{5(D_0 + \|\vlam^*\|)^2}{\beta_0\vareps}\right\rceil,\ \left\lceil2\log_\sigma \frac{8}{\vareps(\log\sigma)^2} \right\rceil - 1\right\}+1, \text{ and }
\vspace{-0.3cm}
\end{equation}
\vspace{-0.3cm}
\begin{equation}\label{eq:def-D0}
	D_0= \sqrt{\beta_0\rho_0\|\vx^{(0)} - \vx^{*}\|^2 + \|\vlam^{(0)} - \vlam^*\|^2}.
	\vspace{-0.3cm}
\end{equation} 
\end{theorem}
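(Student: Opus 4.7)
The plan is to verify all three $\vareps$-KKT conditions of Definition~\ref{def:eps-kkt} at $(\bar\vx,\bar\vlam) = (\vx^{(K)},\vlam^{(K)})$ using Lemma~\ref{lem:bound-pd-iter} to control the primal-dual iterates. The prescribed choice $\bar\vareps_k\leqslant\sqrt{\rho_0/(20\sigma)}\,\sigma^{-k}$ ensures $\beta_k\bar\vareps_k\leqslant\beta_0\sqrt{\rho_0/(20\sigma)}$ for every $k$, so \eqref{eq:bd-x-lam} telescopes to a uniform bound of the form
$$u_{k+1}:=\sqrt{\beta_0\rho_0\|\vx^{(k+1)}-\vx^*\|^2+\|\vlam^{(k+1)}-\vlam^*\|^2}\leqslant D_0+(k+1)\sqrt{\beta_0/(5\sigma)},$$
which grows only linearly in $k$. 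This is the main quantitative input; the rest is translating each KKT residual at iteration $K$ into something of the form (linear-in-$K$)$/\sigma^K$.

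For the stationarity residual, using \eqref{eq:find-x-k-cond} together with the multiplier updates to rewrite $\partial\Psi_{K-1}(\vx^{(K)})=\partial G(\vx^{(K)})+\vA^\top\vlam^{(K)}+\rho_{K-1}(\vx^{(K)}-\vx^{(K-1)})$ yields $\dist(\vzero,\partial G(\vx^{(K)})+\vA^\top\vlam^{(K)})\leqslant\bar\vareps_{K-1}+\rho_{K-1}\|\vx^{(K)}-\vx^{(K-1)}\|$. For primal feasibility, the equality-constraint update gives $\|\vA_E\vx^{(K)}-\vb_E\|=\|\vlam_E^{(K)}-\vlam_E^{(K-1)}\|/\beta_{K-1}$; a componentwise analysis of the $[\,\cdot\,]_+$ update (using $\vlam_I^{(k)}\geqslant\vzero$ inductively) shows $\|[\vA_I\vx^{(K)}-\vb_I]_+\|\leqslant\|\vlam_I^{(K)}-\vlam_I^{(K-1)}\|/\beta_{K-1}$, so the feasibility residual is at most $\|\vlam^{(K)}-\vlam^{(K-1)}\|/\beta_{K-1}$. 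For complementarity, on any coordinate $j$ with $(\vlam_I^{(K)})_j>0$ we have $(\vA_I\vx^{(K)}-\vb_I)_j=((\vlam_I^{(K)})_j-(\vlam_I^{(K-1)})_j)/\beta_{K-1}$, so $\|\vlam_I^{(K)}\odot(\vA_I\vx^{(K)}-\vb_I)\|\leqslant\|\vlam_I^{(K)}\|_\infty\,\|\vlam_I^{(K)}-\vlam_I^{(K-1)}\|/\beta_{K-1}\leqslant(\|\vlam^*\|+u_K)(u_K+u_{K-1})/\beta_{K-1}$. In each residual, the differences of iterates are controlled via triangle inequalities against $\vx^*,\vlam^*$ and the bound on $u_k$ from Step~1, while the prefactors $1/\beta_{K-1}$ and $\rho_{K-1}/\sqrt{\beta_0\rho_0}$ both decay like $\sigma^{-K}$.

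After this bookkeeping, requiring each of the three residuals to be at most $\vareps$ produces an implicit inequality of the form $\sigma^K\geqslant c_1(D_0,\|\vlam^*\|)/\vareps+c_2 K/\vareps$ (with the constants explaining each of the first three entries in the $\max$ in \eqref{eq:bd-out-iter}, after plugging in the specific choice of $\bar\vareps=\Theta(\vareps)$ and absorbing the factors of $\sigma$ and $\sqrt{\beta_0\rho_0}$ into the constants). The hard part, and the only genuine technical step, is absorbing the $K$-dependent right-hand side $c_2K/\vareps$: Lemma~\ref{lem:ineq-a-b} with $a=\Theta(\vareps)$ and $b=\sigma^K$ shows that once $\sigma^K\geqslant 64/(a^2(\log\sigma)^4)$, one has $(\log_\sigma\sigma^K)^2=K^2\leqslant a\sigma^K$, which is exactly the strength needed to dominate the linear-in-$K$ growth; this produces the fourth entry $\lceil2\log_\sigma(8/(\vareps(\log\sigma)^2))\rceil-1$ in \eqref{eq:bd-out-iter}. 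Taking the maximum of the four lower bounds (plus one to accommodate the gap between iteration index $K$ and $K-1$ appearing in the residuals) yields the claimed $K$. The ``$+1$'' offset and the precise form of the multiplicative constants (e.g., the $\min\{1,\sqrt{\beta_0\rho_0}\}$ inside $\bar\vareps$) will emerge from tracking the various $\sqrt{\rho_0/\beta_0}$ and $1/\beta_0$ factors carefully through the three residual bounds to ensure the final constants come out cleanly.
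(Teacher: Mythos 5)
Your overall strategy coincides with the paper's: bound the primal--dual iterates via Lemma~\ref{lem:bound-pd-iter}, express the three KKT residuals through \eqref{eq:find-x-k-cond} and the multiplier updates (your componentwise derivations of the feasibility and complementarity bounds are valid and reproduce \eqref{eq:primal-feas} and, in slightly different form, the bound the paper cites from \cite{li2021augmented}), and invoke Lemma~\ref{lem:ineq-a-b} to absorb polynomial-in-$K$ growth against $\sigma^{-K}$. The one substantive divergence is that you use only the linear-in-$k$ consequence of \eqref{eq:bd-x-lam} (coming from $\bar\vareps_i\leqslant\sqrt{\rho_0/(20\sigma)}\,\sigma^{-i}$) for \emph{all three} residuals, whereas the paper exploits the two pieces of the $\min$ in $\bar\vareps_k$ separately: for the stationarity and feasibility residuals it uses $\bar\vareps_i\leqslant\bar\vareps$, which turns \eqref{eq:bd-x-lam} into the geometric bound \eqref{eq:bd-x-lam-spec} and makes $\rho_{K-1}\|\vx^{(K)}-\vx^{(K-1)}\|$ and $\frac{1}{\beta_{K-1}}\|\vlam^{(K)}-\vlam^{(K-1)}\|$ split cleanly into a $K$-free piece $\frac{2\bar\vareps(\sigma+1)}{\sigma-1}$ (respectively divided by $\sqrt{\beta_0\rho_0}$) plus $2D_0\sqrt{\rho_0/\beta_0}\,\sigma^{1-K}$ (respectively $\frac{2D_0}{\beta_0}\sigma^{1-K}$); the linear-in-$k$ bound is reserved for the complementarity residual alone, where the $K^2/\sigma^K$ term is killed by Lemma~\ref{lem:ineq-a-b}. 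This split is exactly what the constants in the theorem are calibrated to: the factor $\min\{1,\sqrt{\beta_0\rho_0}\}$ in $\bar\vareps$ makes both $K$-free pieces equal $\vareps/4$, and the first two entries of the $\max$ in \eqref{eq:bd-out-iter} make the two $D_0\sigma^{1-K}$ pieces at most $\vareps/2$. Under your single-bound route, the stationarity and feasibility residuals acquire extra $K\sigma^{-K}$ terms that the paper's accounting never sees; they can still be absorbed by another application of Lemma~\ref{lem:ineq-a-b}, so you would prove a theorem of the same order, but the specific four entries of the $\max$ and the role of $\min\{1,\sqrt{\beta_0\rho_0}\}$ will not ``emerge from careful tracking'' as you hope --- they genuinely require switching to the $\bar\vareps$-based geometric bound for those two residuals. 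That is the one adjustment needed to turn your outline into a proof of the theorem as stated.
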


\begin{proof}
Since $\bar\vareps_i \leqslant \bar\vareps$ for $i\geqslant 0$, we have from \eqref{eq:bd-x-lam} that 
\vspace{-0.1cm}
\begin{equation}\label{eq:bd-x-lam-spec}
\textstyle \|\vx^{(k)}-\vx^*\| \leqslant \frac{2\bar\vareps(\sigma^k-1)}{\rho_0(\sigma-1)}+\frac{D_0}{\sqrt{\beta_0\rho_0}}, \quad \|\vlam^{(k)}-\vlam^*\| \leqslant \frac{2\bar\vareps \sqrt{\beta_0}(\sigma^k-1)}{\sqrt{\rho_0}(\sigma-1)}+D_0, ~\forall\,k\geqslant 0.
\vspace{-0.1cm}
\end{equation}
with $D_0$ defined in \eqref{eq:def-D0}. Hence, by the triangle inequality and \eqref{eq:bd-x-lam-spec}, it holds that
\vspace{-0.1cm}
\begin{equation*}
\textstyle \|\vx^{(k+1)}-\vx^{(k)}\| \leqslant \frac{2\bar\vareps(\sigma^{k+1}+\sigma^k-2)}{\rho_0(\sigma-1)}+\frac{2D_0}{\sqrt{\beta_0\rho_0}},\ \|\vlam^{(k+1)}-\vlam^{(k)}\| \leqslant \frac{2\bar\vareps \sqrt{\beta_0}(\sigma^{k+1}+\sigma^k-2)}{\sqrt{\rho_0}(\sigma-1)}+2D_0,
\vspace{-0.1cm}
\end{equation*}
and thus
\vspace{-0.1cm}
\begin{equation}\label{eq:bd-x-lam-spec-2}
\textstyle \rho_{K-1}\|\vx^{(K)}-\vx^{(K-1)}\| \leqslant \frac{2\bar\vareps(\sigma+1)}{\sigma-1}+\frac{2D_0\sqrt{\rho_0}\sigma^{1-K}}{\sqrt{\beta_0}},\ \frac{1}{\beta_{K-1}}\|\vlam^{(K)}-\vlam^{(K-1)}\| \leqslant \frac{2\bar\vareps (\sigma+1)}{\sqrt{\beta_0\rho_0}(\sigma-1)}+\frac{2D_0}{\beta_0}\sigma^{1-K}.
\vspace{-0.1cm}
\end{equation}
By the choice of $\bar\vareps$ and the definition of $K$ in \eqref{eq:bd-out-iter}, we have from \eqref{eq:bd-x-lam-spec-2} that
\vspace{-0.1cm}
\begin{equation}\label{eq:bd-x-lam-spec-3}
\textstyle \rho_{K-1}\|\vx^{(K)}-\vx^{(K-1)}\| \leqslant \frac{3\vareps}{4}, \quad \frac{1}{\beta_{K-1}}\|\vlam^{(K)}-\vlam^{(K-1)}\| \leqslant \frac{3\vareps}{4}.
\vspace{-0.1cm}
\end{equation}

Additionally,  since $\bar\vareps_i \leqslant \sqrt{\frac{\rho_0}{20\sigma}}\sigma^{-i}$ for $i\geqslant 0$, it is implied by \eqref{eq:bd-x-lam} that
$\|\vlam^{(k)}-\vlam^*\| \leqslant k\sqrt{\frac{\beta_0}{5\sigma}} + D_0$ and thus $\|\vlam^{(k)}\|  \leqslant k\sqrt{\frac{\beta_0}{5\sigma}} + D_0 + \|\vlam^*\|$, which further implies 
$ \|\vlam^{(k)}\|^2 \leqslant \frac{2\beta_0 k^2}{5\sigma} + 2(D_0 + \|\vlam^*\|)^2, \forall\, k\geqslant 0. $
Hence, 
\vspace{-0.1cm}
\begin{equation}\label{eq:bd-lam-sqr-2}
\textstyle \frac{1}{\beta_{K-1}}(\|\vlam^{(K)}\|^2 + \frac{1}{4}\|\vlam^{(K-1)}\|^2) \leqslant \frac{1}{\beta_{0}\sigma^{K-1}}(\frac{\beta_0 K^2}{2\sigma} + \frac{5}{2}(D_0 + \|\vlam^*\|)^2).
\vspace{-0.1cm}
\end{equation}
Since $K-1 \geqslant \log_\sigma \frac{5(D_0 + \|\vlam^*\|)^2}{\beta_0\vareps}$, it holds that $\frac{5}{2\beta_{0}\sigma^{K-1}} (D_0 + \|\vlam^*\|)^2 \leqslant \frac{\vareps}{2}$. Also, since $K\geqslant\left\lceil2\log_\sigma \frac{8}{\vareps(\log\sigma)^2} \right\rceil$, it holds that $\sigma^K \geqslant \frac{64}{\vareps^2(\log\sigma)^4}$, which implies $\frac{K^2}{\sigma^K} \leqslant \vareps$ according to Lemma~\ref{lem:ineq-a-b} with $a=\vareps$ and $b=\sigma^K$. 
Therefore, the right-hand side of \eqref{eq:bd-lam-sqr-2} is no more than $\vareps$, meaning that
\vspace{-0.1cm}
\begin{equation}\label{eq:bd-lam-sqr-3}
\textstyle \frac{1}{\beta_{K-1}}(\|\vlam^{(K)}\|^2 + \frac{1}{4}\|\vlam^{(K-1)}\|^2) \leqslant  \vareps.
\vspace{-0.1cm}
\end{equation}

Now from the updating equations of $\vx^{(k+1)}$ and $\vlam^{(k+1)}$, we have for any $k\geqslant 1$,
\begin{subequations}\label{eq:dist-grad}
\vspace{-0.1cm}
\begin{align}
\textstyle \dist\big(\vzero, \partial G(\vx^{(k)}) + \vA^\top\vlam^{(k)}\big) &\leqslant \bar\vareps_{k-1} + \rho_{k-1}\|\vx^{(k)}-\vx^{(k-1)}\|, \label{eq:dual-feas}\\
\textstyle \sqrt{\|\vA_E\vx^{(k)}-\vb_E\|^2 + \big\|[\vA_I\vx^{(k)}-\vb_I]_+\big\|^2} &\leqslant \textstyle \frac{1}{\beta_{k-1}}\|\vlam^{(k)}-\vlam^{(k-1)}\|, \label{eq:primal-feas}
\vspace{-0.1cm}
\end{align}
and, by \cite[Eqn.(3.18)]{li2021augmented}, we have
\vspace{-0.1cm}
\begin{equation}
\textstyle \|\vlam_I^{(k)} \odot (\vA_I\vx^{(k)}-\vb_I) \| \leqslant \frac{1}{\beta_{k-1}}\left(\|\vlam_I^{(k)}\|^2 + \frac{1}{4}\|\vlam_I^{(k-1)}\|^2\right).
\vspace{-0.1cm}
\end{equation}
\end{subequations}
Note that $\vlam_I^{(k)}\geqslant \vzero$  holds for any $k\geq1$ by the updating equation of $\vlam_I^{(k)}$. Moreover, because of \eqref{eq:bd-x-lam-spec-3} and \eqref{eq:bd-lam-sqr-3} and the fact that $\bar\vareps_k \leqslant \frac{\vareps}{4}$, the three inequalities in \eqref{eq:dist-grad} imply that $(\vx^{(K)}, \vlam^{(K)})$ satisfies the $\vareps$-KKT conditions of \eqref{eq:aff-prob}. 
\end{proof}

\subsection{Overall oracle complexity}
In this subsection, we discuss the details on how to ensure \eqref{eq:find-x-k-cond} and then characterize the total oracle complexity of Algorithm~\ref{alg:ipalm} to produce an $\vareps$-KKT point of \eqref{eq:aff-prob}. Define 
\vspace{-0.1cm}
\begin{eqnarray}\label{eq:def-g-k}
g_k(\vx) &=& \textstyle f(\vx) + \frac{\rho_k}{2}\|\vx-\vx^{(k)}\|^2\\\label{eq:def-h-k}
h_k(\vx) &=& \textstyle \langle\vlam_E^{(k)}, \vA_E\vx-\vb_E\rangle + \frac{\beta_k}{2}\|\vA_E\vx-\vb_E\|^2 + \frac{1}{2\beta_k}\left(\big\|[\beta_k(\vA_I\vx - \vb_I) + \vlam_I^{(k)}]_+\big\|^2 - \|\vlam_I^{(k)}\|^2\right).
\vspace{-0.1cm}
\end{eqnarray}
Then the iPALM subproblem \eqref{eq:Psik} can be written as 
\vspace{-0.1cm}
\begin{equation}\label{eq:prob-psi}
	\min_\vx\left\{\Psi_k(\vx)=g_k(\vx)+h_k(\vx)+r(\vx)\right\},
\vspace{-0.1cm}	
\end{equation} 
which is an instance of \eqref{eq:cvx-composite} with $g=g_k$ and $h=h_k$. This means that \eqref{eq:find-x-k-cond} can be ensured by approximately solving the iPALM subproblem \eqref{eq:prob-psi} using Algorithm~\ref{alg:iAPG}. This way, we can apply the complexity result in Theorem~\ref{thm:oracle-crit-scvx} to establish the oracle complexity for each outer iteration of Algorithm~\ref{alg:ipalm}. 

We adopt the following settings on solving each iPALM subproblem.

\begin{setting}[How to solve iPALM subproblems]\label{set:how-subprob}
In iteration $k$ of Algorithm~\ref{alg:ipalm}, Algorithm~\ref{alg:iAPG} is applied to find $\vx^{(k+1)}$ satisfying \eqref{eq:find-x-k-cond}. More precisely,  we compute $\vx^{(k+1)}$ by calling  the \emph{iAPG} method as
\vspace{-0.2cm}
\begin{eqnarray}\label{eq:calliAPG-iPALM}
	\vx^{(k+1)}=\mathrm{iAPG}\left(g_k, h_k, r, \vx^{(k)}, \eta_0, \gamma_0, \mu+\rho_k,\underline{L}, \{\vareps_t\}_{t\geqslant 0}\right),
\vspace{-0.2cm}	
\end{eqnarray}
where $\vareps_t$ is defined as in \eqref{eq:choice-eps-k} for $t\geqslant 1$ and\footnote{Again we take $\underline{L}=\Theta(L_f)$ in order to simplify our results, but our algorithm does not need to know $L_f$.} $\underline{L}=\Theta(L_f)$.
\end{setting}

For simplicity of our analysis, in the setting above,  the values of $\eta_0$, $\gamma_0$, $\gamma_{\mathrm{dec}}$, $\gamma_{\mathrm{inc}}$, $\underline{L} $, and $\vareps_0$ stay the same  across the calls of the iAPG method by different iterations of the iPALM. Also, we use the previous iPALM iterate $\vx^{(k)}$ as the starting point for solving the $k$-th iPALM subproblem.

\begin{setting}[Choice of parameters]\label{set:paras}
Given an $\vareps\in (0,1)$, we choose $\{\beta_k\}$, $\{\rho_k\}$, and $\{\bar\vareps_k\}$ in Algorithm~\ref{alg:ipalm} as the same as those in  
 Theorem~\ref{thm:outer-iter}.
\end{setting}

\vspace{0.2cm}
\noindent\textbf{Notation and some uniform bounds.}~~Under Settings~\ref{set:how-subprob} and \ref{set:paras}, to facilitate our analysis, we first give some notations that are used in this subsection.  Let $K$ be given in \eqref{eq:bd-out-iter}. We define
\begin{equation}\label{eq:def-bar-rho-beta}
\textstyle \underline{\rho}=\rho_{K-1},\ \overline\beta=\beta_{K-1}, \ B_\vx = \frac{2\bar\vareps(\sigma^K-1)}{\rho_0(\sigma-1)}+\frac{D_0}{\sqrt{\beta_0\rho_0}},\ B_\vlam = \frac{2\bar\vareps \sqrt{\beta_0}(\sigma^K-1)}{\sqrt{\rho_0}(\sigma-1)}+D_0, \ \underline{\vareps} = \min\{\bar\vareps, \sqrt{\frac{\rho_0}{20\sigma}}\sigma^{-K}\}
\end{equation}
where $D_0$ is given in \eqref{eq:def-D0}.

In order to apply Theorem~\ref{thm:oracle-crit-scvx} to the iPALM subproblem~\eqref{eq:prob-psi}, we define
\begin{align}\label{eq:def-kappa-D-k}
&\textstyle \kappa^{(k)}=\frac{L_f+ \rho_k}{\gamma_{\mathrm{dec}}(\mu+ \rho_k)}, \quad S^{(k)}=\frac{\sqrt{\kappa^{(k)}}}{2(1-c)^2\underline{L}}\sum_{t=0}^{\infty}\frac{\vareps_0^2}{(t+1)^{2}} < \infty,~\forall\, k < K, \\
& \label{eq:def-psi-0-k}
\textstyle \psi_0^{(k)}= \Psi_k(\vx^{(k)}) - \Psi_k^* + (1-(1-c)\alpha_0)\frac{\gamma_0}{2}\|\vx_*^{(k+1)}-\vx^{(k)}\|^2, ~\forall\, k < K.
\end{align}
with
$\vx_*^{(k+1)}=\argmin_\vx \Psi_k(\vx)$ and $\Psi_k^* = \min_\vx \Psi_k(\vx).$
Moreover, define
\begin{align}
&\textstyle L_{\Psi_k} = L_f+\rho_k+\beta_k\|\vA\|^2, \ C_L^{(k)} = \frac{L_{\Psi_k}}{\sqrt{\underline{L}}} + \sqrt{\frac{L_{\Psi_k}}{\gamma_{\mathrm{dec}}}} 
\label{eq:def-T0-Ckappa-ialm-CL}\\
&\textstyle T_0^{(k)}=\sqrt{1+\frac{\rho_k+\beta_k\|\vA\|^2}{L_f}}\log\frac{\frac{L_{\Psi_k}}{L_f} \dist\big(\vzero, \partial \Psi_k(\vx^{(k)})\big)}{\vareps_0}, \label{eq:def-T0-Ckappa-ialm-T}\\
&\textstyle C_\kappa^{(k)} = \frac{\sqrt{\kappa^{(k)} }L_{\Psi_k}(2-c)}{\vareps_0\gamma_{\mathrm{dec}}\sqrt c (1-c)}  \sqrt{\frac{2(\psi_0^{(k)} + S^{(k)})}{\mu+\rho_k}}, \label{eq:def-T0-Ckappa-ialm-C}
\end{align}
where $c\in(0,1)$ is the same universal constant as in \eqref{eq:choice-eps-k}. 
Because $\underline\rho \leqslant \rho_k$ and $\overline\beta \geqslant \beta_k$ for all $0\leqslant k < K$, the quantities defined below are respectively upper bounds of $\kappa^{(k)}$, $S^{(k)}$, $L_{\Psi_k}$, $C_L^{(k)}$, and $C_\kappa^{(k)}$:
\begin{align}
&\textstyle \bar\kappa=\frac{L_f+\underline{\rho}}{\gamma_{\mathrm{dec}}(\mu+\underline{\rho})}, \quad \overline S=\frac{\sqrt{\bar\kappa}}{2(1-c)^2\underline{L}}\sum_{t=0}^{\infty}\frac{\vareps_0^2}{(t+1)^{2}} < \infty, \label{eq:def-kappa-D-bar}\\
&\textstyle 
\overline L_\Psi = L_f+\rho_0+\overline\beta\|\vA\|^2,\ \overline C_L = \frac{\overline L_\Psi}{\sqrt{\underline{L}}} + \sqrt{\frac{\overline L_\Psi}{\gamma_{\mathrm{dec}}}}, \ \overline C_\kappa = \frac{\sqrt{\overline \kappa}(2-c)\overline L_\Psi}{\vareps_0\gamma_{\mathrm{dec}}\sqrt c (1-c)}  \sqrt{\frac{2(\overline\psi_0 + \overline S)}{\mu+\underline \rho} }.  \label{eq:def-C-L-bar}
\end{align}


By the above notations, we can show the following two lemmas.
\begin{lemma}\label{lem:bd-rho-beta-x-lam}
Suppose Setting~\ref{set:paras} is adopted. It holds that $\rho_k \geqslant \underline{\rho}  $ and $  \beta_k \leqslant \overline\beta$ for all $0\leqslant k <K$. In addition, $\|\vx^{(k)} - \vx^*\|\leqslant B_\vx$ and $\|\vlam^{(k)}-\vlam^*\|\leqslant B_\vlam$ hold for all $0\leqslant k \leqslant K$.  Moreover, $\|\vx^{(k+1)}_*-\vx^*\|\leqslant B_\vx$ for all $0\leqslant k < K$.
\end{lemma}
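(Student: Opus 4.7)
The plan is to handle the three claims separately, two by monotonicity and one by a short energy argument. First, Part~1 is immediate from the parameter schedule: since $\rho_k=\rho_0\sigma^{-k}$ is decreasing and $\beta_k=\beta_0\sigma^k$ is increasing in $k$ (because $\sigma>1$), the extremal values on the range $0\leqslant k\leqslant K-1$ are attained at $k=K-1$, giving $\rho_k\geqslant\rho_{K-1}=\underline{\rho}$ and $\beta_k\leqslant\beta_{K-1}=\overline\beta$.

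For Part~2, I would simply invoke \eqref{eq:bd-x-lam-spec} of Lemma~\ref{lem:bound-pd-iter}. Under Setting~\ref{set:paras} we have $\bar\vareps_i\leqslant\bar\vareps$ for every $i$, so both right-hand sides in \eqref{eq:bd-x-lam-spec} are nondecreasing in $k$; evaluating them at $k=K$ reproduces exactly the constants $B_\vx$ and $B_\vlam$ defined in \eqref{eq:def-bar-rho-beta}, so the desired bounds hold uniformly for $0\leqslant k\leqslant K$.

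Part~3 is the substantive step. The key observation is that $\vx^{(k+1)}_*=\argmin_\vx\Psi_k(\vx)$ is an exact minimizer, hence it satisfies the $\bar\vareps_k=0$ analogue of \eqref{eq:find-x-k-cond}, namely $\vzero\in\partial\Psi_k(\vx^{(k+1)}_*)$. Introducing the ``ideal'' multiplier update
\[
(\vlam^{(k+1)}_*)_E=\vlam^{(k)}_E+\beta_k(\vA_E\vx^{(k+1)}_*-\vb_E),\quad (\vlam^{(k+1)}_*)_I=[\vlam^{(k)}_I+\beta_k(\vA_I\vx^{(k+1)}_*-\vb_I)]_+,
\]
I can replay the derivation of \eqref{eq:ineq-update-x-2} with the perturbation $\vv^{(k)}$ set to $\vzero$ (the Cauchy--Schwarz step that produced the $\bar\vareps_k\|\vx^{(k+1)}-\vx^*\|$ term then vanishes). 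Multiplying through by $2\beta_k$, using $\beta_k\rho_k=\beta_0\rho_0$, and discarding the nonnegative quantities $\|\vlam^{(k+1)}_*-\vlam^{(k)}\|^2$, $\beta_0\rho_0\|\vx^{(k+1)}_*-\vx^{(k)}\|^2$, and $2\mu\beta_k\|\vx^{(k+1)}_*-\vx^*\|^2$ yields the unconditional one-step contraction
\[
\beta_0\rho_0\|\vx^{(k+1)}_*-\vx^*\|^2+\|\vlam^{(k+1)}_*-\vlam^*\|^2\leqslant \beta_0\rho_0\|\vx^{(k)}-\vx^*\|^2+\|\vlam^{(k)}-\vlam^*\|^2.
\]
I would then bound the right-hand side via \eqref{eq:bd-x-lam} applied at index $k-1$ (with the empty-sum convention handling the case $k=0$, where the right-hand side collapses to $D_0$), use $\bar\vareps_i\leqslant\bar\vareps$, and divide by $\sqrt{\beta_0\rho_0}$ to obtain
\[
\|\vx^{(k+1)}_*-\vx^*\|\leqslant \frac{2\bar\vareps(\sigma^k-1)}{\rho_0(\sigma-1)}+\frac{D_0}{\sqrt{\beta_0\rho_0}},
\]
which is at most $B_\vx$ because $k\leqslant K-1<K$ and the right-hand side above is monotone in $k$.

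The only obstacle, and it is largely notational, is verifying that the proof of \eqref{eq:ineq-update-x-2} truly does carry over to the exact minimizer once the ideal multiplier $\vlam^{(k+1)}_*$ is defined as above; inspecting the original derivation shows that it uses $\|\vv^{(k)}\|\leqslant\bar\vareps_k$ only through Cauchy--Schwarz, so setting $\vv^{(k)}=\vzero$ removes the error term and leaves every other step intact. No new estimate beyond Lemma~\ref{lem:bound-pd-iter} is required.
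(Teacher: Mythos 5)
Your proposal is correct and follows essentially the same route as the paper: parts 1 and 2 are the same monotonicity/\eqref{eq:bd-x-lam-spec} argument, and for part 3 the paper simply remarks that the first inequality in \eqref{eq:bd-x-lam-spec} ``also applies to $\vx^{(k+1)}_*$'' because the exact minimizer satisfies \eqref{eq:find-x-k-cond} with zero error --- precisely the observation you spell out in detail by setting $\vv^{(k)}=\vzero$ and replaying \eqref{eq:ineq-update-x-2}. Your version is just a more explicit (and marginally tighter, $\sigma^k$ versus the paper's $\sigma^{k+1}$) rendering of the same one-line justification.
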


\begin{proof}
It is trivial to show that $\rho_k \geqslant \underline{\rho}  $ and $  \beta_k \leqslant \overline\beta, \forall\, 0\leqslant k <K$. From \eqref{eq:bd-x-lam-spec} and the definition of $B_\vx$ and $B_\vlam$ in \eqref{eq:def-bar-rho-beta}, we have $\|\vx^{(k)} - \vx^*\|\leqslant B_\vx$ and $\|\vlam^{(k)}-\vlam^*\|\leqslant B_\vlam, \forall\, 0\leqslant k \leqslant K$. Moreover, notice that the first inequality in \eqref{eq:bd-x-lam-spec} also applies to $\vx_*^{(k+1)}$. Hence, we have
\vspace{-0.1cm}
\begin{equation}\label{eq:bd-x-star-k+1}
\textstyle \|\vx^{(k+1)}_*-\vx^*\| \leqslant \frac{2\bar\vareps(\sigma^{k+1}-1)}{\rho_0(\sigma-1)}+\frac{D_0}{\sqrt{\beta_0\rho_0}} \leqslant B_\vx, \forall \, k < K.
\vspace{-0.1cm}
\end{equation}
This completes the proof.
\end{proof}

\begin{lemma}\label{lem:bd-psi-0-k}
Let $\psi_0^{(k)}$ be defined in \eqref{eq:def-psi-0-k}. Then for any $1\leqslant k < K$,
\vspace{-0.1cm}
$$\textstyle \psi_0^{(k)} \leqslant 2B_\vx \big(1 + 2\rho_0 B_\vx + \|\vA\|(2\sigma B_\vlam + B_\vlam + \|\vlam^*\|)\big) + (1-(1-c)\alpha_0)\frac{\gamma_0B_\vx^2}{2}.
\vspace{-0.1cm}$$
\end{lemma}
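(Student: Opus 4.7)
The quantity $\psi_0^{(k)}$ is the sum of the objective gap $\Psi_k(\vx^{(k)})-\Psi_k^*$ and the quadratic $(1-(1-c)\alpha_0)\frac{\gamma_0}{2}\|\vx_*^{(k+1)}-\vx^{(k)}\|^2$, so I will bound each piece separately. For the quadratic piece, Lemma~\ref{lem:bd-rho-beta-x-lam} gives $\|\vx^{(k)}-\vx^*\|\le B_\vx$ and $\|\vx_*^{(k+1)}-\vx^*\|\le B_\vx$, so the triangle inequality yields $\|\vx_*^{(k+1)}-\vx^{(k)}\|\le 2B_\vx$, which handles this term directly.

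For the objective gap, the plan is to use convexity of $\Psi_k$: for any $\vxi\in\partial\Psi_k(\vx^{(k)})$,
$$\Psi_k(\vx^{(k)})-\Psi_k^*=\Psi_k(\vx^{(k)})-\Psi_k(\vx_*^{(k+1)})\le\langle\vxi,\vx^{(k)}-\vx_*^{(k+1)}\rangle\le 2B_\vx\|\vxi\|,$$
and taking the infimum over $\vxi$ gives $\Psi_k(\vx^{(k)})-\Psi_k^*\le 2B_\vx\cdot\dist(\vzero,\partial\Psi_k(\vx^{(k)}))$. It then suffices to bound the stationarity measure $\dist(\vzero,\partial\Psi_k(\vx^{(k)}))$ by $1+2\rho_0 B_\vx+\|\vA\|(2\sigma B_\vlam+B_\vlam+\|\vlam^*\|)$.

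To do so, observe that $\partial\Psi_k(\vx^{(k)})=\partial G(\vx^{(k)})+\vA^\top\vu_k$, where $\vu_k=[\vu_{E,k};\vu_{I,k}]$ with $\vu_{E,k}=\vlam_E^{(k)}+\beta_k(\vA_E\vx^{(k)}-\vb_E)$ and $\vu_{I,k}=[\vlam_I^{(k)}+\beta_k(\vA_I\vx^{(k)}-\vb_I)]_+$ (the proximal quadratic contributes zero gradient at $\vx^{(k)}$). Using the multiplier update rule for $k\ge 1$, $\nabla h_{k-1}(\vx^{(k)})=\vA^\top\vlam^{(k)}$, so the stopping criterion \eqref{eq:find-x-k-cond} at the $(k-1)$th outer iteration provides $\vxi\in\partial G(\vx^{(k)})$ with $\|\vxi+\vA^\top\vlam^{(k)}+\rho_{k-1}(\vx^{(k)}-\vx^{(k-1)})\|\le\bar\vareps_{k-1}$. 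I then decompose
$$\vxi+\vA^\top\vu_k=\bigl[\vxi+\vA^\top\vlam^{(k)}+\rho_{k-1}(\vx^{(k)}-\vx^{(k-1)})\bigr]-\rho_{k-1}(\vx^{(k)}-\vx^{(k-1)})+\vA^\top(\vu_k-\vlam^{(k)}),$$
apply the triangle inequality, and use $\bar\vareps_{k-1}\le 1$, $\rho_{k-1}\le\rho_0$, and $\|\vx^{(k)}-\vx^{(k-1)}\|\le 2B_\vx$ (from Lemma~\ref{lem:bd-rho-beta-x-lam}) to absorb the first two summands into $1+2\rho_0 B_\vx$.

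The hard part is bounding the residual $\|\vA^\top(\vu_k-\vlam^{(k)})\|\le\|\vA\|\|\vu_k-\vlam^{(k)}\|$ by $\|\vA\|(2\sigma B_\vlam+B_\vlam+\|\vlam^*\|)$. For the equality part this is clean: the update $\vlam_E^{(k)}=\vlam_E^{(k-1)}+\beta_{k-1}(\vA_E\vx^{(k)}-\vb_E)$ together with $\beta_k=\sigma\beta_{k-1}$ gives $\vu_{E,k}-\vlam_E^{(k)}=\sigma(\vlam_E^{(k)}-\vlam_E^{(k-1)})$, hence $\vu_{E,k}=\vlam_E^{(k)}+\sigma(\vlam_E^{(k)}-\vlam_E^{(k-1)})$. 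For the inequality part the projection $[\cdot]_+$ makes things delicate; my plan is to do a componentwise case split on whether $\vlam_I^{(k-1)}_i+\beta_{k-1}(\vA_I\vx^{(k)}-\vb_I)_i$ is positive (so $\vlam_I^{(k)}_i=\vlam_I^{(k-1)}_i+\beta_{k-1}(\vA_I\vx^{(k)}-\vb_I)_i$) or nonpositive (so $\vlam_I^{(k)}_i=0$ and then $\vu_{I,k,i}=0$ as well because $\beta_k(\vA_I\vx^{(k)}-\vb_I)_i\le-\sigma\vlam_I^{(k-1)}_i\le 0$). In each case one gets $\vu_{I,k,i}\le[(1+\sigma)\vlam_I^{(k)}_i-\sigma\vlam_I^{(k-1)}_i]_+$ on the active index set and $0$ elsewhere, so $\|\vu_{I,k}\|\le\|\vlam_I^{(k)}\|+\sigma\|\vlam_I^{(k)}-\vlam_I^{(k-1)}\|$. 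Combining the $E$ and $I$ estimates yields $\|\vu_k\|\le\|\vlam^{(k)}\|+\sigma\|\vlam^{(k)}-\vlam^{(k-1)}\|$ by the scalar inequality $\sqrt{(a_1+a_2)^2+(b_1+b_2)^2}\le\sqrt{a_1^2+b_1^2}+\sqrt{a_2^2+b_2^2}$; the triangle inequality together with the bounds $\|\vlam^{(k)}\|\le\|\vlam^*\|+B_\vlam$ and $\|\vlam^{(k)}-\vlam^{(k-1)}\|\le 2B_\vlam$ (both from Lemma~\ref{lem:bd-rho-beta-x-lam}) then yields $\|\vu_k-\vlam^{(k)}\|\le 2\sigma B_\vlam+B_\vlam+\|\vlam^*\|$, completing the bound on $\dist(\vzero,\partial\Psi_k(\vx^{(k)}))$ and hence on $\psi_0^{(k)}$.
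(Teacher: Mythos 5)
Your overall skeleton coincides with the paper's: both arguments bound $\Psi_k(\vx^{(k)})-\Psi_k^*$ by $2B_\vx\,\dist\big(\vzero,\partial\Psi_k(\vx^{(k)})\big)$ via convexity of $\Psi_k$ and the bounds $\|\vx^{(k)}-\vx^*\|\leqslant B_\vx$, $\|\vx^{(k+1)}_*-\vx^*\|\leqslant B_\vx$; both extract $\dist\big(\vzero,\partial G(\vx^{(k)})+\vA^\top\vlam^{(k)}\big)\leqslant\bar\vareps_{k-1}+\rho_{k-1}\|\vx^{(k)}-\vx^{(k-1)}\|$ from the stopping criterion of the $(k-1)$st subproblem (this is exactly \eqref{eq:dual-feas}); and both reduce the remaining work to bounding $\|\vA^\top(\vu_k-\vlam^{(k)})\|$ with $\vu_k$ as you define it. Where you differ is in that last step: the paper uses the inequality $\|[\vx+\vy]_+-\vy\|^2\leqslant\|[\vx]_+\|^2+\|\vy\|^2$ for $\vy\geqslant\vzero$ together with the primal-feasibility bound \eqref{eq:primal-feas}, whereas you do a componentwise case split on the projection in the multiplier update. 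Both routes are legitimate, and yours can in fact be pushed to a sharper constant.

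However, your final assembly does not close as written. From $\|\vu_k\|\leqslant\|\vlam^{(k)}\|+\sigma\|\vlam^{(k)}-\vlam^{(k-1)}\|$ the triangle inequality only gives $\|\vu_k-\vlam^{(k)}\|\leqslant\|\vu_k\|+\|\vlam^{(k)}\|\leqslant 2(B_\vlam+\|\vlam^*\|)+2\sigma B_\vlam$, which overshoots the target $2\sigma B_\vlam+B_\vlam+\|\vlam^*\|$ by $B_\vlam+\|\vlam^*\|$. The repair is to bound $\vu_k-\vlam^{(k)}$ directly instead of $\vu_k$: your own case analysis shows componentwise that $\big|(\vu_{I,k}-\vlam_I^{(k)})_i\big|\leqslant\sigma\big|(\vlam_I^{(k)}-\vlam_I^{(k-1)})_i\big|$ (when the bracket is truncated to zero one has $\vlam_{I,i}^{(k)}\leqslant\frac{\sigma}{1+\sigma}\vlam_{I,i}^{(k-1)}\leqslant\sigma(\vlam_{I,i}^{(k-1)}-\vlam_{I,i}^{(k)})$), while $\vu_{E,k}-\vlam_E^{(k)}=\sigma(\vlam_E^{(k)}-\vlam_E^{(k-1)})$; hence $\|\vu_k-\vlam^{(k)}\|\leqslant\sigma\|\vlam^{(k)}-\vlam^{(k-1)}\|\leqslant 2\sigma B_\vlam$, which is even sharper than the paper's $2\sigma B_\vlam+B_\vlam+\|\vlam^*\|$ and certainly suffices. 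Separately, your triangle-inequality bound $\|\vx_*^{(k+1)}-\vx^{(k)}\|\leqslant 2B_\vx$ yields $2(1-(1-c)\alpha_0)\gamma_0B_\vx^2$ for the quadratic term, four times the stated $(1-(1-c)\alpha_0)\frac{\gamma_0B_\vx^2}{2}$; the paper asserts $\|\vx_*^{(k+1)}-\vx^{(k)}\|^2\leqslant B_\vx^2$ at the same point with no stronger justification, so this constant discrepancy is inherited from the source and is immaterial to the downstream complexity results.
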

\vspace{-0.2cm}

\begin{proof}
From \eqref{eq:dual-feas}, 
the definition of $\Psi_k$,  the fact that $\vlam_I^{(k)}\geqslant \vzero$, and the fact that $\|[\vx+\vy]_+ - \vy\|^2 \le \|[\vx]_+\|^2+\|\vy\|^2, \forall\, \vy\geqslant\vzero$, it follows that
\vspace{-0.1cm}
\begin{eqnarray}
&&\dist\big(\vzero, \partial \Psi_k(\vx^{(k)})\big) \cr
&\leqslant & \bar\vareps_{k-1} + \rho_{k-1}\|\vx^{(k)}-\vx^{(k-1)}\| +\|\vA\|\sqrt{\beta_k^2\|\vA_E \vx^{(k)}-\vb_E\|^2 + \beta_k^2\|[\vA_I \vx^{(k)}-\vb_I]_+\|^2 + \|\vlam_I^{(k)}\|^2} \label{eq:dist-p-Psi-k} \\
&\leqslant & \bar\vareps_{k-1} + 2\rho_0 B_\vx + \|\vA\|(2\sigma B_\vlam + B_\vlam + \|\vlam^*\|), \label{eq:dist-p-Psi-k-2}
\vspace{-0.1cm}
\end{eqnarray}
where in the second inequality, we have used Lemma~\ref{lem:bd-rho-beta-x-lam}, \eqref{eq:primal-feas}, and the fact that $\sqrt{a+b}\leqslant \sqrt{a} + \sqrt{b}, \forall\, a, b\geqslant 0$.
The inequality in \eqref{eq:dist-p-Psi-k-2}, together with the convexity of $\Psi_k$, Lemma~\ref{lem:bd-rho-beta-x-lam}, and \eqref{eq:bd-x-star-k+1}, gives
\vspace{-0.1cm}
\begin{equation*}
\Psi_k(\vx^{(k)}) - \Psi_k^* \leqslant 2B_\vx \big(\bar\vareps_{k-1} + 2\rho_0 B_\vx + \|\vA\|(2\sigma B_\vlam + B_\vlam + \|\vlam^*\|)\big), \forall\, 1\leqslant k < K.
\vspace{-0.1cm}
\end{equation*}
Now the desired result follows from that facts that $\bar\vareps_{k-1} \leqslant 1$ and that $\|\vx_*^{(k+1)}-\vx^{(k)}\|^2 \leqslant B_\vx^2$.
\end{proof}

By Lemma~\ref{lem:bd-psi-0-k}, we can bound $\psi_0^{(k)}$ uniformly for $0\leqslant k < K$ by the quantity  
\vspace{-0.1cm}
\begin{equation}\label{eq:def-bar-phi0}
\overline\psi_0 := \max\Big\{\textstyle \psi_0^{(0)}, 2B_\vx \big(1 + 2\rho_0 B_\vx + \|\vA\|(2\sigma B_\vlam + B_\vlam + \|\vlam^*\|)\big) + (1-(1-c)\alpha_0)\frac{\gamma_0B_\vx^2}{2}\Big\}.
\vspace{-0.1cm}
\end{equation}

Now we are ready to show the overall oracle complexity of Algorithm~\ref{alg:ipalm}.

\begin{theorem}[Overall oracle complexity to produce an $\vareps$-KKT point]\label{thm:complexity-kkt}
Suppose Settings~\ref{set:how-subprob} and \ref{set:paras} are adopted. Let $K$ be given in \eqref{eq:bd-out-iter}. In order to produce an $\vareps$-KKT point of \eqref{eq:aff-prob},  Algorithm~\ref{alg:ipalm} needs to make $Q_f$ queries to $(f, \nabla f)$ and $Q_{\vA}$ queries to $(\vA(\cdot), \vA^\top(\cdot))$. For the convex case and the strongly-convex case, the quantities $Q_f$ and $Q_{\vA}$ are given as follows:
\begin{enumerate}[leftmargin=*]
\item[(i)] when $\mu=0$,
\vspace{-0.1cm}
\begin{align}
&\textstyle Q_f= O\left( \Big(K+\sqrt{\frac{L_f}{\rho_0}} \frac{\sigma^{K/2}-1}{\sqrt\sigma-1}\Big)\log\frac{\overline C_L^2(\overline\psi_0 + \overline S)}{\underline\vareps^2}\right),\label{eq:Q-g-mu=0}\\
&Q_{\vA} = O\Bigg( \textstyle \Big(K + \sqrt{\frac{\beta_0\|\vA\|}{L_f}} \frac{\sigma^{K/2}-1}{\sqrt\sigma-1} \Big) \log\frac{\frac{\overline L_\Psi}{ L_f}\overline V_\Psi}{\vareps_0} \label{eq:Q-A-mu=0}\\
&~\hspace{1.5cm}\textstyle + \Big(K+ \sqrt{\frac{L_f}{\rho_0}} \frac{\sigma^{K/2}-1}{\sqrt\sigma-1}+ \frac{\|\vA\|\sqrt{\beta_0}}{\sqrt{\rho_0}} \frac{\sigma^{K}-1}{\sigma-1}\Big) \log\frac{(\overline\psi_0 + \overline S)\overline C_L^2}{\underline\vareps^2} \left(\log \overline C_\kappa + \log\log\frac{\overline C_L^2(\overline \psi_0 + \overline S)}{\underline\vareps^2}\right)\Bigg);\nonumber
\vspace{-0.1cm}
\end{align}
\item[(ii)] when $\mu>0$,
\vspace{-0.1cm}
\begin{align}
&\textstyle Q_f= O\left(K\sqrt{\frac{L_f}{\mu}} \log\frac{\overline C_L^2(\overline\psi_0 + \overline S)}{\underline\vareps^2}\right), \label{eq:Q-g-mu>0}\\
&Q_{\vA} = O\Bigg( \textstyle \Big(K + \sqrt{\frac{\beta_0\|\vA\|}{L_f}} \frac{\sigma^{K/2}-1}{\sqrt\sigma-1} \Big) \log\frac{{\frac{\overline L_\Psi}{L_f}}\overline V_\Psi}{\vareps_0} \label{eq:Q-A-mu>0} \\
&~ \hspace{1.5cm} + \textstyle \Big(K\sqrt{\frac{L_f}{\mu}}+\frac{\|\vA\|\sqrt{\beta_0}}{\sqrt{\mu}} \frac{\sigma^{K/2}-1}{\sqrt\sigma-1}\Big) \log\frac{(\overline\psi_0 + \overline S)\overline C_L^2}{\underline\vareps^2} \left(\log \overline C_\kappa + \log\log\frac{\overline C_L^2(\overline \psi_0 + \overline S)}{\underline\vareps^2}\right)\Bigg). \nonumber
\vspace{-0.1cm}
\end{align}
\end{enumerate} 
The above big-Os hide universal constants depending only on $\gamma_{\mathrm{dec}}$.
\end{theorem}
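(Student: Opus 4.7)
The strategy is to invoke Theorem~\ref{thm:oracle-crit-scvx} on each iPALM subproblem~\eqref{eq:prob-psi}, replace the subproblem-dependent constants by their uniform upper bounds across $0\leqslant k<K$, and then sum the per-subproblem query counts. By the definitions in \eqref{eq:def-g-k}--\eqref{eq:prob-psi}, the $k$-th subproblem is an instance of~\eqref{eq:cvx-composite} with $L_g=L_f+\rho_k$, $L_h=\beta_k\|\vA\|^2$, and strong-convexity constant $\mu+\rho_k>0$, so the iAPG call in Setting~\ref{set:how-subprob} is in the strongly convex regime of Theorem~\ref{thm:oracle-crit-scvx}. Each $(g_k,\nabla g_k)$ query costs one $(f,\nabla f)$ query (plus cheap proximal terms), while each $(h_k,\nabla h_k)$ query costs one $(\vA(\cdot),\vA^\top(\cdot))$ query.

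First, I will use Lemmas~\ref{lem:bd-rho-beta-x-lam} and~\ref{lem:bd-psi-0-k} together with the definitions \eqref{eq:def-kappa-D-bar}--\eqref{eq:def-C-L-bar} and \eqref{eq:def-bar-phi0} to verify $\kappa^{(k)}\leqslant\overline\kappa$, $S^{(k)}\leqslant\overline S$, $\psi_0^{(k)}\leqslant\overline\psi_0$, $L_{\Psi_k}\leqslant\overline L_\Psi$, $C_L^{(k)}\leqslant\overline C_L$, $C_\kappa^{(k)}\leqslant\overline C_\kappa$, and $\bar\vareps_k\geqslant\underline\vareps$, so that the log factors $\log\frac{C_L^{(k)2}(\psi_0^{(k)}+S^{(k)})}{\bar\vareps_k^2}$ and $\log C_\kappa^{(k)}$ appearing in Theorem~\ref{thm:oracle-crit-scvx} are all uniformly bounded by $\log\frac{\overline C_L^2(\overline\psi_0+\overline S)}{\underline\vareps^2}$ and $\log\overline C_\kappa$, respectively. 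The quantity $\overline V_\Psi$ in \eqref{eq:Q-A-mu=0}, \eqref{eq:Q-A-mu>0} will come from the uniform upper bound $\bar\vareps_{k-1}+2\rho_0 B_\vx+\|\vA\|(2\sigma B_\vlam+B_\vlam+\|\vlam^*\|)$ on $\dist(\vzero,\partial\Psi_k(\vx^{(k)}))$ established in~\eqref{eq:dist-p-Psi-k-2}, which controls $T_0^{(k)}$ in~\eqref{eq:def-T0-Ckappa-ialm-T} uniformly.

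Next, I will sum the per-subproblem complexities over $k=0,\dots,K-1$ and handle the two $\mu$ cases through the key sums
\begin{equation*}
\textstyle\sum_{k=0}^{K-1}\sqrt{\kappa^{(k)}},\quad \sum_{k=0}^{K-1}\sqrt{\tfrac{L_{\Psi_k}}{\mu+\rho_k}},\quad \sum_{k=0}^{K-1}\sqrt{1+\tfrac{\rho_k+\beta_k\|\vA\|^2}{L_f}}.
\end{equation*}
With $\rho_k=\rho_0\sigma^{-k}$ and $\beta_k=\beta_0\sigma^{k}$, $\sqrt{a+b}\leqslant\sqrt a+\sqrt b$ turns each summand into a sum of three elementary geometric terms; the first collapses to $O(K)$, the second yields $\sqrt{L_f/\rho_0}(\sigma^{K/2}-1)/(\sqrt\sigma-1)$ when $\mu=0$ (using $\sqrt{\kappa^{(k)}}=O(\sqrt{L_f/\rho_k}+1)$) or the bound $\sqrt{\kappa^{(k)}}=O(\sqrt{L_f/\mu})$ when $\mu>0$ (since $(L_f+\rho_k)/(\mu+\rho_k)\leqslant L_f/\mu$ when $L_f\geqslant\mu$, else trivially), and the $\beta_k$-contribution yields the $\sqrt{\beta_0\|\vA\|^2/\cdot}\cdot(\sigma^{K/2}-1)/(\sqrt\sigma-1)$ or $\cdot(\sigma^{K}-1)/(\sigma-1)$ terms. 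Combining these sums with the uniform log factor gives \eqref{eq:Q-g-mu=0}--\eqref{eq:Q-A-mu>0} after absorbing constants.

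The main technical step is the careful book-keeping of the ratio $(L_f+\rho_k)/(\mu+\rho_k)$ across the two regimes, since in the $\mu=0$ case $\kappa^{(k)}$ grows like $\sigma^{k}$ and produces the dominant geometric sum, whereas for $\mu>0$ it is uniformly bounded and only the $\beta_k\|\vA\|^2/\mu$ ratio drives the $\vA$-complexity growth. A minor but non-trivial point is verifying that for $k=0$ the special bound in~\eqref{eq:low-bd-Phi-k} (through $T_0^{(k)}$ in~\eqref{eq:def-T0-Ckappa-ialm-T}) is already absorbed into the uniform bound in Lemma~\ref{lem:bd-psi-0-k} combined with~\eqref{eq:def-bar-phi0}. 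Once these pieces are in place, the bounds~\eqref{eq:Q-g-mu=0}--\eqref{eq:Q-A-mu>0} follow by direct summation and simplification.
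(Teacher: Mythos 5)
Your proposal is correct and follows essentially the same route as the paper's proof: invoke Theorem~\ref{thm:oracle-crit-scvx} on each subproblem \eqref{eq:prob-psi} with strong-convexity parameter $\mu+\rho_k>0$, replace $\kappa^{(k)}$, $S^{(k)}$, $\psi_0^{(k)}$, $C_L^{(k)}$, $C_\kappa^{(k)}$, and $\dist(\vzero,\partial\Psi_k(\vx^{(k)}))$ by their uniform bounds $\overline\kappa$, $\overline S$, $\overline\psi_0$, $\overline C_L$, $\overline C_\kappa$, $\overline V_\Psi$, and sum the resulting per-iteration counts via the geometric sums $\sum_k\sqrt{1+L_f\sigma^k/\rho_0}$ and $\sum_k\sqrt{\beta_k/(\mu+\rho_k)}\,\|\vA\|$, splitting the two regimes exactly as the paper does. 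The only detail worth stating explicitly in a full write-up is the elementary inequality $\sqrt{a+b}\leqslant\sqrt a+\sqrt b$ used to separate the $L_f$, $\rho_k$, and $\beta_k\|\vA\|^2$ contributions before summing, which you already mention.
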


\begin{proof}
By Theorem~\ref{thm:outer-iter}, we only need to bound the overall number of queries that are made to produce $\vx^{(K)}$.  
From Theorem~\ref{thm:oracle-crit-scvx}, we can find an $\bar\vareps_k$-stationary point of $\Psi_k$ in \eqref{eq:prob-psi} by   Algorithm~\ref{alg:iAPG} with  $Q_f^{(k)}$ queries to $(f, \nabla f)$ and $Q_{\vA}^{(k)}$ queries to $(\vA(\cdot), \vA^\top(\cdot))$, where
\vspace{-0.1cm}
\begin{align}
&\textstyle Q_f^{(k)} = O\left( \left\lceil \log_{\gamma_{\mathrm{dec}}}\frac{\underline{L}}{L_f+\rho_k}\right\rceil \cdot \left\lceil \frac{\log\frac{(C_L^{(k)})^2(\psi_0^{(k)} + S^{(k)})}{\bar\vareps^2_k}} {\log 1/(1-c/\sqrt{\kappa^{(k)}})}\right\rceil\right) = O\left(\sqrt{\kappa^{(k)}} \log\frac{(C_L^{(k)})^2(\psi_0^{(k)} + S^{(k)})}{\underline\vareps^2}\right), \label{eq:Q-g-k}\\
&\textstyle Q_{\vA}^{(k)}=O\left(T_0^{(k)}+\sqrt{\frac{L_f+\rho_k+\beta_k\|\vA\|^2}{\mu+\rho_k}}\log\frac{(C_L^{(k)})^2(\psi_0^{(k)} + S^{(k)})}{\underline\vareps^2} \left(\log C_\kappa^{(k)} + \log\log\frac{(C_L^{(k)})^2(\psi_0^{(k)} + S^{(k)})}{\underline\vareps^2}\right) \right). \label{eq:Q-A-k}
\vspace{-0.1cm}
\end{align}
In the two inequalities above, we have used $\bar\vareps_k \geqslant \underline\vareps$.

When $\mu=0$, we have from \eqref{eq:Q-g-k}, $\psi_0^{(k)} + S^{(k)} \leqslant \overline\psi_0 + \overline S$, and $C_L^{(k)} \leqslant \overline C_L$ that
\vspace{-0.1cm}
\begin{align*}
\textstyle Q_f = &~ \textstyle  \sum_{k=0}^{K-1} Q_f^{(k)} = \sum_{k=0}^{K-1} O\left(\sqrt{\kappa^{(k)}} \log\frac{\overline C_L^2(\overline\psi_0 + \overline S)}{\underline\vareps^2}\right)\overset{\eqref{eq:def-kappa-D-k}} = \sum_{k=0}^{K-1} O\left(\sqrt{1+\frac{L_f\sigma^k}{\rho_0}} \log\frac{\overline C_L^2(\overline\psi_0 + \overline S)}{\underline\vareps^2}\right),
\vspace{-0.1cm}
\end{align*}
which gives \eqref{eq:Q-g-mu=0} by $\sum_{k=0}^{K-1}\sqrt{1+\frac{L_f\sigma^k}{\rho_0}} \leqslant K+\sqrt{\frac{L_f}{\rho_0}} \frac{\sigma^{K/2}-1}{\sqrt\sigma-1}$.
 Also, it follows from \eqref{eq:dist-p-Psi-k-2} 
 that 
 \vspace{-0.1cm}
$$\dist\big(\vzero, \partial \Psi_k(\vx^{(k)})\big)\leqslant \max\left\{\dist\big(\vzero, \partial \cL_{\beta_0}(\vx^{(0)}, \vlam^{(0)})\big),\ \bar\vareps+2\rho_0 B_\vx + \|\vA\|(B_\vlam(2\sigma +1) + \|\vlam^*\|)\right\} =: \overline V_\Psi, \forall\, k\geqslant 0.
\vspace{-0.1cm}$$
 Thus, we have from \eqref{eq:Q-A-k} and by $C_L^{(k)} \leqslant \overline C_L$, $\kappa^{(k)}\leqslant \bar\kappa $, and $L_f + \rho_k+\beta_k\|\vA\|^2 \leqslant \overline L_\Psi$ that
 \vspace{-0.1cm}
\begin{align*}
&~Q_{\vA} =  \textstyle \sum_{k=0}^{K-1} Q_{\vA}^{(k)} \\
= & ~ \sum_{k=0}^{K-1} O\Big( \textstyle \sqrt{1+\frac{\rho_k+\beta_k\|\vA\|^2}{L_f}}\log\frac{{\frac{\overline L_\Psi}{L_f}}\overline V_\Psi}{\vareps_0} 
+\sqrt{\frac{L_f+\rho_k+\beta_k\|\vA\|^2}{\rho_k}}\log\frac{\overline C_L^2(\overline \psi_0 + \overline S)}{\underline\vareps^2} \big(\log \overline C_\kappa + \log\log\frac{\overline C_L^2(\overline \psi_0 + \overline S)}{\underline\vareps^2}\big)\Big)\\
= &~ O\Bigg( \textstyle \Big(K + \sqrt{\frac{\beta_0\|\vA\|}{L_f}} \frac{\sigma^{K/2}-1}{\sqrt\sigma-1} \Big) \log\frac{{\frac{\overline L_\Psi}{ L_f}}\overline V_\Psi}{\vareps_0} \\
&~\hspace{0.5cm}\textstyle + \Big(K+ \sqrt{\frac{L_f}{\rho_0}} \frac{\sigma^{K/2}-1}{\sqrt\sigma-1}+ \frac{\|\vA\|\sqrt{\beta_0}}{\sqrt{\rho_0}} \frac{\sigma^{K}-1}{\sigma-1}\Big) \log\frac{(\overline\psi_0 + \overline S)\overline C_L^2}{\underline\vareps^2} \left(\log \overline C_\kappa + \log\log\frac{\overline C_L^2(\overline \psi_0 + \overline S)}{\underline\vareps^2}\right)\Bigg).
\vspace{-0.1cm}
\end{align*}
This proves the case of $\mu=0$.

When $\mu>0$, we have
\vspace{-0.1cm}
\begin{align*}
\textstyle Q_f = &~ \textstyle \sum_{k=0}^{K-1} Q_f^{(k)} = \sum_{k=0}^{K-1} \textstyle O\left(\sqrt{\kappa^{(k)}} \log\frac{\overline C_L^2(\overline\psi_0 + \overline S)}{\underline\vareps^2}\right) = O\left(K\sqrt{\bar\kappa} \log\frac{\overline C_L^2(\overline\psi_0 + \overline S)}{\underline\vareps^2}\right) = O\left(K\sqrt{\frac{L_f}{\mu}} \log\frac{\overline C_L^2(\overline\psi_0 + \overline S)}{\underline\vareps^2}\right),
\vspace{-0.1cm}
\end{align*}
which gives \eqref{eq:Q-g-mu>0}. Also, we have 
\vspace{-0.1cm}
\begin{align*}
 &~ Q_{\vA} = \textstyle \sum_{k=0}^{K-1} Q_{\vA}^{(k)} \\
  = &~ \sum_{k=0}^{K-1} O\Big( \textstyle \sqrt{1+\frac{\rho_k+\beta_k\|\vA\|^2}{L_f}}\log\frac{{\frac{\overline L_\Psi}{L_f}}\overline V_\Psi}{\vareps_0} 
+\sqrt{\frac{L_f+\rho_k+\beta_k\|\vA\|^2}{\mu+\rho_k}}\log\frac{\overline C_L^2(\overline \psi_0 + \overline S)}{\underline\vareps^2} \big(\log \overline C_\kappa + \log\log\frac{\overline C_L^2(\overline \psi_0 + \overline S)}{\underline\vareps^2}\big)\Big)\\
= &~ O\Bigg( \textstyle \Big(K + \sqrt{\frac{\beta_0\|\vA\|}{L_f}} \frac{\sigma^{K/2}-1}{\sqrt\sigma-1} \Big) \log\frac{{\frac{\overline L_\Psi}{L_f}}\overline V_\Psi}{\vareps_0}  \\
&~ \hspace{0.8cm} + \textstyle \Big(K\sqrt{\frac{L_f}{\mu}}+\frac{\|\vA\|\sqrt{\beta_0}}{\sqrt{\mu}} \frac{\sigma^{K/2}-1}{\sqrt\sigma-1}\Big) \log\frac{(\overline\psi_0 + \overline S)\overline C_L^2}{\underline\vareps^2} \left(\log \overline C_\kappa + \log\log\frac{\overline C_L^2(\overline \psi_0 + \overline S)}{\underline\vareps^2}\right)\Bigg).
\vspace{-0.1cm}
\end{align*}
This proves the case of $\mu>0$ and completes the proof.
\end{proof}

\begin{remark}\label{rm:affine-cstr}
Notice that $K=O(\log\frac{1}{\vareps})$ by \eqref{eq:bd-out-iter} and that $\sigma^K=O(\frac{1}{\vareps})$ and $\underline\vareps = \Theta(\vareps)$. Hence, from Theorem~\ref{thm:complexity-kkt}, we have that $Q_f=O\left(\sqrt{\frac{L_f}{\vareps}} \log\frac{1}{\vareps}\right)$ and $Q_{\vA}=O\left( \big(\sqrt{\frac{L_f}{\vareps}} + \frac{\|\vA\|}{\vareps}\big) \big(\log\frac{1}{\vareps}\big)^2\right)$ for the case of $\mu=0$, and $Q_f=O\left(\sqrt{\frac{L_f}{\mu}} \big(\log\frac{1}{\vareps}\big)^2\right)$ and $Q_{\vA}=O\left(\big(\log\frac{1}{\vareps}\sqrt{\frac{L_f}{\mu}} + \frac{\|\vA\|}{\sqrt{\mu\vareps}}\big)\log\frac{1}{\vareps}\right)$ for the case of $\mu>0$. If $\rho_0=O(\vareps)$ and $\beta_0=O(\frac{1}{\vareps})$, then $K=O(1)$. For this setting, the factor $\big(\log\frac{1}{\vareps}\big)^2$ will reduce to $\log\frac{1}{\vareps}$ for $Q_{\vA}$ in the case of $\mu=0$ and for $Q_f$ in the case of $\mu>0$.
\end{remark}

\section{Smoothed bilinear saddle-point structured optimization}
In this section, we consider the bilinear saddle-point structured optimization problem
\vspace{-0.2cm}
\begin{equation}\label{eq:sp-prob}
p^*=\min_{\vx\in\RR^n} \left\{ p(\vx) := f(\vx) + r(\vx) + \max_{\vy\in \RR^m} \big\{\langle \vy, \vA\vx\rangle - \phi(\vy) \big\} \right\},
\vspace{-0.2cm}
\end{equation}
where $\vA\in\RR^{m\times n}$, $f$ is a $L_f$-smooth and convex function, and $r$ and $\phi$ are closed convex functions that admit easy proximal mappings. We assume that $(f, \nabla f)$ is significantly more expensive than $(\vA(\cdot), \vA^\top(\cdot))$ to evaluate. We adopt the following notation in this section
\begin{subequations}\label{eq:def-Hx}
\vspace{-0.1cm}
\begin{align}
&G(\vx) := f(\vx) + r(\vx), \quad \bar h(\vx):=\max_{\vy\in \RR^m} \big\{\langle \vy, \vA\vx\rangle - \phi(\vy) \big\}, \\
&\varphi(\vy):=\min_{\vx\in\RR^n} \big\{ G(\vx) + \langle \vy, \vA\vx\rangle\big\}, \quad d(\vy) :=  \varphi(\vy) - \phi(\vy).
\vspace{-0.1cm}
\end{align}
\end{subequations}
We call $p(\vx)-d(\vy)$ the \emph{duality gap} at $(\vx,\vy)$ which is always non-negative by the definition of $p$ and $d$. A pair of points $(\vx^*, \vy^*)$ that satisfies $p(\vx^*) = d(\vy^*)$, or equivalently, 
$
\vzero\in \partial G(\vx^*) + \vA^\top\vy^*, \vzero\in \vA\vx^* - \partial \phi(\vy^*).
$
is called a \emph{saddle point} of \eqref{eq:sp-prob}. We make the following assumption on \eqref{eq:sp-prob}.
\begin{assumption}\label{assump:sp-prob}
	Function  $f$ is $L_f$-smooth and $\mu$-strongly convex with $\mu>0$; $\dom(\phi)$ is bounded, i.e., $D_\phi := \max_{\vy_1, \vy_2\in \dom(\phi)}\|\vy_1 -\vy_2\| < \infty$; \eqref{eq:sp-prob} has a saddle point $(\vx^*, \vy^*)$.
\end{assumption}

Our goal in this section is to find a point $(\bar\vx, \bar\vy)$ that satisfies the condition above with $\vareps$-precision. We call it an $\vareps$-saddle point of \eqref{eq:sp-prob} defined formally below.
\begin{definition}
For any $\vareps\geqslant 0$, a point $(\bar\vx, \bar\vy)$ is called an $\vareps$-saddle point of \eqref{eq:sp-prob} if
\begin{equation}\label{eq:eps-sp-cond}
\dist\big(\vzero, \partial G(\bar\vx) + \vA^\top\bar\vy\big)\leqslant \vareps, \quad \dist\big(\vzero,  \vA\bar\vx - \partial \phi(\bar\vy)\big)\leqslant \vareps.
\end{equation}
\end{definition}
We consider finding  an $\vareps$-saddle point of \eqref{eq:sp-prob} by applying the iAPG method to an smooth approximation of \eqref{eq:sp-prob} using Nesterov's smoothing technique. 

The following result shows the duality gap of an $\vareps$-saddle point of \eqref{eq:sp-prob}.
\begin{theorem}\label{thm:sp-pd}
Suppose Assumption~\ref{assump:sp-prob} holds. If $(\bar\vx, \bar\vy)$ is an $\vareps$-saddle point of \eqref{eq:sp-prob}, then $p(\bar\vx) - d(\bar\vy) \leqslant 2\vareps D_\phi  + \frac{3\vareps^2}{2\mu}$.
\end{theorem}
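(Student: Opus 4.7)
The plan is to bound the duality gap by decomposing it around the Lagrangian value at $(\bar\vx,\bar\vy)$ and then handling each half using first-order inequalities, one for $\phi$ (only convex) and one for $G$ ($\mu$-strongly convex).

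\textbf{Step 1: Extract witnessing subgradients.} From the two conditions in~\eqref{eq:eps-sp-cond}, I would pick $\vxi\in\partial G(\bar\vx)$ and $\bm\zeta\in\partial\phi(\bar\vy)$ achieving the infima, so that $\|\vxi+\vA^\top\bar\vy\|\leqslant\vareps$ and $\|\vA\bar\vx-\bm\zeta\|\leqslant\vareps$. In particular, $\bar\vy\in\dom(\phi)$ because $\partial\phi(\bar\vy)$ is nonempty, and $G=f+r$ inherits $\mu$-strong convexity from $f$ since $r$ is convex.

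\textbf{Step 2: Decompose the gap via the Lagrangian.} Introduce $L(\vx,\vy):=G(\vx)+\langle\vy,\vA\vx\rangle-\phi(\vy)$ so that $p(\bar\vx)=\max_{\vy}L(\bar\vx,\vy)$ and $d(\bar\vy)=\min_{\vx}L(\vx,\bar\vy)$, and write
$$p(\bar\vx)-d(\bar\vy)=\bigl[p(\bar\vx)-L(\bar\vx,\bar\vy)\bigr]+\bigl[L(\bar\vx,\bar\vy)-d(\bar\vy)\bigr].$$
The point of this split is that the first bracket only involves the $\vy$-side of the $\vareps$-saddle condition and the second only the $\vx$-side, which cleanly decouples them.

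\textbf{Step 3: Bound the $\vy$-term via convexity of $\phi$ and boundedness of $\dom(\phi)$.} For any $\vy\in\dom(\phi)$, convexity gives $\phi(\bar\vy)-\phi(\vy)\leqslant\langle\bm\zeta,\bar\vy-\vy\rangle$, so
$$p(\bar\vx)-L(\bar\vx,\bar\vy)=\max_{\vy\in\dom(\phi)}\bigl\{\langle\vy-\bar\vy,\vA\bar\vx\rangle-\phi(\vy)+\phi(\bar\vy)\bigr\}\leqslant\max_{\vy\in\dom(\phi)}\langle\vA\bar\vx-\bm\zeta,\vy-\bar\vy\rangle\leqslant\vareps\,D_\phi,$$
using $\|\vy-\bar\vy\|\leqslant D_\phi$ since both $\vy,\bar\vy\in\dom(\phi)$.

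\textbf{Step 4: Bound the $\vx$-term via $\mu$-strong convexity.} For any $\vx$, $\mu$-strong convexity gives $G(\bar\vx)-G(\vx)\leqslant\langle\vxi,\bar\vx-\vx\rangle-\frac{\mu}{2}\|\vx-\bar\vx\|^2$, so
$$L(\bar\vx,\bar\vy)-d(\bar\vy)=\max_\vx\bigl\{G(\bar\vx)-G(\vx)+\langle\vA^\top\bar\vy,\bar\vx-\vx\rangle\bigr\}\leqslant\max_\vx\Bigl\{\langle\vxi+\vA^\top\bar\vy,\bar\vx-\vx\rangle-\tfrac{\mu}{2}\|\vx-\bar\vx\|^2\Bigr\}.$$
Cauchy–Schwarz bounds the linear term by $\vareps\|\bar\vx-\vx\|$, and completing the square (or maximizing over $t=\|\bar\vx-\vx\|\geqslant 0$) yields the bound $\vareps^2/(2\mu)$.

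\textbf{Step 5: Combine.} Adding the two estimates gives the sharper inequality $p(\bar\vx)-d(\bar\vy)\leqslant\vareps D_\phi+\vareps^2/(2\mu)$, which is stronger than (hence implies) the claimed $2\vareps D_\phi+3\vareps^2/(2\mu)$. If instead one wishes to match the stated constants, one may apply Young's inequality $\vareps\|\bar\vx-\vx\|\leqslant\frac{\mu}{3}\|\bar\vx-\vx\|^2+\frac{3\vareps^2}{4\mu}$ in Step~4, which slackens the bound but still closes the proof.

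The only real subtlety is Step~4, where the maximizer of the quadratic may lie outside $\dom(G)$; the upper bound nevertheless remains valid because we are maximizing over a superset of the feasible region. Everything else is routine given the decoupling in Step~2.
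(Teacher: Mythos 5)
Your proof is correct, and it takes a genuinely different route from the paper's. The paper splits the gap at the optimal value $p^*$ using the exact saddle point $(\vx^*,\vy^*)$ guaranteed by Assumption~\ref{assump:sp-prob}: it bounds $p(\bar\vx)-p^*$ by $\vareps D_\phi+\frac{\vareps^2}{2\mu}$ and $p^*-d(\bar\vy)$ by $\vareps D_\phi+\frac{\vareps^2}{\mu}$, the latter incurring an extra $\frac{\vareps^2}{\mu}$ through the auxiliary minimizer $\widehat\vx$ of $G(\cdot)+\langle\bar\vy,\vA\,\cdot\rangle$ and the estimate $\langle\bar\vu,\bar\vx-\widehat\vx\rangle\leqslant\frac{\|\bar\vu\|^2}{\mu}$. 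You instead split at the Lagrangian value $L(\bar\vx,\bar\vy)$, which decouples the two halves of \eqref{eq:eps-sp-cond} exactly: the $\vy$-side condition controls $p(\bar\vx)-L(\bar\vx,\bar\vy)\leqslant\vareps D_\phi$ via convexity of $\phi$ and boundedness of $\dom(\phi)$, and the $\vx$-side condition controls $L(\bar\vx,\bar\vy)-d(\bar\vy)\leqslant\frac{\vareps^2}{2\mu}$ via strong convexity of $G$ and a one-dimensional quadratic maximization. This buys you two things: a strictly sharper constant, $p(\bar\vx)-d(\bar\vy)\leqslant\vareps D_\phi+\frac{\vareps^2}{2\mu}$, which of course implies the stated bound; and independence from the existence of an exact saddle point, since your argument only needs $p(\bar\vx)$ and $d(\bar\vy)$ to be finite (which follows from the boundedness of $\dom(\phi)$ and the strong convexity of $G$). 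Your closing remark about the unconstrained quadratic maximization in Step~4 is the right caveat and is handled correctly: upper-bounding a supremum over a superset is always valid.
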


\begin{proof}
Since $(\bar\vx, \bar\vy)$ is an $\vareps$-saddle point, there exist $\bar\vu\in \partial G(\bar\vx) + \vA^\top\bar\vy$ and $\bar\vv\in \vA\bar\vx - \partial \phi(\bar\vy)$ such that $\|\bar\vu\|\leqslant \vareps$ and $\|\bar\vv\|\leqslant \vareps$. By the $\mu$-strong convexity of $G$, it follows that
\vspace{-0.1cm}
\begin{align}\label{eq:bd-G-bar-x}
G(\bar\vx) \leqslant & ~ \textstyle G(\vx^*) + \langle \bar\vu - \vA^\top\bar\vy, \bar\vx - \vx^*\rangle - \frac{\mu}{2}\|\bar\vx - \vx^*\|^2 \cr
= & ~ \textstyle G(\vx^*) + \langle \bar\vu, \bar\vx - \vx^*\rangle - \langle \bar\vy, \vA\bar\vx - \vA\vx^*\rangle - \frac{\mu}{2}\|\bar\vx - \vx^*\|^2\cr
\leqslant & ~ \textstyle G(\vx^*) - \langle \bar\vy, \vA\bar\vx - \vA\vx^*\rangle + \frac{1}{2\mu}\|\bar\vu\|^2,
\vspace{-0.1cm}
\end{align}
where we have used the Young's inequality in the last inequality. In addition, by the convexity of $\phi$ and the definition of $\bar h$ in \eqref{eq:def-Hx}, we have
$ \bar h(\bar\vx) + \langle \bar\vv, \bar\vy - \widehat\vy\rangle \leqslant \langle \bar\vy, \vA\bar\vx \rangle - \phi(\bar\vy)$, 
where $\widehat\vy \in \argmax_\vy \big\{\langle \vy, \vA\bar\vx\rangle - \phi(\vy) \big\}$. Adding this inequality to \eqref{eq:bd-G-bar-x} gives
\begin{align}\label{eq:bd-f-bar-x}
p(\bar\vx) + \langle \bar\vv, \bar\vy - \widehat\vy\rangle \leqslant & ~ \textstyle G(\vx^*) + \langle \bar\vy, \vA\vx^*\rangle - \phi(\bar\vy) + \frac{1}{2\mu}\|\bar\vu\|^2\cr
= & ~ \textstyle p(\vx^*) + \langle \bar\vy - \vy^*, \vA\vx^*\rangle + \phi(\vy^*) - \phi(\bar\vy) + \frac{1}{2\mu}\|\bar\vu\|^2,
\end{align}
where the equality holds because $(\vx^*,\vy^*)$ is a saddle point of \eqref{eq:sp-prob}. Now from the convexity of $\phi$ and the fact $\vA\vx^*\in \partial \phi(\vy^*)$, it follows that $\langle \bar\vy - \vy^*, \vA\vx^*\rangle + \phi(\vy^*) - \phi(\bar\vy) \leqslant 0$. Hence, we have from \eqref{eq:bd-f-bar-x} that 
\begin{align}\label{eq:bd-f-bar-x-2}
p(\bar\vx) \leqslant \textstyle p(\vx^*) - \langle \bar\vv, \bar\vy - \widehat\vy\rangle + \frac{1}{2\mu}\|\bar\vu\|^2 \leqslant p(\vx^*) + \vareps D_\phi  + \frac{\vareps^2}{2\mu}.
\end{align}

Similarly, from the convexity of $\phi$ and $\bar\vv\in \vA\bar\vx - \partial \phi(\bar\vy)$, it follows that
\begin{align}\label{eq:bd-phi-bar-y}
-\phi(\bar\vy) \geqslant -\phi(\vy^*) + \langle \bar\vv - \vA\bar\vx, \bar\vy - \vy^*\rangle.
\end{align}
In addition, by the definition of $\varphi$ in \eqref{eq:def-Hx} and the fact $\bar\vu\in \partial G(\bar\vx) + \vA^\top\bar\vy$, we have
$\varphi(\bar\vy) + \langle \bar\vu, \bar\vx - \widehat\vx\rangle \geqslant G(\bar\vx) + \langle \bar\vy, \vA\bar\vx\rangle$, 
where $\widehat\vx=\argmin_\vx \big\{ G(\vx) + \langle \bar\vy, \vA\vx\rangle\big\}$. Adding this inequality to \eqref{eq:bd-phi-bar-y} yields
\begin{align}\label{eq:bd-d-bar-y}
d(\bar\vy) + \langle \bar\vu, \bar\vx - \widehat\vx\rangle \geqslant & ~ G(\bar\vx) + \langle \vy^*, \vA\bar\vx\rangle -\phi(\vy^*) + \langle \bar\vv, \bar\vy - \vy^*\rangle\cr
= & ~ p^* - G(\vx^*) + G(\bar\vx) +  \langle \vy^*, \vA\bar\vx - \vA\bar\vx^*\rangle  + \langle \bar\vv, \bar\vy - \vy^*\rangle.
\end{align}
Notice that $-\vA^\top \vy^* \in \partial G(\vx^*)$. We have from the convexity of $G$ that $- G(\vx^*) + G(\bar\vx) +  \langle \vy^*, \vA\bar\vx - \vA\bar\vx^*\rangle \geqslant 0$. Hence, \eqref{eq:bd-d-bar-y} implies
\vspace{-0.2cm}
\begin{align}\label{eq:bd-d-bar-y-2}
d(\bar\vy) + \langle \bar\vu, \bar\vx - \widehat\vx \rangle \geqslant p^* + \langle \bar\vv, \bar\vy - \vy^*\rangle \geqslant p^* - \vareps D_y.
\vspace{-0.2cm}
\end{align}
Moreover, from $\bar\vu\in \partial G(\bar\vx) + \vA^\top\bar\vy$ and $\vzero \in \partial G(\widehat\vx) + \vA^\top\bar\vy$ together with the $\mu$-strong convexity of $G$, it holds $\langle \bar\vu, \bar\vx - \widehat\vx \rangle \geqslant \mu \|\bar\vx - \widehat\vx\|^2$. Hence, by the Cauchy-Schwarz ineuqality, we have $\|\bar\vx - \widehat\vx\| \leqslant \frac{\|\bar\vu\|}{\mu}$ and $\langle \bar\vu, \bar\vx - \widehat\vx \rangle \leqslant \frac{\|\bar\vu\|^2}{\mu} \leqslant \frac{\vareps^2}{\mu}$, which together with \eqref{eq:bd-d-bar-y-2} gives $d(\bar\vy) \geqslant p^* - \vareps D_\phi - \frac{\vareps^2}{\mu}$. Therefore, from \eqref{eq:bd-f-bar-x-2}, we conclude that $p(\bar\vx) - d(\bar\vy) \leqslant 2\vareps D_\phi  + \frac{3\vareps^2}{2\mu}$. This completes the proof.
\end{proof}

\vspace{-0.2cm}
\begin{remark}
By Theorem~\ref{thm:sp-pd}, in order to produce a primal-dual solution of  \eqref{eq:sp-prob} that achieves a duality gap at most $\vareps>0$, it suffices to find an $\bar\vareps$-saddle point, where $\bar\vareps = \min\big\{\frac{\vareps}{4 D_\phi}, \sqrt{\frac{4\mu\vareps}{3}}\big\}$. The advantage of targeting a near-saddle point is the direct verifiability of the conditions in \eqref{eq:eps-sp-cond} while the duality gap cannot be directly computed.
\end{remark}

Notice that when $\phi$ is merely convex (as compared to strongly convex), $\bar h$ is in general non-smooth. In this case, \cite{nesterov2005smooth} introduces a smoothing technique and solves an approximation of \eqref{eq:sp-prob} as follows: 
\vspace{-0.2cm}
\begin{equation}\label{eq:sp-prob-smooth}
p^*_\rho=\min_{\vx\in\RR^n} \left\{p_\rho(\vx):= f(\vx) + r(\vx) + h_\rho(\vx)\right\},
\vspace{-0.2cm}
\end{equation}
where $\rho>0$ is the smoothing parameter, and $h_\rho$ is defined by
\vspace{-0.2cm}
\begin{equation}\label{eq:def-Hx-rho}
h_\rho(\vx) = \max_{\vy\in \RR^m} \big\{\textstyle \langle \vy, \vA\vx\rangle - \phi(\vy) - \frac{\rho}{2}\|\vy-\vy^{(0)}\|^2 \big\} 
\vspace{-0.2cm}
\end{equation}
with any $\vy^{(0)}\in \dom(\phi)$. The result below is from \cite[Thm.~1]{nesterov2005smooth}. 

\begin{lemma}\label{eq:smooth-H-rho}
$h_\rho$ defined in \eqref{eq:def-Hx-rho} is $\frac{\|\vA\|^2}{\rho}$-smooth and $\nabla h_\rho(\vx) = \vA^\top\vy(\vx)$, where for any $\vx$,
\vspace{-0.2cm}
\begin{equation}\label{eq:def-yofx-rho}
\vy(\vx) = \argmax_{\vy\in \RR^m} \big\{\textstyle \langle \vy, \vA\vx\rangle - \phi(\vy) - \frac{\rho}{2}\|\vy-\vy^{(0)}\|^2 \big\} = \prox_{\phi/\rho}\left(\vy^{(0)} + \frac{1}{\rho}\vA\vx\right). 
\vspace{-0.2cm}
\end{equation}
\end{lemma}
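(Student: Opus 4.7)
The plan is to establish two facts: (i) the closed form of the maximizer $\vy(\vx)$ and the gradient formula $\nabla h_\rho(\vx) = \vA^\top\vy(\vx)$, and (ii) the Lipschitz continuity of $\nabla h_\rho$ with the stated constant. Both parts exploit the fact that the inner maximization in \eqref{eq:def-Hx-rho} is strongly concave in $\vy$ with modulus $\rho$, so the maximizer $\vy(\vx)$ is unique for every $\vx$.

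First, I would derive the proximal-mapping form of $\vy(\vx)$. Writing the first-order optimality condition for the strongly concave maximization in \eqref{eq:def-Hx-rho} gives $\vzero\in \vA\vx - \partial\phi(\vy) - \rho(\vy-\vy^{(0)})$, which rearranges to $\vy^{(0)}+\tfrac{1}{\rho}\vA\vx - \vy \in \tfrac{1}{\rho}\partial\phi(\vy)$. This is exactly the optimality condition characterizing $\vy = \prox_{\phi/\rho}\!\big(\vy^{(0)}+\tfrac{1}{\rho}\vA\vx\big)$, establishing the second equality in \eqref{eq:def-yofx-rho}.

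Next, I would obtain $\nabla h_\rho(\vx) = \vA^\top \vy(\vx)$ by an envelope-type (Danskin) argument. Since $\vy(\vx)$ is the unique maximizer and the objective inside the max in \eqref{eq:def-Hx-rho} is jointly concave in $\vy$ and linear (hence smooth) in $\vx$, $h_\rho$ is convex and differentiable with gradient equal to the partial gradient of the inner expression with respect to $\vx$ evaluated at $\vy=\vy(\vx)$, namely $\vA^\top\vy(\vx)$. Alternatively, one can verify this directly from the definition using the two inequalities $h_\rho(\vx')\geqslant \langle \vy(\vx),\vA\vx'\rangle-\phi(\vy(\vx))-\tfrac{\rho}{2}\|\vy(\vx)-\vy^{(0)}\|^2$ and the analogous bound at $\vx'$, to squeeze the difference quotient.

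Finally, for the Lipschitz constant, I would combine the gradient formula with the nonexpansiveness of the proximal mapping. For any $\vx_1,\vx_2$, the nonexpansiveness of $\prox_{\phi/\rho}$ gives
\[
\|\vy(\vx_1)-\vy(\vx_2)\|\leqslant \Big\|\tfrac{1}{\rho}\vA\vx_1-\tfrac{1}{\rho}\vA\vx_2\Big\|\leqslant \tfrac{\|\vA\|}{\rho}\|\vx_1-\vx_2\|,
\]
so
\[
\|\nabla h_\rho(\vx_1)-\nabla h_\rho(\vx_2)\| = \|\vA^\top(\vy(\vx_1)-\vy(\vx_2))\|\leqslant \|\vA\|\cdot\tfrac{\|\vA\|}{\rho}\|\vx_1-\vx_2\| = \tfrac{\|\vA\|^2}{\rho}\|\vx_1-\vx_2\|.
\]
The only non-routine step is justifying the differentiability formula rigorously, but given the strict (in fact strong) concavity of the maximand, this is a textbook application of Danskin's theorem and should pose no real obstacle; the rest is a one-line chain of standard inequalities.
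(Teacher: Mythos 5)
Your proof is correct. Note that the paper does not prove this lemma at all: it simply imports it as Theorem~1 of \cite{nesterov2005smooth}, so there is no in-paper argument to compare against. Your write-up is the standard proof of that cited result: the completion-of-square / optimality-condition identification of $\vy(\vx)$ with $\prox_{\phi/\rho}\big(\vy^{(0)}+\frac{1}{\rho}\vA\vx\big)$ is exact, the Danskin/envelope step is legitimate here because the maximand is $\rho$-strongly concave (so the maximizer is unique) and $\dom(\phi)$ is bounded by Assumption~\ref{assump:sp-prob} (the ``squeeze'' alternative you mention, which shows $\vA^\top\vy(\vx)\in\partial h_\rho(\vx)$ and then upgrades to differentiability via continuity of $\vx\mapsto\vy(\vx)$, is equally valid and avoids invoking Danskin by name), and the final chain of inequalities is right. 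The only cosmetic difference from Nesterov's original argument is in how the $\frac{\|\vA\|}{\rho}$-Lipschitz continuity of $\vy(\cdot)$ is obtained: you use firm nonexpansiveness of the proximal mapping, whereas Nesterov derives $\rho\|\vy(\vx_1)-\vy(\vx_2)\|^2\leqslant \langle \vA(\vx_1-\vx_2),\vy(\vx_1)-\vy(\vx_2)\rangle$ directly from the two first-order optimality conditions and strong concavity; the two routes are one-line equivalents, and yours has the small advantage of reusing the explicit prox formula you already established.
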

Lemma~\ref{eq:smooth-H-rho} implies that \eqref{eq:sp-prob-smooth} is an instance of \eqref{eq:cvx-composite} with 
$g=f$ and $h=h_\rho$. This means that \eqref{eq:sp-prob-smooth} can be approximately solved by Algorithm~\ref{alg:iAPG}. More precisely,  we compute an $\vareps$-stationary point of \eqref{eq:sp-prob-smooth} by calling the iAPG method as
\vspace{-0.2cm}
\begin{eqnarray}\label{eq:calliAPG-smooth}
\bar\vx=\mathrm{iAPG}\left(f, h_\rho, r, \vx^{(0)}, \eta_0, \gamma_0, \mu,\underline{L}, \{\vareps_k\}_{k\geqslant 0}\right),
\vspace{-0.3cm}
\end{eqnarray}
where $\vareps_k$ is defined as in \eqref{eq:choice-eps-k} for $k\geqslant 0$, $\vx^{(0)}$ is any point in $\dom(r)$, and $\underline{L}=\Theta(L_f)$.

By Lemma~\ref{eq:smooth-H-rho} and Theorem~\ref{thm:oracle-crit-scvx}, we obtain the following complexity of finding an $\vareps$-saddle point of \eqref{eq:sp-prob}.
\begin{theorem}[Overall oracle complexity to produce an $\vareps$-saddle point]\label{thm:complexity-eps-sp}
Suppose Assumption~\ref{assump:sp-prob} holds. Given any $\vareps>0$, let $\rho = \frac{\vareps}{D_\phi}$ in \eqref{eq:sp-prob-smooth} and choose any $\vy^{(0)}\in \dom(\phi)$ in \eqref{eq:def-Hx-rho}. Suppose $\bar\vx$ is an $\vareps$-stationary point of \eqref{eq:sp-prob-smooth} found by applying Algorithm~\ref{alg:iAPG} to \eqref{eq:sp-prob-smooth} with the inputs given in \eqref{eq:calliAPG-smooth} and the optional steps enabled. In addition, let $\bar\vy=\vy(\bar\vx)$, where $\vy(\cdot)$ is defined in \eqref{eq:def-yofx-rho}. Then $(\bar\vx, \bar\vy)$ is an $\vareps$-saddle point of \eqref{eq:sp-prob}. To produce $(\bar\vx, \bar\vy)$, at most $K_{\mathrm{sp}}$ queries to $(f, \nabla f)$ and $T_{\mathrm{sp}}$ queries to $\big(\vA(\cdot), \vA^\top(\cdot)\big)$ are needed, where
\vspace{-0.2cm}
\begin{eqnarray}
&&\textstyle K_{\mathrm{sp}} = O\left(\left\lceil \log_{\gamma_{\mathrm{dec}}}\frac{\underline{L}}{L_f}\right\rceil \cdot \left\lceil \frac{\log\frac{C_L^2(\psi_0 + S_f)}{\vareps^2}} {\log 1/(1-c/\sqrt\kappa_f)}\right\rceil\right) = O\left(\sqrt\kappa_f\log\frac{C_L^2(\psi_0 + S_f)}{\vareps^2}\right), \label{eq:up-bd-K-sp}\\
&&\textstyle T_{\mathrm{sp}}=O\left(T_0+\sqrt{\frac{L_f+\frac{D_\phi\|\vA\|^2}{\vareps}}{\mu}}\log\frac{C_L^2(\psi_0 + S_f)}{\vareps^2} \left(\log C_{\kappa_f} + \log\log\frac{C_L^2(\psi_0 + S_f)}{\vareps^2}\right) \right). \label{eq:comp-crit-sp}
\vspace{-0.3cm}
\end{eqnarray}
Here, $c\in(0,1)$, $\kappa_f=\frac{L_f}{\gamma_\mathrm{dec}\mu}$, $S_f$ is defined the same as $S$ in \eqref{eq:D-delta} except that $\kappa$ is replaced by $\kappa_f$, 
\vspace{-0.2cm}
$$\textstyle \psi_0 = p_\rho(\vx^{(0)})-p_\rho^*+(1-(1-c)\alpha_{0})\frac{\gamma_{0}}{2}\|\vx_\rho^*-\vx^{(0)}\|^2,\quad \textstyle C_L= \frac{L_f+\frac{D_\phi \|\vA\|^2}{\vareps}}{\sqrt{\underline{L}}} + \sqrt{\frac{L_f+\frac{D_\phi \|\vA\|^2}{\vareps}}{\gamma_{\mathrm{dec}}}} \vspace{-0.2cm}$$ 
with $\vx_\rho^*=\argmin_\vx p_\rho(\vx)$, and 
\vspace{-0.2cm}
\begin{equation}\label{eq:def-T0-Ckappa-sp}
\textstyle T_0=\sqrt{1+\frac{D_\phi\|\vA\|^2}{L_f \vareps}}\log\frac{\big(1+\frac{D_\phi \|\vA\|^2}{L_f \vareps}\big) \dist\big(\vzero, \partial p_\rho(\vx^{(0)})\big)}{\vareps_0},\ \textstyle C_{\kappa_f} = \frac{\sqrt{\kappa_f }(2-c)(L_f+\frac{D_\phi \|\vA\|^2}{\vareps})}{\vareps_0\gamma_{\mathrm{dec}}\sqrt c (1-c)}  \sqrt{\frac{2(\psi_0 + S_f)}{\mu}}.
\vspace{-0.2cm}
\end{equation}
\end{theorem}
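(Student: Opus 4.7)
The proof breaks naturally into two parts: (i) verifying that the pair $(\bar\vx,\bar\vy)$ constructed from the smoothed problem satisfies the two conditions in \eqref{eq:eps-sp-cond}, and (ii) translating the iAPG complexity from Theorem~\ref{thm:oracle-crit-scvx} applied to the smoothed problem into the claimed bounds $K_{\mathrm{sp}}$ and $T_{\mathrm{sp}}$. The overall strategy is to exploit two facts: (a) $\partial p_\rho(\bar\vx) = \partial G(\bar\vx) + \vA^{\top}\vy(\bar\vx)$ by Lemma~\ref{eq:smooth-H-rho}, so $\vareps$-stationarity in the smoothed problem immediately yields the primal condition in \eqref{eq:eps-sp-cond}; and (b) the optimality condition defining $\vy(\bar\vx)$ directly relates $\vA\bar\vx$ to $\partial\phi(\bar\vy)$ up to a residual of order $\rho D_\phi$, which is exactly $\vareps$ by our choice of $\rho$.

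For part (i), the first condition is immediate: since the optional stopping rule in Algorithm~\ref{alg:iAPG} with tolerance $\vareps$ returns $\bar\vx$ with $\dist(\vzero,\partial p_\rho(\bar\vx)) \leqslant \vareps$, and since $\nabla h_\rho(\bar\vx) = \vA^{\top}\bar\vy$ by Lemma~\ref{eq:smooth-H-rho}, we immediately obtain $\dist(\vzero,\partial G(\bar\vx)+\vA^{\top}\bar\vy)\leqslant \vareps$. For the second condition, the first-order optimality of $\vy(\bar\vx)$ in the inner maximization \eqref{eq:def-yofx-rho} gives $\vzero \in \vA\bar\vx - \partial\phi(\bar\vy) - \rho(\bar\vy - \vy^{(0)})$, i.e., $\rho(\bar\vy-\vy^{(0)})\in \vA\bar\vx-\partial\phi(\bar\vy)$. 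Since $\bar\vy,\vy^{(0)}\in\dom(\phi)$, the boundedness assumption gives $\|\bar\vy-\vy^{(0)}\|\leqslant D_\phi$, so $\dist(\vzero,\vA\bar\vx-\partial\phi(\bar\vy))\leqslant \rho D_\phi = \vareps$, as required.

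For part (ii), I will instantiate Theorem~\ref{thm:oracle-crit-scvx} on \eqref{eq:sp-prob-smooth} with $g=f$, $h=h_\rho$, and $r=r$. By Assumption~\ref{assump:sp-prob}, $f$ is $L_f$-smooth and $\mu$-strongly convex, so $L_g=L_f$; and by Lemma~\ref{eq:smooth-H-rho}, $h_\rho$ is $L_h$-smooth with $L_h=\|\vA\|^2/\rho=D_\phi\|\vA\|^2/\vareps$. Plugging these in, the condition number $\kappa$ of Lemma~\ref{lem:prod-alpha} becomes exactly $\kappa_f=L_f/(\gamma_{\mathrm{dec}}\mu)$, and $C_L$ and $C_\kappa$ from Lemma~\ref{lem:bd-dist-grad} and \eqref{eq:def-T0-Ckappa} become the quantities $C_L$ and $C_{\kappa_f}$ stated in \eqref{eq:def-T0-Ckappa-sp}. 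The bound $K_{\mathrm{sp}}$ in \eqref{eq:up-bd-K-sp} is then exactly $K_{\mathrm{crit}}^{\mathrm{sc}}$ from \eqref{eq:up-bd-K-crit}, using that each query to $(g,\nabla g)$ is a query to $(f,\nabla f)$. Similarly, each query to $(h_\rho,\nabla h_\rho)$ reduces to a query to $(\vA(\cdot),\vA^{\top}(\cdot))$ together with a proximal evaluation of $\phi$ (free by assumption), so the bound $T_{\mathrm{sp}}$ in \eqref{eq:comp-crit-sp} follows by substituting $L_g=L_f$ and $L_h=D_\phi\|\vA\|^2/\vareps$ into $T_{\mathrm{crit}}^{\mathrm{sc}}$ from \eqref{eq:comp-crit-scvx}, with $T'$ becoming $T_0$ in \eqref{eq:def-T0-Ckappa-sp}.

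The main obstacle I anticipate is a bookkeeping issue rather than a conceptual one: one must verify that the inexactness tolerances $\{\vareps_k\}$ specified in \eqref{eq:choice-eps-k} and the initialization in \eqref{eq:calliAPG-smooth} indeed satisfy the hypotheses required by Theorem~\ref{thm:oracle-crit-scvx}, so the instantiation is legitimate with $\underline{L}=\Theta(L_f)$. A small subtle point is that the optional stopping criterion returns $\widetilde\vx^{(k+1)}$, not $\vx^{(k+1)}$; but since the SeekStationary routine in Algorithm~\ref{alg:seekstationary} is precisely designed so that $\dist(\vzero,\partial p_\rho(\widetilde\vx^{(k+1)}))\leqslant \vareps$ is the termination condition, setting $\bar\vx$ to this returned iterate is consistent with the argument in part (i). No extra work beyond matching constants is expected.
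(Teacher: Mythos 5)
Your proposal is correct and follows essentially the same two-step route as the paper's own proof: it verifies the saddle-point conditions via $\nabla h_\rho(\bar\vx)=\vA^\top\vy(\bar\vx)$ and the optimality condition $\vzero\in\vA\bar\vx-\partial\phi(\bar\vy)-\rho(\bar\vy-\vy^{(0)})$ with $\rho D_\phi=\vareps$, and then instantiates Theorem~\ref{thm:oracle-crit-scvx} with $L_g=L_f$ and $L_h=\|\vA\|^2/\rho=D_\phi\|\vA\|^2/\vareps$. Nothing substantive differs from the paper's argument.
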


\begin{proof}
First, suppose that $\bar\vx$ is an $\vareps$-stationary point of $p_\rho$, i.e., $\dist\big(\vzero, \partial p_\rho(\bar\vx) \big) \leqslant \vareps$. Let $\bar\vy=\vy(\bar\vx)$. Then by Lemma~\ref{eq:smooth-H-rho}, we have $\dist\big(\vzero, \nabla g(\bar\vx) + \partial r(\bar\vx) + \vA^\top\bar\vy\big) \leqslant \vareps$. In addition, notice $\vzero\in \vA\bar\vx - \partial \phi(\bar\vy) - \rho(\bar\vy - \vy^{(0)})$, and thus $\dist\big(\vzero, \vA\bar\vx - \partial \phi(\bar\vy)\big) \leqslant \rho\|\bar\vy - \vy^{(0)}\|\leqslant \rho D_\phi = \vareps$, where the equality follows from $\rho = \frac{\vareps}{D_\phi}$. Therefore, $(\bar\vx, \bar\vy)$ is an $\vareps$-saddle point of \eqref{eq:sp-prob}.

Second, by Lemma~\ref{eq:smooth-H-rho}, querying $\nabla h_\rho$ once needs one call to $ \big(\vA(\cdot), \vA^\top(\cdot), \prox_{\phi/\rho}\big)$. Now noticing that the smoothness constant of $h_\rho$ is $\frac{\|\vA\|^2}{\rho}= \frac{D_\phi\|\vA\|^2}{\vareps}$, we obtain the bounds on $K_{\mathrm{sp}}$ and $T_{\mathrm{sp}}$ directly from Theorem~\ref{thm:oracle-crit-scvx}. This completes the proof.
\end{proof}

\section{Experimental results}\label{sec:numerical}
In this section, we conduct numerical experiments to demonstrate the practical performance of the proposed algorithms. All the tests were conducted with MATLAB 2021a on a Windows machine with 10 CPU cores and 128 GB memory.

\subsection{Multitask learning}
In this subsection, we test the proposed iAPG in Alg.~\ref{alg:iAPG} on the multitask learning \cite{evgeniou2005learning} and compare it to the exact counterpart. Given $m$ binary-class datasets $\cD_l=\{(\vx_{l,i}, y_{l,i})\}_{i=1}^{N_l}, l=1,\ldots,m$ with $\vx_{l, i}\in\RR^n$ and the corresponding label $y_{l,i}\in\{+1,-1\}$ for each $l$ and $i$, we solve the multitask logistic regression \cite{gu2014multitask} and use a regularizer given in \cite[Eqn. (23)]{evgeniou2005learning} together with an $\ell_1$ term:  
\vspace{-0.2cm}
\begin{equation}\label{eq:mult-learn}
\min_\vW F(\vW):=  \underbrace{\sum_{l=1}^m \frac{1}{N_l}  \sum_{i=1}^{N_l} \log\big( 1+\exp( -y_{l,i}\vw_l^\top \vx_{l,i} ) \big) +  \frac{\mu}{2} \|\vW\|_F^2}_{g(\vW)} + \underbrace{\frac{\lambda_1}{2} \| {\textstyle \vW-\frac{1}{m}  \vW\vone\vone^\top }\|_F^2}_{h(\vW)}  + \underbrace{\lambda_2\|\vW\|_1}_{r(\vW)},
\vspace{-0.2cm}
\end{equation}
where $\|\vW\|_1:=\sum_{i,j}|w_{i,j}|$, and $\vw_l$, as the $l$-th column of $\vW$, is the classifier parameter for the $l$-th task. 

In the experiments, we fixed $\lambda_2=10^{-3}$ and chose $\mu\in\{0.01, 0.1\}$ and $\lambda_1\in \{1, 10, 100\}$. 
Notice that a larger value of $\lambda_1$ leads to a stronger correlation between the $m$ classifiers and a larger smoothness constant of $h$.
We randomly generated $m=4$ binary-class datasets as in \cite{xu2016proximal}. For each task $l=1,\ldots,m$, every positive sample follows the Gaussian distribution $\cN(\vmu_{l}, \vSigma)$ and negative sample following $\cN(-\vmu_l, \vSigma)$ with
\vspace{-0.1cm} 
$$\vSigma=\left[\begin{array}{cc}
\rho \vone_{s\times s} + (1-\rho)\vI _{s\times s} & \vzero_{s\times (n-s)} \\
\vzero_{(n-s) \times s} & \vI_{(n-s)\times (n-s)}
\end{array}\right],
\quad
\vmu_{l} = \left[\begin{array}{c}
\vone_s \\
\vzero_{n-s}
\end{array}\right] + \vd_l
\vspace{-0.1cm}
$$
where the entries of $\vd_l$ follow the uniform distribution on $[\frac{1}{2}, 1]$. We set $n=200, N_l=500,\forall\, l$ or $n=2000, N_l=5000, \forall\, l$. For each combination of $(\mu, \lambda_1, n, N_l)$, we conducted 10 independent trials. Since the smoothness constants of $g$ and $h$ can be computed explicitly, we also tested the methods without line search. We terminated the tested method once it produced an $\vareps$-stationary point $\overline\vW$, i.e., $\dist(\vzero, \partial F(\overline\vW))\leqslant \vareps$, and $\vareps=10^{-6}$ was set. For both iAPG and APG, we set $\gamma_{\mathrm{inc}}=2$ and $\gamma_{\mathrm{dec}}=\frac{1}{2}$ as in Algorithm~\ref{alg:accellinesearch} if line search is adopted. In addition, for iAPG, the initial inexactness $\vareps_0=10^{-3}$ was set. The results are shown in Table~\ref{table:mult-learn}. Here, $\#g$ represents the number\footnote{We increase $\#g$ by one if $g$ or $\nabla g$ or $(g,\nabla g)$ is called. The same rule is adopted for $\#h$.} of calls to $g$ or $\nabla g$, $\#h$ is the number of calls to $h$ or $\nabla h$, \verb|stat.viol.| denotes $\dist(\vzero, \partial F(\overline\vW))$, and the time is in seconds. From the results, we see that the proposed iAPG requires smaller $\#g$ than the exact APG in all cases. Though iAPG has larger $\#h$ than APG, the former takes shorter time and thus is more efficient. The advantage of iAPG over APG becomes more significant as the problem becomes more difficult, i.e., when $\mu$ is smaller and/or $\lambda_1$ is bigger. These verify our theoretical results.  In addition, even without knowing the smoothness constants, the iAPG by line search has a similar performance to that using the smoothness constants.

\setlength{\tabcolsep}{1pt}

\begin{table}[h]\caption{Results by the proposed iAPG method (i.e., Algorithm~\ref{alg:iAPG}) and its exact counterpart APG on solving 10 independent random instances of the regularized multitask logistic regression \eqref{eq:mult-learn} with different sizes and model parameters. The numbers in the parentheses are the standard deviations.}\label{table:mult-learn}
\vspace{-0.5cm}
\begin{center}
\resizebox{\textwidth}{!}{
\begin{tabular}{|c||cccc|cccc|ccc|ccc|}
\hline
 & \multicolumn{4}{|c|}{iAPG no line search} & \multicolumn{4}{|c|}{iAPG with line search} & \multicolumn{3}{|c|}{APG no line search} & \multicolumn{3}{|c|}{APG with line search} \\\hline\hline  
 $(\mu, \lambda_1)$ & \#$g$ & \#$h$ & stat. viol. & time & \#$g$ & \#$h$ & stat. viol. & time & \#$(g, h)$ & stat. viol. & time & \#$(g, h)$ & stat. viol. & time \\\hline
\multicolumn{15}{|c|}{Problem size: $n = 200, N_l = 500$ for each $l=1,\ldots, 4$} \\\hline
$(0.1,1)$ & 37(0.0) & 546(4.1) & 7.4e-7(7.9e-8) & 0.03 &46(4.0) & 850(66.6) & 7.0e-7(2.1e-7) & 0.04 &103(0.0) & 8.0e-7(3.0e-8) & 0.04 &158(4.2) & 7.5e-7(1.7e-7) & 0.05 \\
$(0.1,10)$ & 37(0.0) & 1815(7.4) & 7.3e-7(7.7e-8) & 0.03 &47(2.6) & 2209(104.9) & 7.0e-7(2.4e-7) & 0.04 &322(1.0) & 9.5e-7(3.1e-8) & 0.09 &604(4.4) & 8.6e-7(9.5e-8) & 0.15 \\
$(0.1,100)$ & 37(0.0) & 5946(37.0) & 7.7e-7(6.7e-8) & 0.06 &48(2.1) & 5226(298.4) & 5.3e-7(2.6e-7) & 0.06 &1038(4.1) & 9.8e-7(9.0e-9) & 0.27 &1584(6.0) & 9.7e-7(1.3e-8) & 0.38 \\\hline
$(0.01,1)$ & 106(1.1) & 1806(13.7) & 8.9e-7(7.4e-8) & 0.05 &106(0.9) & 2313(24.7) & 8.8e-7(8.7e-8) & 0.06 &288(1.0) & 9.6e-7(2.3e-8) & 0.08 &404(1.2) & 9.2e-7(6.0e-8) & 0.10 \\
$(0.01,10)$ & 106(1.0) & 6023(76.8) & 8.6e-7(6.5e-8) & 0.08 &106(0.9) & 5727(211.3) & 8.9e-7(7.6e-8) & 0.08 &874(4.2) & 9.8e-7(1.1e-8) & 0.22 &1643(10.0) & 9.6e-7(3.1e-8) & 0.39 \\
$(0.01,100)$ & 107(0.8) & 19666(189.9) & 8.6e-7(5.5e-8) & 0.16 &107(1.4) & 13381(430.9) & 8.6e-7(1.1e-7) & 0.13 &2775(13.4) & 1.0e-6(3.2e-9) & 0.71 &4248(22.9) & 9.9e-7(8.6e-9) & 1.02 \\\hline\hline
\multicolumn{15}{|c|}{Problem size: $n = 2000, N_l = 5000$ for each $l=1,\ldots, 4$} \\\hline
$(0.1,1)$ & 31(0.0) & 561(0.6) & 5.3e-7(2.1e-8) & 4.5 &38(4.9) & 869(113.3) & 3.4e-7(2.8e-7) & 4.7 &105(0.0) & 8.5e-7(1.9e-8) & 6.4 &165(1.7) & 7.5e-7(9.5e-8) & 7.9 \\
$(0.1,10)$ & 31(0.0) & 1870(5.6) & 5.5e-7(2.0e-8) & 4.5 &41(4.9) & 2149(245.9) & 6.8e-7(3.4e-7) & 4.8 &341(0.6) & 9.6e-7(1.6e-8) & 12.4 &647(0.0) & 8.2e-7(2.1e-8) & 20.0 \\
$(0.1,100)$ & 31(0.0) & 6102(17.2) & 5.6e-7(1.6e-8) & 4.9 &41(6.1) & 5103(854.0) & 4.5e-7(3.5e-7) & 5.0 &1107(2.1) & 9.8e-7(1.0e-8) & 32.0 &1728(8.2) & 9.5e-7(3.7e-8) & 47.2 \\\hline
$(0.01,1)$ & 91(0.6) & 2131(12.5) & 7.9e-7(6.4e-8) & 6.1 &88(0.0) & 2612(7.3) & 7.0e-7(1.8e-8) & 6.0 &319(0.8) & 9.7e-7(2.8e-8) & 11.8 &496(2.2) & 9.2e-7(5.7e-8) & 16.2 \\
$(0.01,10)$ & 91(0.0) & 7099(17.0) & 7.6e-7(2.1e-8) & 6.4 &88(0.0) & 7096(183.2) & 7.0e-7(2.2e-8) & 6.3 &999(0.8) & 9.9e-7(1.0e-8) & 29.2 &1903(4.6) & 9.7e-7(1.9e-8) & 51.7 \\
$(0.01,100)$ & 91(0.0) & 23013(41.8) & 7.6e-7(1.3e-8) & 7.3 &88(0.0) & 18142(281.0) & 7.0e-7(1.5e-8) & 6.9 &3183(4.0) & 9.9e-7(1.6e-9) & 85.1 &4975(14.5) & 9.9e-7(1.2e-8) & 129.1 \\\hline
\end{tabular}
}
\end{center}
\vspace{-0.2cm}
\end{table}

\subsection{Zero-sum constrained LASSO}\label{sec:lasso}
In this subsection, we test the iPALM in Algorithm~\ref{alg:ipalm}, which uses the iAPG in Algorithm~\ref{alg:iAPG} as a subroutine, on solving the zero-sum constrained LASSO \cite{gaines2018algorithms, james2019penalized}:
\vspace{-0.1cm}
\begin{equation}\label{eq:c-lasso}
\min_\vx \textstyle \frac{1}{2}\|\vA\vx - \vb\|^2 + \lambda \|\vx\|_1, \st \frac{1}{\sqrt{n}} \sum_{i=1}^n x_i = 0.
\vspace{-0.1cm}
\end{equation}
Here, $\vA\in\RR^{m\times n}$ and $\vb\in\RR^m$ are given, and we divide by $\sqrt{n}$ in the constraint to normalize the coefficient vector. We name the proposed method as iPALM\_iAPG and compare it to the accelerated primal-dual method, called APD, in \cite{hamedani2021primal}. To apply APD, we solve an equivalent min-max problem by using the ordinary Lagrangian function of \eqref{eq:c-lasso}. For iPALM\_iAPG, we set in Algorithm~\ref{alg:ipalm} $\beta_k=\beta_0\sigma^k, \rho_k = \rho_0\sigma^{-k}$ with $\beta_0=1, \rho_0=10^{-3}, \sigma = 3$, and $\vareps_0=10^{-5}$, $\gamma_{\mathrm{inc}}=3, \gamma_{\mathrm{dec}}=\frac{1}{2}$ in Algorithm~\ref{alg:accellinesearch} if line search is adopted. We set $\tau_0=1$ and $\gamma_0=10^{-3}$ for APD if line search is adopted; see Algorithm~2.3 in \cite{hamedani2021primal}.

In the tests, we set $m=2000, n= 5000$ and fixed $\lambda=10^{-3}$ in \eqref{eq:c-lasso}. Each row of $\vA$ took the form of $\frac{\va}{\|\va\|}$, where $\va$ was generated by the standard Gaussian distribution. We generated a zero-sum sparse vector $\vx^o$ with 200 nonzero components, whose locations were selected uniformly at random. Then we let $\vb = \vA\vx^o + 10^{-3}\frac{\vxi}{\|\vA\vx^o\|}$ with $\vxi$ generated from the standard Gaussian distribution. The stopping tolerance was set to $\vareps=10^{-6}$ to produce an $\vareps$-KKT point. We conducted 10 independent runs. The results are reported in Table~\ref{table:lasso}, where the methods without line search used explicitly-computed smoothness constants to set a constant stepsize. The quantity \#query\_obj denotes the number of queries to $(\vA,\vA^\top)$ and \#query\_cstr the number of times the constraint function in \eqref{eq:c-lasso} is evaluated. The quantities \verb|pres| and \verb|dres| respectively mean the violations of primal and dual feasibility in the KKT system. From the results, we see that the proposed method needs significantly shorter time than the APD method to produce comparable solutions. In addition, both methods with line search performed similarly as well as those without line search.

\setlength{\tabcolsep}{3pt}

\begin{table}[h]\caption{Results by the proposed method iPALM\_iAPG with and without line search and the accelerated primal-dual (APD) method in \cite{hamedani2021primal} with and without line search on solving 10 independent random instances of the constrained LASSO problem \eqref{eq:c-lasso} with $m=2000$ and $n=5000$. The numbers in the parentheses are the standard deviations.}\label{table:lasso}
\vspace{-0.2cm}
\begin{center}
\resizebox{0.75\textwidth}{!}{
\begin{tabular}{|c||c|c||c|c||c|}
\hline
\textbf{Method} & \textbf{\#query\_obj} & \textbf{\#query\_cstr} & \textbf{pres} & \textbf{dres} & \textbf{time}\\\hline\hline
iPALM\_iAPG no line search & 2521(286.3) & 21098(4723.5) & 3.0e-7(2.9e-7) & 6.2e-8(2.1e-10) & 18.2 \\\hline
iPALM\_iAPG with line search & 2962(347.0) & 9760(1200.6) & 3.0e-7(2.9e-7) & 5.2e-8(9.0e-9) & 17.0 \\\hline
APD no line search & \multicolumn{2}{|c||}{7929(606.7)} & 8.8e-10(1.1e-9) & 3.0e-7(2.8e-7) & 51.4 \\\hline
APD with line search & \multicolumn{2}{|c||}{4349(334.8)} & 1.8e-7(2.2e-7) & 2.9e-7(2.8e-7) & 55.3 \\\hline
\end{tabular}
}
\end{center}
\end{table}

\noindent\textbf{Effect by the initial penalty.}~~As the iPALM may benefit from a smaller initial penalty, we test iPALM\_iAPG with different values of $\beta_0$ and compare it to iPALM\_APG that uses the exact APG as the subroutine, in order to further demonstrate the advantage of the proposed method. To eliminate the possible effect by line search, we explicitly compute the smoothness constants and run both methods without line search. All the other settings are the same as those in the previous tests, except the value of $\beta_0$ varies from $\{0.1, 1, 10, 100\}$. The mean and standard deviation results of 10 independent runs are shown in Figure~\ref{fig:lasso}. From the results, we see that \#query\_obj for iPALM\_iAPG is almost not affected by $\beta_0$ while \#query\_cstr for iPALM\_iAPG and \#query\_oracle for iPALM\_APG both increase with $\beta_0$. In addition, the total running time for iPALM\_iAPG is almost not affected by $\beta_0$ either, and even when $\beta_0$ is small, iPALM\_iAPG can still be significantly more efficient than iPALM\_APG to produce a same-accurate KKT solution. 

\vspace{-0.2cm}
\begin{figure}[h]\caption{Mean and standard deviation results by the proposed iPALM\_iAPG and iPALM\_APG with different values of initial penalty parameter $\beta_0$ on solving 10 random instances of the constrained LASSO problem \eqref{eq:c-lasso} with $m=2000$ and $n=5000$.}\label{fig:lasso}
\begin{center}
\includegraphics[width=0.3\textwidth]{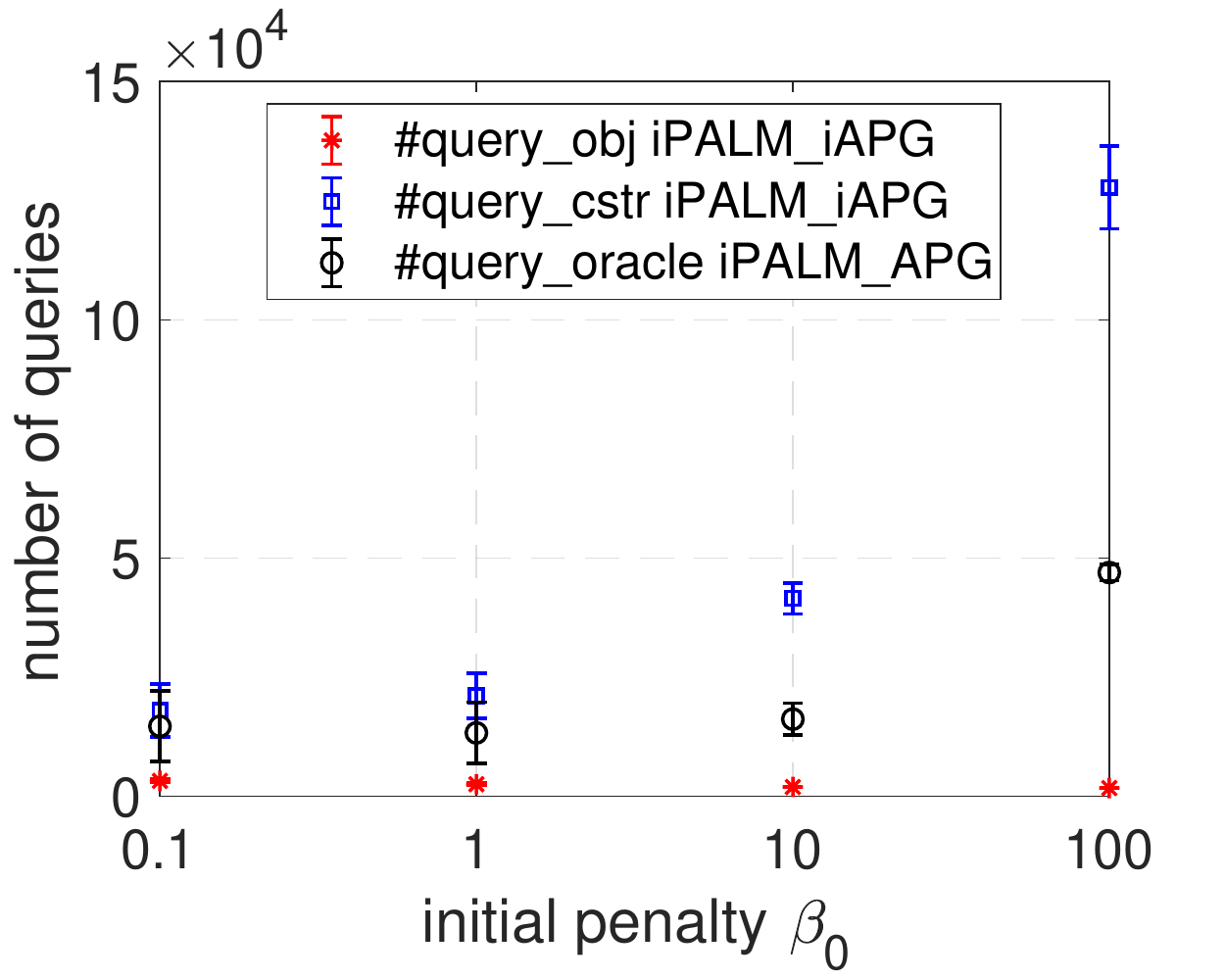}
\includegraphics[width=0.3\textwidth]{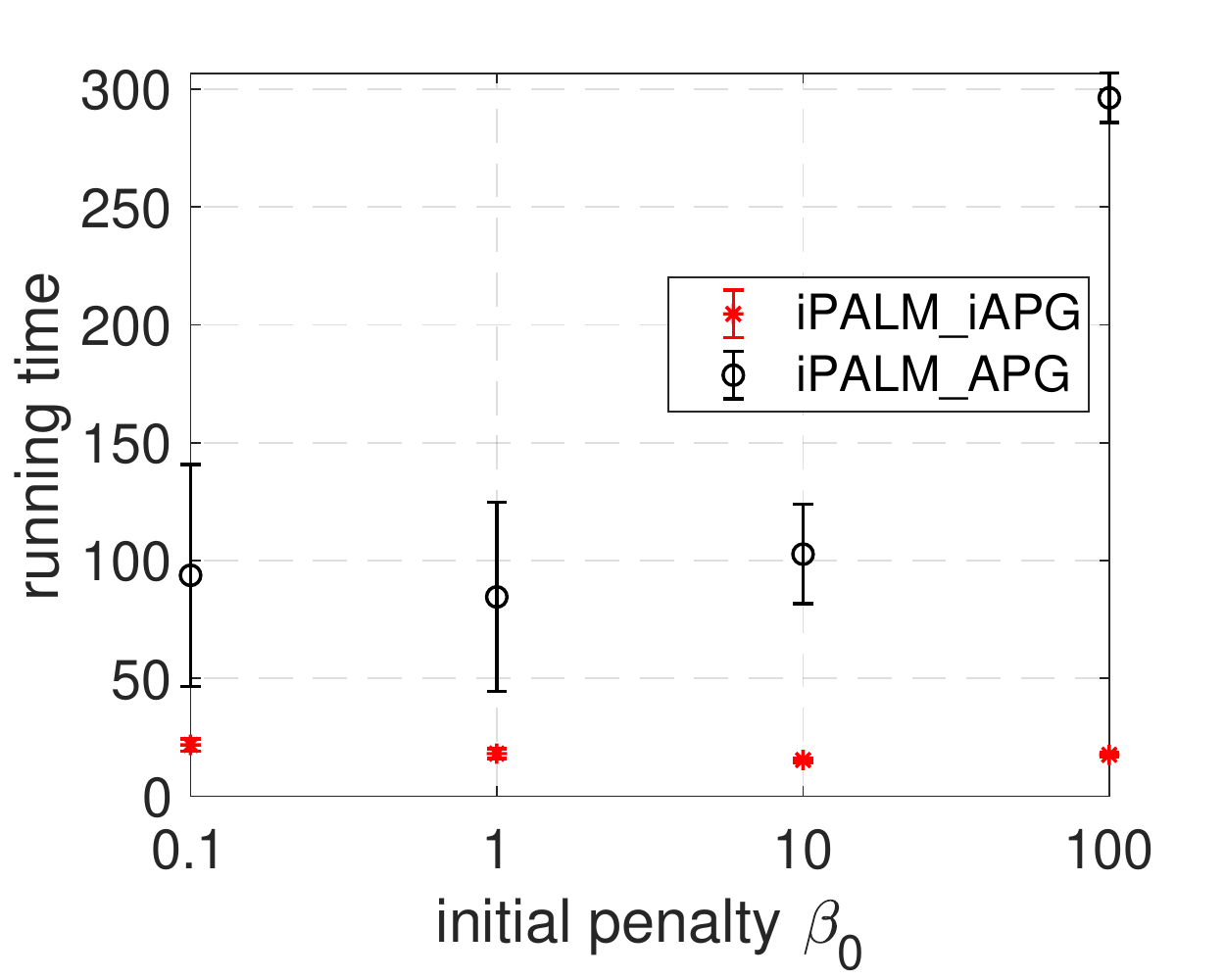}
\end{center}
\vspace{-0.3cm}
\end{figure}

\subsection{Portfolio optimization}
In this subsection, we test the proposed method iPALM\_iAPG on solving the portfolio optimization:
\vspace{-0.1cm}
\begin{equation}\label{eq:portf}
\min_\vx \textstyle \frac{1}{2}\vx^\top \vQ\vx, \st \vx \geqslant \vzero,\ \sum_{i=1}^n x_i \leqslant 1,\ \vxi^\top \vx \geqslant c,
\vspace{-0.1cm}
\end{equation}
where $\vxi$ is the vector of expected return rate of $n$ assets, $\vQ$ is the covariance matrix of the return rates, and $c$ is the imposed minimum total return. We solve instances of \eqref{eq:portf} with synthetic data and the real NASDAQ dataset that has been used in \cite{peng2016coordinate}.

For synthetic data, we set $c=0.02$ and $\vQ= \frac{\vH\vH^\top}{\|\vH\|^2} + \mu\vI$ with $\vH\in\RR^{n\times m}$ generated from the standard Gaussian distribution and $\mu\in \{0, 10^{-3}, 0.1\}$, and the entries of $\vxi$ independently follow uniform distribution on $[-1,2]$. The dimensions of $\vH$ were set to $n=2000$ and $m=1000$, and thus the objective of \eqref{eq:portf} with the generated data is $\mu$-strongly convex. For each value of $\mu$, we generated 10 independent instances. We compared to the APD method in \cite{hamedani2021primal} and the primal-dual sliding (PDS) method in \cite{lan2021graph}. The parameters of iPALM\_iAPG and APD were set to the same values as in section~\ref{sec:lasso}, and the parameters of PDS were set by following \cite[Theorem~2.2]{lan2021graph} with $R=1$. The results are reported in Table~\ref{table:portf_rand}, where \verb|cmpl| represents the amount of violation of complementarity condition in the KKT system, and all other quantities have the same meanings as those in Table~\ref{table:lasso}. From the results, we see that the proposed method iPALM\_iAPG can be significantly more efficient than APD and PDS in terms of the running time. APD with line search is less efficient than APD without line search in this test, while iPALM\_iAPG performed similarly well with and without line search. PDS required significantly fewer queries to the objective but much more queries to the constraint functions. This is because the inner loop of PDS needs to run to a theoretically-determined maximum number of iterations rather than to a computationally-checkable stopping condition.

For the NASDAQ dataset, $\vxi$ is the mean of 30-day return rates. The original covariance matrix $\vQ_0\in\RR^{2730\times 2730}$ is rank-deficient, and in \eqref{eq:portf}, we set  $\vQ=\vQ_0+\mu\vI$ with $\mu\in \{0, 10^{-3}, 0.1\}$. We also set $c=0.02$. These instances have worse condition numbers than the previous randomly generated ones. Hence, besides setting a stopping tolerance to $\vareps=10^{-6}$, we also set a maximum running time to one hour. We found that APD with line search did not work well for these instances, possibly because of the rounding error during the line search. Hence, we only reported the results of APD without line search by explicitly computing the smoothness constants and setting constant stepsizes. The results by all compared methods are shown in Table~\ref{table:portf_real}. Again, we see that the proposed method iPALM\_iAPG was significantly more efficient than APD and PDS in terms of running time. For the hardest case that corresponds to $\mu=0$, APD and PDS both failed to reach the desired accuracy within one hour. Similar to the instances with synthetic data, PDS required much more queries to the constraint functions, though its queries to the objective was significantly fewer than the proposed method.

\setlength{\tabcolsep}{2pt}

\vspace{-0.2cm}
\begin{table}[h]\caption{Results by the proposed method iPALM\_iAPG with and without line search, the accelerated primal-dual (APD) method in \cite{hamedani2021primal} with and without line search, and the primal-dual sliding (PDS) method in \cite{lan2021graph} on solving 10 instances of the portfolio optimization \eqref{eq:portf} with independently generated synthetic data. The numbers in the parentheses are the standard deviations.}\label{table:portf_rand}
\vspace{-0.5cm}
\begin{center}
\resizebox{\textwidth}{!}{
\begin{tabular}{|c||c||c|c||c|c|c||c|}
\hline
&\textbf{Method} & \textbf{\#query\_obj} & \textbf{\#query\_cstr} & \textbf{pres} & \textbf{dres} & \textbf{cmpl} & \textbf{time}\\\hline\hline
\multirow{5}{*}{\begin{sideways}$\mu= 0$\end{sideways}} 
&iPALM\_iAPG no line search & 2709(28.8) & 315480(19209.5) & 0.0e+00 & 8.2e-7(1.2e-7)  & 0.0e+00 & 7.2\\\cline{2-8}
&iPALM\_iAPG with line search & 3172(68.2) & 14984(403.7) & 0.0e+00 & 8.2e-7(1.2e-7)  & 0.0e+00 & 3.2\\\cline{2-8}
&APD no line search & \multicolumn{2}{|c||}{9229(772.6)} & 0.0e+00 & 8.2e-7(1.2e-7) & 0.0e+00 & 7.0\\\cline{2-8}
&APD with line search & \multicolumn{2}{|c||}{11736(1109.8)} & 0.0e+00 & 8.2e-7(1.2e-7) & 0.0e+00 & 10.9 \\\cline{2-8}
&PDS & 1578(514.6) & 18738280(12011557.5) & 3.5e-19(1.1e-18) & 8.1e-7(1.2e-7)  & 3.2e-27(1.0e-26) & 278.7 \\\hline\hline
\multirow{5}{*}{\begin{sideways}$\mu= 10^{-3}$\end{sideways}}
& iPALM\_iAPG no line search & 1451(20.4) & 183307(4012.1) & 4.3e-8(8.9e-9) & 6.8e-7(8.3e-8)  & 1.7e-15(6.8e-16) & 4.3\\\cline{2-8}
&iPALM\_iAPG with line search & 1782(25.5) & 50905(2211.4) & 4.2e-8(8.8e-9) & 7.0e-7(7.9e-8)  & 1.7e-15(6.7e-16) & 2.6\\\cline{2-8}
&APD no line search & \multicolumn{2}{|c||}{20931(1489.6)} & 3.4e-9(1.1e-9) & 6.9e-7(7.9e-8) & 1.6e-16(4.9e-17) & 15.1\\\cline{2-8}
&APD with line search & \multicolumn{2}{|c||}{26949(2055.4)} & 2.5e-9(9.1e-10) & 6.9e-7(7.9e-8) & 1.2e-16(4.0e-17) & 24.1\\\cline{2-8}
&PDS & 526(20.0) & 5008697(786935.0) & 1.7e-18(1.8e-18) & 6.8e-7(8.0e-8)  & 1.7e-25(6.4e-26) & 74.9\\\hline\hline
\multirow{5}{*}{\begin{sideways}$\mu= 0.1$\end{sideways}}
&iPALM\_iAPG no line search & 243(1.9) & 24951(497.0) & 1.4e-7(1.3e-8) & 6.4e-8(5.4e-9)  & 2.8e-13(2.6e-14) & 1.2\\\cline{2-8}
&iPALM\_iAPG with line search & 262(1.0) & 22420(703.6) & 1.4e-7(1.3e-8) & 6.9e-8(1.4e-9)  & 2.8e-13(2.6e-14) & 1.2\\\cline{2-8}
&APD no line search & \multicolumn{2}{|c||}{36869(2361.9)} & 5.8e-14(1.8e-14) & 1.4e-7(1.3e-8) & 1.1e-19(3.6e-20) & 26.3\\\cline{2-8}
&APD with line search & \multicolumn{2}{|c||}{59976(5195.3)} & 2.9e-14(1.2e-14) & 1.4e-7(1.3e-8) & 5.7e-20(2.4e-20) & 53.3\\\cline{2-8}
&PDS & 110(1.8) & 2216395(309914.2) & 6.9e-19(1.5e-18) & 1.3e-7(1.5e-8)  & 4.1e-24(3.5e-24) & 33.2\\\hline
\end{tabular}
}
\end{center}
\end{table}

\setlength{\tabcolsep}{3pt}

\begin{table}[h]\caption{Results by the proposed method iPALM\_iAPG with and without line search, the accelerated primal-dual (APD) method in \cite{hamedani2021primal} without line search, and the primal-dual sliding (PDS) method in \cite{lan2021graph} on solving instances of the portfolio optimization \eqref{eq:portf} with NASDAQ data.}\label{table:portf_real}
\vspace{-0.2cm}
\begin{center}
\resizebox{0.8\textwidth}{!}{\small
\begin{tabular}{|c||c||c|c||c|c|c||c|}
\hline
&\textbf{Method} & \textbf{\#query\_obj} & \textbf{\#query\_cstr} & \textbf{pres} & \textbf{dres} & \textbf{cmpl} & \textbf{time}\\\hline\hline
\multirow{4}{*}{\begin{sideways}$\mu= 0$\end{sideways}} 
&iPALM\_iAPG no line search & 112704  & 5530144  & 0.0e+00  & 4.2e-07   & 9.2e-19  & 350.6\\\cline{2-8}
&iPALM\_iAPG with line search & 37235  & 715328  & 0.0e+00  & 4.2e-07   & 0.0e+00  & 97.0\\\cline{2-8}
&APD no line search & \multicolumn{2}{|c||}{1118808}  & 0.0e+00  & 1.3e-06  & 2.2e-17  & 3603.8\\\cline{2-8}
&PDS & 54058  & 176318909  & 3.5e-18  & 1.1e-06   & 1.5e-25  & 3604.0 \\\hline\hline
\multirow{4}{*}{\begin{sideways}$\mu= 10^{-3}$\end{sideways}}
&iPALM\_iAPG no line search & 21314  & 375994  & 0.0e+00  & 2.3e-07   & 7.0e-14  & 54.1\\\cline{2-8}
&iPALM\_iAPG with line search & 48643  & 117194  & 0.0e+00  & 2.3e-07   & 7.1e-14  & 108.4\\\cline{2-8}
&APD no line search & \multicolumn{2}{|c||}{1119046}  & 0.0e+00  & 8.5e-07  & 4.9e-18  & 3603.6\\\cline{2-8}
&PDS & 6278  & 8927446  & 0.0e+00  & 2.2e-07   & 0.0e+00  & 195.5\\\hline\hline
\multirow{4}{*}{\begin{sideways}$\mu= 0.1$\end{sideways}}
&iPALM\_iAPG no line search & 3206  & 32178  & 4.4e-09  & 6.2e-08   & 4.8e-13  & 10.8\\\cline{2-8}
&iPALM\_iAPG with line search & 6601  & 16451  & 5.2e-09  & 6.2e-08   & 5.6e-13  & 17.8\\\cline{2-8}
&APD no line search & \multicolumn{2}{|c||}{1119360}  & 0.0e+00  & 9.0e-08  & 2.6e-21  & 3603.6\\\cline{2-8}
&PDS & 1404  & 29512311  & 0.0e+00  & 5.6e-08   & 0.0e+00  & 591.7\\\hline
\end{tabular}
}
\end{center}
\vspace{-0.2cm}
\end{table}

\section{Conclusions}\label{sec:conclusion}
We have presented an inexact accelerated proximal gradient (iAPG) method for solving structured composite convex optimization, which have two smooth components with significantly different computational costs. When the more costly component has a significantly smaller smoothness constant than the less costly one, the proposed iAPG can significantly reduce the overall complexity than its exact counterpart, by querying the more costly component less frequently than the less costly one. Using the iAPG method as a subroutine, we proposed gradient-based methods for solving affine-constrained composite convex optimization and for solving bilinear saddle-point structured nonsmooth convex optimization. Our methods can have significantly lower overall complexity than existing methods when the coefficient matrix (in the affine constraint or in the bilinear term) permits matrix-vector multiplications with low cost.

\bibliographystyle{abbrv}
\bibliography{ref}

\end{document}